\theoremstyle{plain}
    \newtheorem{theorem}{Theorem}[section]
    \newtheorem{lemma}[theorem]{Lemma}
    \newtheorem{proposition}[theorem]{Proposition}
    \newtheorem{corollary}[theorem]{Corollary}
\theoremstyle{definition}
    \newtheorem{definition}{Definition}[section]
    \newtheorem{remark}{Remark}[section]
\theoremstyle{remark}
    \newtheorem*{notation}{Notation}
\numberwithin{equation}{section}
\renewcommand{\l}{\left}
\renewcommand{\r}{\right}
\newcommand{\cleq}{\lesssim}
\newcommand{\cgeq}{\gtrsim}
\newcommand{\ceq}{\approx} 
\def\norm#1{\left\Vert #1 \right\Vert} 
\def\jbra#1{\left\langle #1 \right\rangle} 
\def\tbra#1#2{\left\langle #1 , #2 \right\rangle} 
\newcommand{\1}{\mathbbm{1}}
\newcommand{\C}{\mathbb{C}}
\newcommand{\R}{\mathbb{R}}
\renewcommand{\S}{\mathbb{S}}
\newcommand{\Z}{\mathbb{Z}}
\newcommand{\cD}{\mathcal{D}}
\newcommand{\cF}{\mathcal{F}}
\newcommand{\cS}{\mathcal{S}}
\newcommand{\cW}{\mathcal{W}}
\begin{document}

\title[Endpoint Strichartz est. for DW and its application]{Endpoint Strichartz estimate for the damped wave equation and its application}

\author[T.~Inui]{Takahisa Inui}
\address[T.~Inui]{
Department of Mathematics, 
Graduate School of Science, 
Osaka University, 
Toyonaka, Osaka 560-0043, Japan
}
\email{inui@math.sci.osaka-u.ac.jp}

\author[Y.~Wakasugi]{Yuta Wakasugi}
\address[Y.~Wakasugi]{
Department of Engineering for Production and Environment,
Graduate School of Science and Engineering,
Ehime University,
3 Bunkyo-cho, Matsuyama, Ehime, 790-8577, Japan
}
\email{wakasugi.yuta.vi@ehime-u.ac.jp}

\date{\today}
\keywords{damped wave equation, endpoint Strichartz estimates, unconditional uniqueness}
\subjclass[2010]{35L71; 35A02.}

\maketitle

\begin{abstract}
Recently, the Strichartz estimates for the damped wave equation was obtained by the first author \cite{Inup} except for the wave endpoint case. In the present paper, we give the Strichartz estimate in the wave endpoint case. 
We slightly modify the argument of Keel--Tao \cite{KeTa98}. 
Moreover, we apply the endpoint Strichartz estimate to the unconditional uniqueness for the energy critical nonlinear damped wave equation. This problem seems not to be solvable as the perturbation of the wave equation. 
\end{abstract}

\tableofcontents


\section{Introduction}

\subsection{Background}

We consider the damped wave equation.
\begin{align}
\label{DW}
\left\{
\begin{array}{ll}
	\partial_t^2 \phi - \Delta \phi +\partial_t \phi = 0, & (t,x) \in (0,\infty) \times \mathbb{R}^d,
	\\
	(\phi(0),\partial_t \phi(0)) = (\phi_0,\phi_1), & x\in \mathbb{R}^d,
\end{array}
\right.
\end{align}
where $d \in \mathbb{N}$, $(\phi_0,\phi_1)$ is given, and $\phi$ is an unknown complex valued function. 
Matsumura \cite{Mat76} applied the Fourier transform to \eqref{DW} and obtained the formula
\begin{align*}
	\phi(t,x)=\cD(t) (\phi_0+\phi_1) +\partial_t \cD(t) \phi_0,
\end{align*}
where $\cD(t)$ is defined by
\begin{align*}
	\cD(t):=e^{-\frac{t}{2}}\cF^{-1} L(t,\xi) \cF
\end{align*} 
with 
\begin{align*}
	L(t,\xi):=
	\left\{
	\begin{array}{ll}
	\displaystyle
	\frac{\sinh(t \sqrt{1/4-|\xi|^2})}{\sqrt{1/4-|\xi|^2}} & \text{if } |\xi| \leq 1/2,
	\\
	\ 
	\\
	\displaystyle
	\frac{\sin(t \sqrt{|\xi|^2-1/4})}{\sqrt{|\xi|^2-1/4}} & \text{if } |\xi|>1/2.
	\end{array}
	\right.
\end{align*}
Matsumura \cite{Mat76} also obtained an $L^p$-$L^q$ type estimate. After his work, many researchers showed  $L^p$-$L^q$ type estimates  (see e.g. \cite{Nis03,HoOg04,Nar05,SaWa17,IIOW19} and references therein). 

In \cite{Wat17}, Watanabe discussed the Strichartz estimates for \eqref{DW} when $d=2,3$. 
Recently, the first author \cite{Inup} obtained the following homogeneous and inhomogeneous Strichartz estimates.

\begin{proposition}[Homogeneous Strichartz estimates]
\label{prop1.1}
Let $d \geq 2$, $2 \leq r < \infty$, and $2\leq q \leq \infty$. Set $\gamma:= \max\{ d(1/2 - 1/r)-1/q, \frac{d+1}{2}(1/2-1/r)\}$. Assume 
\begin{align*}
	\frac{d}{2}\l( \frac{1}{2} - \frac{1}{r}\r) \geq \frac{1}{q},
\end{align*}
Then, we have
\begin{align*}
	\norm{\cD(t)f}_{L_{t}^{q} (I: L_{x}^{r}(\R^d))} 
	&\cleq \norm{ \jbra{\nabla}^{\gamma-1} f}_{L^2},
	\\
	\norm{\partial_t \cD(t)f}_{L_{t}^{q} (I: L_{x}^{r}(\R^d))} 
	&\cleq \norm{ \jbra{\nabla}^{\gamma} f}_{L^2},
	\\
	\norm{\partial_t^2 \cD(t)f}_{L_{t}^{q} (I: L_{x}^{r}(\R^d))} 
	&\cleq \norm{ \jbra{\nabla}^{\gamma+1} f}_{L^2}.
\end{align*}
\end{proposition}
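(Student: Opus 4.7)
The plan is to decompose $\cD(t) = \cD_L(t) + \cD_H(t)$ by a smooth Fourier cutoff at $|\xi| \sim 1$, so as to separate the dissipative low-frequency regime, where $\cD_L$ is heat-like, from the dispersive high-frequency regime, where $\cD_H$ is a damped half-wave propagator. Picking $\chi \in C_c^\infty(\R^d)$ with $\chi \equiv 1$ on $\{|\xi| \leq 1\}$ and $\supp\chi \subset \{|\xi| \leq 2\}$, I set $\cD_L(t) := e^{-t/2}\cF^{-1}\chi(\xi) L(t,\xi)\cF$ and $\cD_H := \cD - \cD_L$; the full symbol $L(t,\xi)$ is smooth in $\xi$ (the two formulas glue analytically across $|\xi|=1/2$ since $\sinh(x)/x$ and $\sin(x)/x$ have the same analytic continuation), so no extra care is needed near the transition. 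The two terms inside the max defining $\gamma$ will arise on the high-frequency side from two different mechanisms.

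On $\supp\chi$ the symbol of $\cD_L(t)$ is smooth and pointwise dominated by $e^{-ct|\xi|^2}$ for some $c>0$. Standard heat $L^2 \to L^r$ smoothing combined with the hypothesis $\frac{d}{2}(1/2-1/r) \geq 1/q$ (which controls $(1+t)^{-d(1/2-1/r)/2}$ in $L^q_t(I)$, with any borderline logarithmic loss absorbed by the compact frequency support) gives $\|\cD_L(t) f\|_{L^q_t(I;L^r_x)} \cleq \|f\|_{L^2}$, and this is dominated by $\|\jbra{\nabla}^{\gamma-1} f\|_{L^2}$ since the low-frequency support $|\xi|\leq 2$ absorbs any non-negative Sobolev power for free. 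Each application of $\partial_t$ on $\supp\chi$ produces only bounded factors (as $\cosh$ and $\sqrt{1/4-|\xi|^2}\sinh$ remain bounded there), so $\partial_t\cD_L$ and $\partial_t^2\cD_L$ satisfy the same estimate.

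On $\{|\xi|\geq 1\}$ one has $\omega(\xi) := \sqrt{|\xi|^2-1/4} \sim \jbra{\xi}$, so $\cD_H(t)$ is, up to a zeroth-order smooth Fourier multiplier supported at high frequencies, equal to $e^{-t/2}\sin(t\omega(D))/\omega(D)$, i.e.\ a damped half-wave propagator. When $(q,r)$ satisfies the sharp wave-admissibility condition $\frac{d-1}{2}(1/2-1/r) \geq 1/q$, the Keel--Tao Strichartz estimates \cite{KeTa98} apply in the full non-endpoint range $r<\infty$ considered here and give the sharp scaling loss $d(1/2-1/r)-1/q$, matching the first term of $\gamma$; the factor $e^{-t/2}$ can only improve matters. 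When $(q,r)$ falls below the admissibility line, sharp Strichartz is unavailable, and I would instead combine the $L^{r'} \to L^r$ dispersive decay of the wave propagator with the time integrability supplied by $e^{-t/2}$, via a $TT^*$-type argument on the bilinear kernel $e^{-(t+s)/2}\sin(t\omega(D))\sin(s\omega(D))/\omega(D)^2$, to obtain the loss $\frac{d+1}{2}(1/2-1/r)$, matching the second term. On $\{|\xi|\geq 1\}$, each $\partial_t$ produces a factor comparable to $\omega(D)\sim\jbra{\nabla}$, which accounts for the shift $\gamma \to \gamma+1$ and $\gamma+1\to\gamma+2$ in the remaining inequalities.

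The main technical obstacle is the below-admissibility case: one has to verify that the exponential factor $e^{-t/2}$ can be inserted cleanly into the $TT^*$ Keel--Tao framework without disturbing the bilinear interpolation, and that the resulting loss is exactly $\frac{d+1}{2}(1/2-1/r)$ and no worse. Once this is settled, summing the low- and high-frequency pieces and using that $\jbra{\nabla}^{\gamma-1}$ controls the $L^2$ norm on the low-frequency support gives the stated bounds for $\cD(t)$, $\partial_t\cD(t)$, and $\partial_t^2\cD(t)$.
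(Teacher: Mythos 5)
Note first that Proposition \ref{prop1.1} is not proved in the present paper: it is quoted from \cite{Inup}, and here the authors only describe its proof (low frequencies behave like the heat flow and produce the hypothesis $\frac d2(\frac12-\frac1r)\ge\frac1q$; high frequencies reduce to the half-wave propagator damped by $e^{-t/2}$, treated via the dispersive estimate, a $TT^*$ argument, and Keel--Tao). Your proposal reproduces exactly this architecture: the cutoff at $|\xi|\sim 1$, the heat-like treatment of $\cD_L$, and the two-regime analysis of the damped half-wave piece --- Keel--Tao on and above the wave-admissibility line $\frac{d-1}{2}(\frac12-\frac1r)\ge\frac1q$ (where $\gamma=d(\frac12-\frac1r)-\frac1q$ dominates), and dispersive decay combined with the integrability supplied by $e^{-(t+s)/2}\le e^{-|t-s|/2}$ below it (where $\gamma=\frac{d+1}{2}(\frac12-\frac1r)$ dominates). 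The $TT^*$ kernel bound $e^{-|t-s|/2}(1+|t-s|N)^{-(d-1)(1/2-1/r)}N^{2d(1/2-1/r)}$ per dyadic block indeed yields $N^{2\gamma}$ after Young/HLS in time, which is precisely the mechanism of Lemma \ref{lem_bilin_est_nonadm} in the inhomogeneous setting; working directly with $e^{it\sqrt{-\Delta-1/4}}$ via the stationary-phase bound of Section \ref{sec2.1} rather than via the Mikhlin--H\"ormander reduction to $e^{it|\nabla|}$ is an immaterial variation.

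The one step whose justification is wrong as written is the borderline case $\frac d2(\frac12-\frac1r)=\frac1q$ of the low-frequency estimate. There the pointwise bound $\|\cD_L(t)f\|_{L^r}\lesssim \langle t\rangle^{-\frac d2(\frac12-\frac1r)}\|f\|_{L^2}$ just fails to be $q$-integrable at infinity, and compact frequency support does not repair this: the cutoff improves nothing for large $t$, since the Gaussian is already concentrated at low frequencies, so there is no ``logarithmic loss absorbed by the support'' --- the naive argument diverges. What is needed is the genuine frequency-localized heat Strichartz estimate at the sharp admissible pair, obtained by $TT^*$ together with the Hardy--Littlewood--Sobolev inequality (this is how the analogous equality case is handled in \cite[Lemma 2.3]{Inup}, quoted here as Lemma \ref{lem_str_est_low}). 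With that standard replacement, your argument is complete.
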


\begin{proposition}[Inhomogeneous Strichartz estimates]
\label{prop1.2}
Let $d\geq2$, $2\leq r,\tilde{r} < \infty$, and $2\leq q, \tilde{q} \leq \infty$. We set $\gamma:= \max \{ d(1/2-1/r)-1/q, \frac{d+1}{2}(1/2-1/r) \}$ and $\tilde{\gamma}$ in the same manner. Assume that $(q,r)$ and $(\tilde{q},\tilde{r})$ satisfies 
\begin{align*}
	\frac{d}{2}\l( \frac{1}{2} - \frac{1}{r}\r) +\frac{d}{2}\l( \frac{1}{2} - \frac{1}{\tilde{r}}\r)
	>\frac{1}{q} + \frac{1}{\tilde{q}},
\end{align*}
\begin{align*}
	\frac{d}{2} \l( \frac{1}{2} - \frac{1}{r}\r) + \frac{d}{2} \l( \frac{1}{2} - \frac{1}{\tilde{r}}\r) 
	= \frac{1}{q} + \frac{1}{\tilde{q}}
	\text{ and }
	1< \tilde{q}' < q<\infty,
\end{align*}
or 
\begin{align*}
	(q,r)=(\tilde{q},\tilde{r})=(\infty,2).
\end{align*}
Moreover, we exclude the endpoint case, that is, we assume $(q,r) \neq (2,2(d-1)/(d-3)))$ and $(\tilde{q},\tilde{r}) \neq (2,2(d-1)/(d-3)))$ when $d \geq 4$. 
Then, we have
\begin{align*}
	\norm{\int_{0}^{t} \cD(t-s) F(s) ds}_{L_{t}^{q} (I: L_{x}^{r}(\R^d))}
	&\cleq \norm{ \jbra{\nabla}^{\gamma+\tilde{\gamma}+\delta-1} F}_{L_{t}^{\tilde{q}'} (I: L_{x}^{\tilde{r}'}(\R^d))},
	\\
	\norm{\int_{0}^{t}  \partial_t \cD(t-s) F(s) ds}_{L_{t}^{q} (I: L_{x}^{r}(\R^d))}
	&\cleq \norm{ \jbra{\nabla}^{\gamma+\tilde{\gamma}+\delta} F}_{L_{t}^{\tilde{q}'} (I: L_{x}^{\tilde{r}'}(\R^d))},
\end{align*}
where $\delta = 0$ when $\frac{1}{\tilde{q}}(1/2-1/r)=\frac{1}{q}(1/2-1/\tilde{r})$ and in the other cases $\delta \geq 0$ is defined in the table 1 below. 
\begin{table}[htb]
\begingroup
\renewcommand{\arraystretch}{1.6}
\begin{tabular}{|c|c|c|} 
	\hline
	$\delta$
		&  $\frac{1}{\tilde{q}} \l( \frac{1}{2}- \frac{1}{r}\r) < \frac{1}{q} \l( \frac{1}{2} - \frac{1}{\tilde{r}} \r) $
		&  $\frac{1}{\tilde{q}} \l( \frac{1}{2}- \frac{1}{r}\r) > \frac{1}{q} \l( \frac{1}{2} - \frac{1}{\tilde{r}} \r)$ 
	\\[4pt]
	\hline \hline
	$\frac{d-1}{2}\l( \frac{1}{2}- \frac{1}{r}\r) \geq \frac{1}{q}$ 
	& $0$
	& $0$
	\\
	$\frac{d-1}{2}\l( \frac{1}{2}- \frac{1}{\tilde{r}}\r) \geq \frac{1}{\tilde{q}}$
	&  
	& 
	\\[5pt]
	\hline
	$\frac{d-1}{2}\l( \frac{1}{2}- \frac{1}{r}\r) \geq \frac{1}{q}$ 
	& $\times$
	& $\frac{\tilde{q}}{q}  \l\{ \frac{1}{\tilde{q}}-\frac{d-1}{2} \l( \frac{1}{2} - \frac{1}{\tilde{r}}\r) \r\}$
	\\
	$\frac{d-1}{2}\l( \frac{1}{2}- \frac{1}{\tilde{r}}\r) < \frac{1}{\tilde{q}}$
	&  
	& 
	\\[5pt]
	\hline
	$\frac{d-1}{2}\l( \frac{1}{2}- \frac{1}{r}\r) < \frac{1}{q}$ 
	& $\frac{q}{\tilde{q}}  \l\{ \frac{1}{q}-\frac{d-1}{2} \l( \frac{1}{2} - \frac{1}{r}\r) \r\}$
	& $\times$
	\\
	$\frac{d-1}{2}\l( \frac{1}{2}- \frac{1}{\tilde{r}}\r) \geq \frac{1}{\tilde{q}}$
	&  
	& 
	\\[5pt]
	\hline
	$\frac{d-1}{2}\l( \frac{1}{2}- \frac{1}{r}\r) < \frac{1}{q}$ 
	& $\frac{1}{\tilde{q}} \frac{d-1}{2} \l\{ \tilde{q}\l( \frac{1}{2}- \frac{1}{\tilde{r}}\r) - q\l( \frac{1}{2} - \frac{1}{r}\r) \r\}$
	& $\frac{1}{q} \frac{d-1}{2} \l\{ q\l( \frac{1}{2}- \frac{1}{r}\r) - \tilde{q}\l( \frac{1}{2} - \frac{1}{\tilde{r}}\r) \r\}$
	\\
	$\frac{d-1}{2}\l( \frac{1}{2}- \frac{1}{\tilde{r}}\r)< \frac{1}{\tilde{q}}$
	&
	&
	\\
	\hline
\end{tabular}
\caption{The value of $\delta$. ($\times$ means that the case does not occur.)} \label{tab1}
\endgroup
\end{table}
\end{proposition}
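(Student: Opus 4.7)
The plan is to frequency-decompose $\cD(t)=\cD_L(t)+\cD_H(t)$ via a smooth cutoff at $|\xi|=1/2$, since the symbol $L(t,\xi)$ switches from hyperbolic to oscillatory behavior precisely there. Each piece will be treated by a different method, and the inhomogeneous estimate will then be assembled via a $TT^*$ argument together with the Christ--Kiselev lemma, which accounts for the exclusion of endpoint pairs.

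For the low-frequency piece $\cD_L(t)$, I would exploit the factor $e^{-t/2}$ together with the boundedness of the hyperbolic symbol to obtain heat-type pointwise kernel bounds $\snorm{\cD_L(t)f}_{L^r}\cleq (1+t)^{-\frac{d}{2}(1/\tilde r'-1/r)}\snorm{f}_{L^{\tilde r'}}$ in the style of Matsumura. Inserting these into the Duhamel formula and applying Young (in the strict inequality case) or Hardy--Littlewood--Sobolev in time (in the sharp equality case, with $1<\tilde q'<q<\infty$ being precisely what is needed to run HLS) produces the inhomogeneous estimate for $\cD_L$ with no derivative loss. For the high-frequency piece $\cD_H(t)$, which is a perturbation of the half-wave propagator $\sin(t\sqrt{-\Delta-1/4})/\sqrt{-\Delta-1/4}$ multiplied by $e^{-t/2}$, I would apply the classical wave Strichartz estimates of Keel--Tao on dyadic annuli $|\xi|\sim 2^k$ with $k\geq 0$, picking up the derivative loss $\gamma$ dictated by the wave scaling, and then sum the dyadic pieces against the exponential factor to ensure absolute convergence.

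The inhomogeneous estimate for $\cD_H$ follows from $TT^*$ directly when the two admissible pairs share the same wave scaling, \ie when $\tfrac{1}{\tilde q}(1/2-1/r)=\tfrac{1}{q}(1/2-1/\tilde r)$. Otherwise one must bridge two distinct admissible scalings, which is carried out by inserting Sobolev embedding in space; the derivative cost of this embedding is exactly the $\delta$ in Table~\ref{tab1}. The four rows distinguish whether each of the pairs $(q,r)$, $(\tilde q,\tilde r)$ lies on the sharp wave-admissible line $\tfrac{d-1}{2}(1/2-1/r)=1/q$ or strictly above it, and the two columns correspond to the sign of $\tfrac{1}{\tilde q}(1/2-1/r)-\tfrac{1}{q}(1/2-1/\tilde r)$, which dictates whether the embedding is applied to the source pair or to the target pair. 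The wave endpoint $(q,r)=(2,2(d-1)/(d-3))$ is excluded because Christ--Kiselev breaks down there and the bilinear dyadic argument of Keel--Tao that handles the homogeneous endpoint does not transfer to the inhomogeneous side without the scaling identity required by $TT^*$.

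The main technical obstacle I foresee is ensuring absolute convergence of the dyadic sums in the off-diagonal (distinct scaling) cases: a naive interpolation across different wave-admissible scalings produces logarithmic losses which must be absorbed by the exponential damping $e^{-t/2}$. Careful tracking of the exponents in each of the four boxes of Table~\ref{tab1}, and verifying that the $\delta$-loss is the sharp one arising from Sobolev embedding in every case, will be the most delicate bookkeeping in the argument; by contrast, the low-frequency part and the on-diagonal high-frequency part are essentially direct consequences of the Matsumura decay estimates and of Keel--Tao.
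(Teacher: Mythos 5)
Your overall architecture coincides with the paper's: the frequency split at $|\xi|\sim 1$, Matsumura-type heat bounds plus Young or Hardy--Littlewood--Sobolev in time for the low-frequency part (this is exactly Lemma \ref{lem_str_est_low}), and a dispersive treatment of the high-frequency part producing the loss $\gamma+\tilde{\gamma}+\delta$. Using Christ--Kiselev to pass from the untruncated to the retarded operator is a legitimate variant of the paper's direct estimate of the retarded bilinear form $T(F,G)$, and is consistent with the hypothesis $1<\tilde{q}'<q<\infty$.

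There is, however, a genuine gap in your high-frequency step. You propose to invoke the classical Keel--Tao wave Strichartz estimates on dyadic annuli and then ``sum the dyadic pieces against the exponential factor.'' Keel--Tao applies only to wave-admissible pairs, i.e.\ those with $\frac{d-1}{2}(1/2-1/r)\geq 1/q$; but Proposition \ref{prop1.2} is asserted for all $2\leq r<\infty$, $2\leq q\leq\infty$ subject only to the heat-type conditions, and the last two rows of Table \ref{tab1} explicitly concern pairs with $\frac{d-1}{2}(1/2-1/r)<1/q$, which are \emph{not} wave-admissible and for which no frequency-localized wave Strichartz estimate exists. For these pairs the estimate is rescued not by the dyadic sum in the frequency variable but by the exponential decay inside the time convolution: one keeps the kernel $e^{-(t-s)/2}\{1+(t-s)N\}^{-\frac{(d-1)(r-2)}{2r}}$ intact (Lemma \ref{lem2.3}, built on Corollary \ref{cor2.2}) and applies Young's inequality in $t$, the kernel being integrable in time thanks to $e^{-(t-s)/2}$ even when the polynomial dispersive decay alone is insufficient; this is how Lemma \ref{lem_bilin_est_nonadm} here and Lemmas 2.6--2.10 of \cite{Inup} proceed, and it is also where the specific values of $\delta$ in Table \ref{tab1} arise (H\"older/interpolation in the \emph{time} variable between the two scalings, not a spatial Sobolev embedding, whose cost would not reproduce the tabulated expressions). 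As written, your argument establishes the proposition only for wave-admissible pairs and does not derive the table.
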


Roughly speaking, the Strichartz estimates of the damped wave equation is a combination of those of heat and wave. Indeed, the assumption of the exponents $(q,r)$ and $(\tilde{q},\tilde{r})$ comes from the low frequency part of the solution map $\cD(t)$, which behaves like the heat equation. On the other hand, the derivative losses
$\gamma$ and $\tilde{\gamma}$ come from the high frequency part, which behaves like the wave equation with exponential time decay. See the previous work \cite{Inup} for more detail.

It is worth remarking that we showed the homogeneous Strichartz estimate holds in the wave endpoint case, i.e., $(q,r)=(2,2(d-1)/(d-3)))$ when $d \geq4$. 
Its proof is based on the fact that the high frequency part of the homogeneous term can be reduced to
the wave solution map by the Mikhlin--H\"{o}rmander multiplier theorem and the endpoint estimate of the wave solution is obtained by Keel--Tao \cite{KeTa98}. On the other hand, we remove the wave endpoint case in the inhomogeneous Strichartz estimates (see Proposition \ref{prop1.2}). We can not use the Mihlin--H\"{o}rmander multiplier theorem to show the inhomogeneous Strichartz estimates since it has a time integral. That is why the wave endpoint inhomogeneous Strichartz estimates has not been known. 

In the present paper, we prove the wave endpoint inhomogeneous Strichartz estimates and we apply it to a nonlinear problem, especially unconditional uniqueness for the energy critical nonlinear damped wave equation.

\subsection{Main results}

First, we give the endpoint Strichartz estimate. 

\begin{theorem}[Endpoint Strichartz estimate]
\label{thm1.3}
Let $d \geq 4$. 
Then, we have
\begin{align}
\label{eq0.1}
	\norm{\int_{0}^{t} \cD(t-s) F(s) ds }_{L_{t}^{2} (I: L_{x}^{\frac{2(d-1)}{d-3}}(\R^d))}
	&\cleq \norm{ \jbra{\nabla}^{\frac{2}{d-1}} F}_{L_{t}^{2} (I: L_{x}^{\frac{2(d-1)}{d+1}}(\R^d))},
	\\
\notag
	\norm{ \int_{0}^{t}  \partial_t \cD(t-s) F(s) ds }_{L_{t}^{2} (I: L_{x}^{\frac{2(d-1)}{d-3}}(\R^d))}
	&\cleq \norm{ \jbra{\nabla}^{\frac{d+1}{d-1}} F}_{L_{t}^{2} (I: L_{x}^{\frac{2(d-1)}{d+1}}(\R^d))}.
\end{align}
\end{theorem}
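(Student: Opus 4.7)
The plan is to split the solution operator by frequency and reduce the problem to an adaptation of the Keel--Tao endpoint argument on the high-frequency piece, since the low-frequency piece is already of heat type. Let $\chi_L(\xi)+\chi_H(\xi)=1$ be a smooth partition of unity with $\chi_H$ supported in $\{|\xi|\geq 1/2\}$, and set $\cD=\cD_L+\cD_H$ accordingly. On the support of $\chi_L$ the multiplier $L(\tau,\xi)$ is smooth and $e^{-\tau/2}L(\tau,\xi)$ enjoys heat-type decay; a direct computation yields a pointwise kernel bound $\norm{\cD_L(\tau)f}_{L^r_x}\lesssim (1+\tau)^{-d/(d-1)}\norm{f}_{L^{r'}_x}$ at $r=\frac{2(d-1)}{d-3}$. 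Since $d/(d-1)>1$ this kernel lies in $L^1_\tau$, and Minkowski together with Young's inequality in the time variable give the desired inhomogeneous endpoint estimate for $\cD_L$ with no derivative loss.

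For $\cD_H$, I would write $\sin(\tau\omega)/\omega$ with $\omega:=\sqrt{|\nabla|^2-1/4}$ as a superposition of damped half-waves $U_\pm(\tau):=e^{-\tau/2}e^{\pm i\tau\omega}\chi_H$, so that $\cD_H(\tau)=(2i\omega)^{-1}(U_+(\tau)-U_-(\tau))$. On the high-frequency region, $\omega\sim \jbra{\nabla}$, and one has the energy bound $\norm{U_\pm(\tau)f}_{L^2_x}\leq e^{-\tau/2}\norm{f}_{L^2_x}$ and the half-wave dispersive bound $\norm{U_\pm(\tau)f}_{L^\infty_x}\lesssim e^{-\tau/2}(1+\tau)^{-(d-1)/2}\norm{\jbra{\nabla}^{(d-1)/2}f}_{L^1_x}$ inherited from the standard wave/Klein--Gordon dispersive estimate. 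Absorbing the factor $(2\omega)^{-1}\sim\jbra{\nabla}^{-1}$, these give energy/dispersive bounds for $\cD_H(\tau)$ itself with derivative losses $-1$ and $(d-3)/2$ respectively. Feeding them into Keel--Tao's bilinear real-interpolation machinery (dyadic decomposition $|t-s|\sim 2^j$, slabwise interpolation at the endpoint parameter $\theta=(d-3)/(d-1)$, and summation in $j$) produces the $L^2_tL^{r'}_x\to L^2_tL^r_x$ bound with derivative loss $\theta\cdot(-1)+(1-\theta)\cdot(d-3)/2 = 2/(d-1)$, which is exactly the claimed loss $\jbra{\nabla}^{2/(d-1)}$. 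The summation in $j$ converges because the extra factor $e^{-2^j/2}$ supplied by the damping dominates any polynomial growth, so the endpoint does not degenerate as it does in the pure-wave case.

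The second estimate follows from the identity $\partial_t\cD_H(\tau)=e^{-\tau/2}[\cos(\tau\omega)-(2\omega)^{-1}\sin(\tau\omega)]\chi_H$: the leading term replaces the $\omega^{-1}$ prefactor by a bounded symbol, so the energy bound loses one fewer derivative and the dispersive one gains one more, shifting the resulting loss by $+1$ to $1+2/(d-1)=(d+1)/(d-1)$. The main obstacle is that Keel--Tao's framework is usually stated for unitary groups obeying $U(t)U(s)^*=U(t-s)$, whereas here $U_\pm(t)U_\pm(s)^*=e^{-(t+s)/2}e^{\pm i(t-s)\omega}\neq U_\pm(t-s)$; however, since $e^{-(t+s)/2}\leq e^{-|t-s|/2}$ the mismatch is entirely favorable, and the bilinear interpolation depends only on scale-by-scale energy and dispersive bounds for the actual kernel $\cD_H(t-s)$, so the ``slight modification'' is just to run the argument with $\cD_H(t-s)$ in place of $U(t)U(s)^*$ and to track the additional decay. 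The careful bookkeeping of derivative losses through the interpolation, and the verification of the dispersive bound for $e^{\pm i\tau\omega}\chi_H$ uniformly in $\tau$, are the only technical points that require attention.
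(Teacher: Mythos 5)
Your architecture -- low/high frequency splitting, a heat-type bound for the low frequencies, and a Keel--Tao bilinear argument for the damped half-waves $e^{-\tau/2}e^{\pm i\tau\omega}$ run with $\cD_H(t-s)$ in place of $U(t)U(s)^*$ -- is exactly the route the paper takes, and the low-frequency and $\partial_t\cD$ parts of your sketch are fine. However, two quantitative steps are wrong, and they sit precisely at the ``careful bookkeeping'' you defer to the end.

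First, the dispersive estimate. The half-wave dispersive bound costs $(d+1)/2$ derivatives, not $(d-1)/2$: frequency-localized, one has $\| e^{i\tau\omega}P_N f\|_{L^\infty}\lesssim N^d(1+\tau N)^{-\frac{d-1}{2}}\| P_N f\|_{L^1}$, which for $\tau\gtrsim N^{-1}$ reads $N^{\frac{d+1}{2}}\tau^{-\frac{d-1}{2}}\| P_Nf\|_{L^1}$ (this is the paper's Lemma 2.2 and Corollary \ref{cor2.2}). Hence $\cD_H$ itself has dispersive loss $(d-1)/2$ after absorbing $\omega^{-1}$, not $(d-3)/2$. Moreover, with your stated losses your interpolation formula $\theta\cdot(-1)+(1-\theta)\cdot\frac{d-3}{2}$ at $\theta=\frac{d-3}{d-1}$ evaluates to $0$, not to $\frac{2}{d-1}$; a loss of $0$ derivatives at high frequency is impossible (the Knapp example for the wave propagator survives the damping on time scales $|t-s|\lesssim 1$ and forces the loss $2\gamma-1=\frac{2}{d-1}$). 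With the corrected dispersive loss the same formula gives $-\frac{d-3}{d-1}+\frac{2}{d-1}\cdot\frac{d-1}{2}=\frac{2}{d-1}$. So your final exponent is right only because two errors cancel.

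Second, the summability of the dyadic pieces. The factor $e^{-2^j/2}$ helps only for $j\geq 0$. At the endpoint the per-scale bound carries $2^{-j\beta(r^*,r^*)}$ with $\beta(r^*,r^*)=0$, so for $j<0$ (time separations $|t-s|\sim 2^j\leq 1$, where $e^{-2^j/2}\sim 1$) the pieces $T_j$ are uniformly of size $O(1)$ and the sum over $j\to-\infty$ diverges absolutely. This is exactly the degeneration present in the undamped endpoint case, and the damping does not remove it: one still needs the genuine Keel--Tao device of proving $|T_j(F,G)|\lesssim 2^{-j\beta(a,b)}\|F\|_{L^2_tL^{a'}}\|G\|_{L^2_tL^{b'}}$ for $(1/a,1/b)$ in a neighborhood of $(1/r^*,1/r^*)$ and then summing by bilinear real interpolation (the paper's Lemma \ref{lem2.5} together with the interpolation lemma of \cite{BeLo76,KeTa98}). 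Where the exponential damping genuinely pays off -- and this is the point the paper emphasizes -- is elsewhere: it makes the kernel $e^{-\tau/2}(1+\tau N)^{-\cdots}$ integrable in $\tau$, so the non-endpoint bilinear estimates hold for all $q,r\geq 2$ by Young's inequality, without the usual admissibility constraints.
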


The low frequency part of the inhomogeneous term can be treated easily since it behaves like the heat equation. We need to treat the high frequency part more carefully, which corresponds to the wave equation with exponential decay term related to time. We apply the argument of Keel--Tao \cite{KeTa98} to the high frequency part.  We need a small modification, since we  estimate $\int_{0}^{t} e^{-(t-s)/2} e^{i(t-s)|\nabla|} F(s)ds$. Due to its exponential decay, we get the bilinear Strichartz estimate for all $q,r \geq 2$. That is why we give a sketch of the proof for the reader's convenience. 

\begin{remark}
\label{rmk1.1.1}
We can easily show the following wave endpoint Strichartz estimates with additional derivative loss. 
\begin{align*}
	\norm{\int_{0}^{t} \cD(t-s) F(s) ds }_{L_{t}^{2} (I: L_{x}^{\frac{2(d-1)}{d-3}}(\R^d))}
	&\cleq \norm{ \jbra{\nabla}^{\frac{2}{d-1}+\varepsilon} F}_{L_{t}^{2} (I: L_{x}^{\frac{2(d-1)}{d+1}}(\R^d))},
\end{align*}
 for any $\varepsilon>0$. Indeed, modify the proof of Lemma 2.6 in \cite{Inup} as $(1+|t-s|N)^{-\frac{d-1}{2} \l( 1-\frac{2}{r}\r) +\varepsilon}$ instead of $(1+|t-s|N)^{-\frac{d-1}{2} \l( 1-\frac{2}{r}\r)}$. One of our contribution in this paper is taking $\varepsilon=0$. 
\end{remark}

Combining Theorem \ref{thm1.3} and the argument to prove Proposition \ref{prop1.2}, we obtain the following general Strichartz estimates containing the wave endpoint case. 

\begin{proposition}
\label{prop1.4}
Let $d\geq2$, $2\leq r,\tilde{r} < \infty$, and $2\leq q, \tilde{q} \leq \infty$. We set $\gamma:= \max \{ d(1/2-1/r)-1/q, \frac{d+1}{2}(1/2-1/r) \}$ and $\tilde{\gamma}$ in the same manner. Assume that $(q,r)$ and $(\tilde{q},\tilde{r})$ satisfies 
\begin{align*}
	\frac{d}{2}\l( \frac{1}{2} - \frac{1}{r}\r) +\frac{d}{2}\l( \frac{1}{2} - \frac{1}{\tilde{r}}\r)
	>\frac{1}{q} + \frac{1}{\tilde{q}},
\end{align*}
\begin{align*}
	\frac{d}{2} \l( \frac{1}{2} - \frac{1}{r}\r) + \frac{d}{2} \l( \frac{1}{2} - \frac{1}{\tilde{r}}\r) 
	= \frac{1}{q} + \frac{1}{\tilde{q}}
	\text{ and }
	1< \tilde{q}' < q<\infty,
\end{align*}
or 
\begin{align*}
	(q,r)=(\tilde{q},\tilde{r})=(\infty,2).
\end{align*}
(We may take $(q,r) = (2,2(d-1)/(d-3))$ or  $(\tilde{q},\tilde{r}) = (2,2(d-1)/(d-3))$ when $d \geq 4$. )
Then, we have
\begin{align*}
	\norm{ \int_{0}^{t} \cD(t-s) F(s) ds }_{L_{t}^{q} (I: L_{x}^{r}(\R^d))}
	&\cleq \norm{ \jbra{\nabla}^{\gamma+\tilde{\gamma}+\delta-1} F }_{L_{t}^{\tilde{q}'} (I: L_{x}^{\tilde{r}'}(\R^d))},
	\\
	\norm{\int_{0}^{t}  \partial_t \cD(t-s) F(s) ds }_{L_{t}^{q} (I: L_{x}^{r}(\R^d))}
	&\cleq \norm{ \jbra{\nabla}^{\gamma+\tilde{\gamma}+\delta} F}_{L_{t}^{\tilde{q}'} (I: L_{x}^{\tilde{r}'}(\R^d))},
\end{align*}
where $\delta = 0$ when $\frac{1}{\tilde{q}}(1/2-1/r)=\frac{1}{q}(1/2-1/\tilde{r})$ and in the other cases $\delta \geq 0$ is defined in the table 1 above.
\end{proposition}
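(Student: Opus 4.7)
The plan is to revisit the proof of Proposition \ref{prop1.2} in \cite{Inup} and substitute Theorem \ref{thm1.3} into the one step where the wave endpoint was previously excluded. Concretely, one decomposes the solution operator by Littlewood--Paley projection into a low-frequency piece $\cD_{\mathrm{low}}(t)$, which behaves like a heat semigroup up to harmless factors, and a high-frequency piece $\cD_{\mathrm{high}}(t)$, which essentially equals $e^{-t/2}$ times a half-wave propagator. The admissibility condition
\begin{align*}
    \frac{d}{2}\l(\frac{1}{2}-\frac{1}{r}\r) + \frac{d}{2}\l(\frac{1}{2}-\frac{1}{\tilde r}\r) \geq \frac{1}{q}+\frac{1}{\tilde q}
\end{align*}
and the resulting off-scaling loss $\delta$ come entirely from the low-frequency part, while the derivative losses $\gamma,\tilde\gamma$ and the wave endpoint issue come from the high-frequency part. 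The low-frequency piece is treated exactly as in \cite{Inup}; no wave endpoint question arises there, because the heat-type bounds hold for every admissible pair.

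For the high-frequency piece I would proceed in two stages. First, the diagonal wave endpoint case $(q,r)=(\tilde q,\tilde r)=(2,2(d-1)/(d-3))$ is furnished directly by Theorem \ref{thm1.3}, which handles the full propagator and therefore in particular the high-frequency part. Second, the off-diagonal endpoint cases, in which exactly one of the pairs equals $(2,2(d-1)/(d-3))$, are obtained by bilinear real interpolation between Theorem \ref{thm1.3} and the non-endpoint retarded estimates of Proposition \ref{prop1.2}, fixing the endpoint pair on one side and varying the other admissible pair on the opposite side. This is the same interpolation scheme that produces the table of values of $\delta$ in \cite{Inup}: the scaling balance between $(q,r)$ and $(\tilde q,\tilde r)$ is unchanged, so the formulas in Table \ref{tab1} are inherited without modification.

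The estimates for $\partial_t \cD(t-s)$ follow from the ones for $\cD(t-s)$ by noting that on the low-frequency part $\partial_t \cD$ gains a bounded factor, while on the high-frequency part $\partial_t \cD$ differs from $|\nabla|\cD$ by an $L^2$-bounded multiplier, accounting for the extra derivative in $\gamma+\tilde\gamma+\delta$ versus $\gamma+\tilde\gamma+\delta-1$. This mirrors the corresponding step in the proof of Proposition \ref{prop1.2}.

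The main technical obstacle is making the interpolation in the second step genuinely preserve the endpoint, rather than producing an $\varepsilon$-loss of derivatives of the type described in Remark \ref{rmk1.1.1}. This is precisely where Theorem \ref{thm1.3}, as opposed to its $\varepsilon>0$ weakening, is essential: real interpolation between the sharp endpoint estimate and any non-endpoint estimate gives sharp estimates on the entire open segment between them, with the derivative count $\gamma+\tilde\gamma+\delta$ (or $+\delta-1$) dictated by Sobolev embedding at each interpolated pair. Once this is verified, all the remaining entries of Table \ref{tab1} that involve an endpoint pair are filled in, completing the proof.
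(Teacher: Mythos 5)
Your overall skeleton (low/high frequency split, low frequency treated as in \cite{Inup}, a Keel--Tao type argument for the high frequency part, and the extra derivative for $\partial_t\cD$) matches the paper. However, the step that carries the actual content of Proposition \ref{prop1.4} beyond Theorem \ref{thm1.3} --- the off-diagonal cases in which exactly one of $(q,r)$, $(\tilde q,\tilde r)$ is the endpoint pair --- does not work as you describe. Proposition \ref{prop1.2} excludes the endpoint from \emph{both} slots, so every non-endpoint estimate available for interpolation has both exponent pairs strictly away from $(2,2(d-1)/(d-3))$. Real or bilinear interpolation between the diagonal endpoint estimate $(2,r^*)\times(2,r^*)$ and such an estimate $(q_1,r_1)\times(\tilde q_1,\tilde r_1)$ produces pairs $(q_\theta,r_\theta)\times(\tilde q_\theta,\tilde r_\theta)$ in which \emph{both} slots move off the endpoint simultaneously; the only way to keep one slot pinned at $(2,r^*)$ for $\theta\in(0,1)$ is to take $(q_1,r_1)=(2,r^*)$, which is exactly the estimate you are trying to prove. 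So ``fixing the endpoint pair on one side and varying the other'' is not something this interpolation can deliver from the inputs you have, and the off-diagonal endpoint entries of Table \ref{tab1} are left unproved.

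What the paper does instead is work at the level of the frequency-localized bilinear form: it decomposes $T=\sum_l T_l$ dyadically in $\tau-s$, proves the bounds of Lemma \ref{lem2.5} for $(1/a,1/b)$ in a full (two-parameter) neighborhood of $(1/r^*,1/r^*)$ --- not just on the diagonal --- and applies the Keel--Tao interpolation lemma to the sequence $\{T_l\}$ to sum at the endpoint; the general off-diagonal pairs $(q,r)\times(\tilde q,\tilde r)$ are then reached by a duality argument together with the Bernstein inequality on each frequency block, as in \cite[Lemmas 2.7--2.10]{Inup}. Note also that your attribution of the loss $\delta$ to the low-frequency part is incorrect: Lemma \ref{lem_str_est_low} is lossless, and the conditions in Table \ref{tab1} involve $\frac{d-1}{2}(1/2-1/r)$ versus $1/q$, i.e.\ wave admissibility; $\delta$ arises precisely in the high-frequency Bernstein step, which is the step your argument omits.
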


We also get the following Besov type estimate from Proposition \ref{prop1.1} and \ref{prop1.4} by the Littlewood--Paley decomposition. These Besov type Strichartz estimates are useful to analyze nonlinear problems. 

\begin{proposition}[Besov type homogeneous Strichartz estimates]
\label{prop1.5}
Let $s \in \R$. Assume that $(q,r)$ satisfies the assumptions in Proposition \ref{prop1.1}. Let $\gamma$ be as in Proposition \ref{prop1.1}. Then we have the following Besov type homogeneous Strichartz estimates.
\begin{align*}
	\norm{\cD(t) f}_{L^q(I:B^{s}_{r,2}(\R^d))} 
	&\cleq \norm{\jbra{\nabla}^{\gamma-1} f }_{B^{s}_{2,2}},
	\\
	\norm{\partial_t \cD(t)f}_{L_{t}^{q} (I: B^{s}_{r,2}(\R^d))} 
	&\cleq \norm{\jbra{\nabla}^{\gamma} f }_{B^{s}_{2,2}},
	\\
	\norm{\partial_t^2 \cD(t)f}_{L_{t}^{q} (I: B^{s}_{r,2}(\R^d))} 
	&\cleq \norm{\jbra{\nabla}^{\gamma+1} f }_{B^{s}_{2,2}},
\end{align*}
where  $I \subset \R$ is a time interval.
\end{proposition}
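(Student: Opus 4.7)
The strategy is the standard one for deducing Besov-type Strichartz estimates from their $L^r$-based counterparts: decompose $f$ into Littlewood--Paley pieces, apply Proposition~\ref{prop1.1} frequency by frequency, and reassemble via square summability, exploiting that $\cD(t)$, $\partial_t \cD(t)$, and $\partial_t^2 \cD(t)$ are Fourier multipliers in the spatial variable and therefore commute with each dyadic projector $P_N$.

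Concretely, I would fix an inhomogeneous Littlewood--Paley decomposition with low-frequency cutoff $P_{\leq 1}$ and annular projectors $P_N$ for dyadic $N \geq 2$, so that
\begin{equation*}
\|u\|_{B^s_{r,2}}^2 \ceq \|P_{\leq 1} u\|_{L^r}^2 + \sum_{N \geq 2} N^{2s} \|P_N u\|_{L^r}^2.
\end{equation*}
Since $q \geq 2$, Minkowski's inequality in the form $\|\cdot\|_{L^q_t(\ell^2_N)} \leq \|\cdot\|_{\ell^2_N(L^q_t)}$ (and trivially $\sup_t \bigl(\sum_N a_N(t)^2\bigr)^{1/2} \leq \bigl(\sum_N \sup_t a_N(t)^2\bigr)^{1/2}$ when $q=\infty$) then yields
\begin{equation*}
\|\cD(t) f\|_{L^q_t(I;\, B^s_{r,2})} \cleq \l( \|P_{\leq 1} \cD(t) f\|_{L^q_t L^r_x}^2 + \sum_{N \geq 2} N^{2s} \|P_N \cD(t) f\|_{L^q_t L^r_x}^2 \r)^{1/2}.
\end{equation*}
Because $P_N \cD(t) = \cD(t) P_N$ (immediate from $\cD(t) = e^{-t/2}\cF^{-1} L(t,\xi)\cF$), Proposition~\ref{prop1.1} applied to each $P_N f$ gives $\|\cD(t) P_N f\|_{L^q_t L^r_x} \cleq \|\jbra{\nabla}^{\gamma-1} P_N f\|_{L^2}$, and likewise for the low-frequency piece. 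Substituting and using that $\jbra{\nabla}^{\gamma-1}$ also commutes with every $P_N$, the right-hand side is comparable to
\begin{equation*}
\l( \|P_{\leq 1} \jbra{\nabla}^{\gamma-1} f\|_{L^2}^2 + \sum_{N \geq 2} N^{2s} \|P_N \jbra{\nabla}^{\gamma-1} f\|_{L^2}^2 \r)^{1/2} \ceq \|\jbra{\nabla}^{\gamma-1} f\|_{B^s_{2,2}},
\end{equation*}
which gives the first estimate. The estimates for $\partial_t \cD(t)$ and $\partial_t^2 \cD(t)$ follow verbatim by invoking the corresponding inequalities of Proposition~\ref{prop1.1}.

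No genuine obstacle arises here; the argument is essentially bookkeeping, and Proposition~\ref{prop1.5} is an immediate consequence of Proposition~\ref{prop1.1} once the Littlewood--Paley decomposition is in place. The only points that need to be noted are the commutation of the solution operators with $P_N$ and the direction of the Minkowski inequality, which is precisely what the hypothesis $q \geq 2$ buys us.
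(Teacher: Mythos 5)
Your proposal is correct and follows essentially the same route as the paper's own proof in Appendix~\ref{secA}: Littlewood--Paley decomposition, the Minkowski inequality $\|\cdot\|_{L^q_t(\ell^2_j)} \leq \|\cdot\|_{\ell^2_j(L^q_t)}$ (valid since $q \geq 2$), application of Proposition~\ref{prop1.1} to each dyadic piece using the commutation of the multiplier $\cD(t)$ with the projectors, and reassembly into the $B^s_{2,2}$ norm. No discrepancies worth noting.
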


\begin{proposition}[Besov type inhomogeneous Strichartz estimates]
\label{prop1.6}
Let $s \in \R$. Assume that $(q,r)$ and $(\tilde{q},\tilde{r})$ satisfy the assumptions in Proposition \ref{prop1.4}. Let $\gamma$ and $\tilde{\gamma}$ be as in Proposition \ref{prop1.4} and $\delta$ be defined in Table \ref{tab1}. Then we have the following Besov type inhomogeneous Strichartz estimates.
\begin{align*}
	\norm{ \int_{0}^{t} \cD(t-s) F(s) ds}_{L^q(I:B^{s}_{r,2}(\R^d))} 
	&\cleq \norm{\jbra{\nabla}^{\gamma+\tilde{\gamma} + \delta-1} F }_{L^{\tilde{q}'}(I:B^{s}_{\tilde{r}',2}(\R^d))},
	\\
	\norm{ \int_{0}^{t} \partial_t \cD(t-s) F(s) ds}_{L^q(I:B^{s}_{r,2}(\R^d))} 
	&\cleq \norm{\jbra{\nabla}^{\gamma+\tilde{\gamma} + \delta} F }_{L^{\tilde{q}'}(I:B^{s}_{\tilde{r}',2}(\R^d))},
\end{align*}
where $I \subset \R$ is a time interval.
\end{proposition}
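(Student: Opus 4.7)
The plan is to reduce Proposition~\ref{prop1.6} to the $L^q_t L^r_x$ estimate in Proposition~\ref{prop1.4} by a Littlewood--Paley decomposition, connected by two applications of Minkowski's inequality that are both admissible under the standing hypothesis $q, \tilde{q} \geq 2$. Since $\cD(t-s)$ is a Fourier multiplier, it commutes with each dyadic Littlewood--Paley projector $P_N$; so if we write $u(t) := \int_0^t \cD(t-s) F(s)\, ds$, then $P_N u(t) = \int_0^t \cD(t-s) P_N F(s)\, ds$, and Proposition~\ref{prop1.4} applied to $P_N F$ gives, for each dyadic $N$,
\[
\| P_N u\|_{L^q_t(I: L^r_x)} \cleq \|\jbra{\nabla}^{\gamma+\tilde{\gamma}+\delta-1} P_N F\|_{L^{\tilde{q}'}_t(I: L^{\tilde{r}'}_x)}.
\]

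Because $q \geq 2$, Minkowski's inequality pulls the $\ell^2_N$--summation outside the $L^q_t L^r_x$ norm:
\[
\|u\|_{L^q_t(I: B^{s}_{r,2})} \leq \Bigl( \sum_N N^{2s}\, \|P_N u\|_{L^q_t(I: L^r_x)}^2 \Bigr)^{1/2}.
\]
Inserting the frequency-localized bound above, and using that $N^s P_N$ is comparable to $\jbra{\nabla}^s P_N$ uniformly in $N$ on each dyadic block, the right-hand side is controlled by
\[
\Bigl( \sum_N N^{2s}\, \|\jbra{\nabla}^{\gamma+\tilde{\gamma}+\delta-1} P_N F\|_{L^{\tilde{q}'}_t(I: L^{\tilde{r}'}_x)}^2 \Bigr)^{1/2}.
\]
Because $\tilde{q}' \leq 2$, a second application of Minkowski's inequality---this time in the opposite direction---moves the $\ell^2_N$--summation \emph{inside} the $L^{\tilde{q}'}_t$--norm, and the last display is bounded by $\|\jbra{\nabla}^{\gamma+\tilde{\gamma}+\delta-1} F\|_{L^{\tilde{q}'}_t(I: B^{s}_{\tilde{r}',2})}$. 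This is the first inequality of the proposition; the second, with $\partial_t \cD$ in place of $\cD$, follows by the identical argument using the corresponding bound of Proposition~\ref{prop1.4}.

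The only point requiring care is the consistency of the directions of the two Minkowski inequalities, which rely respectively on $q \geq 2$ and $\tilde{q}' \leq 2$; both are ensured by the hypotheses of Proposition~\ref{prop1.4}. Apart from this, the argument is the standard Littlewood--Paley passage from an $L^q L^r$ Strichartz bound to its Besov version, and no analytic input beyond Proposition~\ref{prop1.4} is required.
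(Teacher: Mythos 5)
Your proposal is correct and is essentially the paper's own proof: a blockwise application of Proposition \ref{prop1.4} to each Littlewood--Paley piece, combined with Minkowski's inequality in the two directions permitted by $q \geq 2$ and $\tilde{q}' \leq 2$. The only cosmetic difference is that the paper works with the inhomogeneous Besov norm, so the low-frequency block $\Delta_{\leq 0}$ is carried along as a separate term rather than absorbed into the dyadic $\ell^2_N$ sum, but it is handled by exactly the same two steps.
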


\begin{remark}
In fact, we can take $r=\infty$ in the Besov type inequality. 
\end{remark}

In the present paper, we also discuss the application of the endpoint Strichartz estimates to a nonlinear problem.
We consider the following energy critical nonlinear damped wave equation.
\begin{align}
\label{NLDW}
\tag{NLDW}
\left\{
\begin{array}{ll}
	\partial_t^2 u - \Delta u +\partial_t u = |u|^{\frac{4}{d-2}}u, & (t,x) \in [0,T) \times \mathbb{R}^d,
	\\
	(u(0),\partial_t u(0)) = (u_0,u_1), & x\in \mathbb{R}^d,
\end{array}
\right.
\end{align}
where $d \geq 3$, $(u_0,u_1)$ is given, and $u$ is an unknown complex valued function. In the previous paper \cite{Inup} (see also \cite{Wat17}), we show the local well-posedness of \eqref{NLDW} when $3 \leq d \leq 5$ and we give the propositions of the behavior of the solutions for $d \geq 3$. In this paper, we will show the  local well-posedness of \eqref{NLDW} when $d \geq 6$. 
In these local well-posedness, we needed auxiliary function spaces to prove the uniqueness. We will show the unconditional uniqueness of the solution to \eqref{NLDW} in the energy space $H^1(\R^d) \times L^2(\R^d)$ when $d \geq 4$.
Namely, we will remove those auxiliary spaces by applying the endpoint Strichartz estimates. We give the precise definition of the solution to \eqref{NLDW}.

\begin{definition}[solution]
\label{def1.1}
We say that $u$ is a solution to \eqref{NLDW} on a time interval $I$ with $0 \in I$ if $u$ satisfies $(u,\partial_t u) \in C(I:H^1(\R^d)\times L^2(\R^d))$ 
and the Duhamel formula
\begin{align*}
	u(t,x)=\cD(t) (u_0+u_1) +\partial_t \cD(t) u_0 + \int_{0}^{t} \cD(t-s) ( |u(s)|^{\frac{4}{d-2}}u(s) ) ds
\end{align*}
in the sense of tempered distributions for every $t \in I$.
\end{definition}

\begin{remark}
We emphasize that the solutions may not belong to the Strichartz spaces. This definition is different from Definition 1.1 in \cite{Inup}.
\end{remark}

First, we show the local well-posedness when $d \geq 6$, which was not treated in \cite{Inup}.

\begin{theorem}[L.W.P when $d \geq 6$]\label{thm_lwp}
Let $d \geq 6$ and $T\in (0,\infty]$. Let $(u_0,u_1) \in H^1(\R^d)\times L^2(\R^d)$ satisfy $\| (u_0,u_1) \|_{H^1 \times L^2} \leq A$. Then, there exists $\delta=\delta(A)>0$ such that if 
\begin{align*}
	\norm{\cD(t) (u_0+u_1) +\partial_t \cD(t) u_0}_{L_{t,x}^{\frac{2(d+1)}{d-2}}([0,T))} \leq \delta,
\end{align*}
then there exists a solution $u$ to \eqref{NLDW} with $\|  u \|_{L_{t,x}^{2(d+1)/(d-2)}}([0,T)) \leq 2 \delta$. Moreover, we have the standard blow-up criterion, that is, if the maximal existence time $T_{+}=T_{+}(u_0,u_1)$ is finite, then the solution satisfies $\norm{u}_{L^{2(d+1)/(d-2)}([0,T_{+}))}=\infty$. 
\end{theorem}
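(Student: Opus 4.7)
The plan is a standard Banach contraction argument applied to the Duhamel map
\[
\Phi(u)(t) := \cD(t)(u_0+u_1) + \partial_t \cD(t) u_0 + \int_0^t \cD(t-s)\, F(u)(s)\, ds,
\qquad F(u) := |u|^{\frac{4}{d-2}} u,
\]
carried out in a Banach space $X(I)$ whose norm combines the scattering norm $\|\cdot\|_{L^{p}_{t,x}(I)}$ (with $p = 2(d+1)/(d-2)$) and an auxiliary Strichartz norm $\|\cdot\|_{Y(I)}$ at the $\dot H^1$ level. I will choose $Y(I)$ using Proposition \ref{prop1.4} (or its Besov version Proposition \ref{prop1.6}) so that the derivative loss $\gamma+\tilde\gamma+\delta-1$ produced by the inhomogeneous Strichartz estimate does not exceed the H\"older exponent $\tfrac{4}{d-2}$ of $F$; since $\tfrac{4}{d-2}\leq 1$ for $d\geq 6$, this forces a careful tuning of exponents. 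The homogeneous Strichartz estimate (Proposition \ref{prop1.1}) will then yield $\|u_L\|_{Y(I)} \cleq A$ for the linear evolution $u_L(t) = \cD(t)(u_0+u_1) + \partial_t \cD(t) u_0$, while the hypothesis controls $\|u_L\|_{L^p_{t,x}(I)}\leq \delta$.

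For the nonlinear term I will combine the inhomogeneous Strichartz estimate with the pointwise bound $|F(u)|\cleq |u|^{(d+2)/(d-2)}$ and the fractional Leibniz--chain rule for $F$, leading schematically to
\[
\left\| \int_0^t \cD(t-s) F(u)(s)\, ds \right\|_{X(I)} \cleq \|u\|_{L^p_{t,x}(I)}^{\frac{4}{d-2}} \|u\|_{Y(I)} + \|u\|_{L^p_{t,x}(I)}^{\frac{d+2}{d-2}}.
\]
Thus choosing $\delta=\delta(A)$ small enough ensures that $\Phi$ maps the closed ball $B:=\{\,u\in X(I): \|u\|_{L^p_{t,x}(I)}\leq 2\delta,\ \|u\|_{Y(I)}\leq C_0 A\,\}$ into itself. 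For the contraction I will use only the weaker $L^p_{t,x}$ metric together with the pointwise H\"older-type estimate $|F(u)-F(v)|\cleq (|u|+|v|)^{\frac{4}{d-2}} |u-v|$, which upon integration and one application of Proposition \ref{prop1.4} yields $\|\Phi(u)-\Phi(v)\|_{L^p_{t,x}(I)}\leq \tfrac 12 \|u-v\|_{L^p_{t,x}(I)}$ for $\delta$ small. The fixed point $u\in B$ is the desired solution; its $(u,\partial_t u)\in C(I; H^1\times L^2)$ regularity will follow from applying the Strichartz estimates at the energy pair $(q,r)=(\infty,2)$ to the Duhamel formula and its time derivative. The standard blow-up criterion is then obtained in the usual way: if $T_+<\infty$ were paired with $\|u\|_{L^{p}_{t,x}([0,T_+))}<\infty$, a local extension starting near $T_+$ using this theorem at some $t_0$ close to $T_+$ would contradict the maximality of $T_+$.

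The main obstacle is the first step, namely selecting the Strichartz exponents and the auxiliary norm $Y$ so that three constraints hold simultaneously for every $d\geq 6$: (i) the homogeneous Strichartz estimate controls the linear evolution using only $H^1\times L^2$ data (which restricts the fractional Sobolev index appearing in $Y$, because no more than one full derivative may land on the $L^2$-datum $u_1$); (ii) the derivative loss coming from Proposition \ref{prop1.4} is no larger than $\tfrac{4}{d-2}$, so that the fractional chain rule for the $C^{4/(d-2)}$-H\"older nonlinearity $F$ applies; and (iii) the H\"older exponents arising in the spatial fractional Leibniz step close up via Sobolev embeddings from $H^1(\R^d)$. Navigating these constraints is exactly what distinguishes the $d\geq 6$ case from the range $3\leq d\leq 5$ treated in \cite{Inup}; the flexibility to do so is provided by the heat-like low-frequency smoothing of $\cD(t)$ encoded in Table \ref{tab1}.
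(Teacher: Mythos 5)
There is a genuine gap in your contraction step, and it is precisely the obstruction that makes $d\geq 6$ hard. You propose to close the contraction in the plain $L^{p}_{t,x}$ metric ($p=\tfrac{2(d+1)}{d-2}$) via the pointwise bound $|F(u)-F(v)|\cleq(|u|+|v|)^{\frac{4}{d-2}}|u-v|$ and ``one application of Proposition \ref{prop1.4}.'' But for the damped wave equation (as for the wave equation) there is no derivative-free inhomogeneous Strichartz estimate landing in $L^{p}_{t,x}$ from a space compatible with this pointwise bound. At the diagonal pair $(q,r)=(\tilde q,\tilde r)=(p,p)$ one computes $\gamma=\tilde\gamma=1$ and $\delta=0$, so Proposition \ref{prop1.4} gives $\bigl\|\int_0^t\cD(t-s)G\,ds\bigr\|_{L^p_{t,x}}\cleq\|\jbra{\nabla}G\|_{L^{p'}_{t,x}}$ — a full derivative lands on $F(u)-F(v)$. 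Estimating $\nabla(F(u)-F(v))$ produces $F'(u)-F'(v)$, and $F'$ is only $\tfrac{4}{d-2}$-H\"older (not Lipschitz) for $d\geq 7$, plus terms in $\nabla(u-v)$ that are not controlled by $\|u-v\|_{L^p_{t,x}}$. The only loss-free choice ($\tilde\gamma=\delta=0$) forces $(\tilde q',\tilde r')=(1,2)$, and H\"older does not place $(|u|+|v|)^{\frac{4}{d-2}}|u-v|$ in $L^1_tL^2_x$ using only $\|u\|_{L^p_{t,x}},\|v\|_{L^p_{t,x}}$ and the energy norm (the resulting spatial exponent is $\tfrac{2(d+1)}{d+2}<2$). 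For the same reason your schematic bound with the stand-alone term $\|u\|_{L^p_{t,x}}^{\frac{d+2}{d-2}}$ is not obtainable from Proposition \ref{prop1.4}. Your auxiliary space $Y$ ``at the $\dot H^1$ level'' does not help: carrying one full derivative through the difference estimate is exactly what fails when $F'$ is not Lipschitz.

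The paper's proof avoids this by working in exotic Strichartz spaces carrying only $\tfrac{2}{d}$ spatial derivatives (the spaces $X(I)$, $X'(I)$, $Y(I)$, $S^1(I)$ of Section \ref{sec3.1}, adapted from Bulut et al.\ \cite{Bu13}): since $\tfrac{2}{d}<\tfrac{4}{d-2}$, the H\"older continuity of $F'$ suffices for the difference estimate $\|F(u)-F(v)\|_{X'(I)}$ in Lemma \ref{lem3.3}, and the contraction is run in the metric $d(u,v)=\|u-v\|_{X(I)}$ rather than in $L^p_{t,x}$; the interpolation inequalities of Lemma \ref{lem3.1} then transfer smallness of the $S(I)=L^p_{t,x}$ norm of the free evolution into smallness of its $X(I)$ norm. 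Your outline of the homogeneous part, the self-mapping of the ball, the recovery of $C(I;H^1\times L^2)$ regularity, and the blow-up criterion are all consistent with the paper, but the contraction mechanism as you describe it would not close for $d\geq 7$, and identifying the correct low-derivative spaces is the substantive content of the proof.
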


The local well-posedness can be proved more easily. See e.g. \cite{IkWa17p}. The argument is based on the perturbation argument of wave or Klein-Gordon equations. However, their method only works for the local problem. On the other hand, our method based on the Strichartz estimates for the damped wave equation can treat the global property. It is worth emphasizing that Theorem \ref{thm_lwp} implies small data global existence, which is one of our important contribution. This also implies the decay of small solutions (see \cite[Theorem 1.4]{Inup}) Moreover, we can show long time perturbation by the similar argument to the proof of Theorem \ref{thm_lwp}.

We  give the unconditional uniqueness of the solution to \eqref{NLDW} in the energy space $H^1(\R^d) \times L^2(\R^d)$ for $d \geq 4$. 

\begin{theorem}[unconditional uniqueness]\label{thm_uu}
Let $d \geq 4$. Let $u,v$ be solutions (in the sense of Definition \ref{def1.1}) to \eqref{NLDW} with the initial data $(u_0,u_1), (v_0,v_1) \in H^1(\R^d) \times L^2(\R^d)$, respectively. If $(u_0,u_1)=(v_0,v_1)$, then we have $u=v$. 
\end{theorem}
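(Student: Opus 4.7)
Let $u, v$ be two solutions with the common initial data $(u_0,u_1)$, and set $w := u-v$ and $G := F(u)-F(v)$ with $F(z) := |z|^{4/(d-2)}z$. By Definition~\ref{def1.1} both lie in $C(I;H^1(\R^d))$, and subtracting the two Duhamel formulas gives
\begin{equation*}
	w(t)=\int_0^t\cD(t-s)G(s)\,ds,\qquad w(0)=0.
\end{equation*}
It suffices to show $w\equiv 0$ on some initial slice $[0,T_0]$, since the same argument applied at $T_0$ extends this forward in time and covers the common time interval $I$.

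The plan is to control $w$ in the wave-endpoint norm $X(T) := \norm{w}_{L^2([0,T];L^r(\R^d))}$ with $r := 2(d-1)/(d-3)$, which is exactly the range where Theorem~\ref{thm1.3} applies. Plugging the Duhamel identity into the endpoint inhomogeneous Strichartz estimate gives
\begin{equation*}
	X(T)\leq C\norm{\jbra{\nabla}^{2/(d-1)}G}_{L^2([0,T];L^{r'}(\R^d))},\qquad r'=\tfrac{2(d-1)}{d+1}.
\end{equation*}
The right hand side would be handled via the pointwise bound $|G|\leq C(|u|^{4/(d-2)}+|v|^{4/(d-2)})|w|$, together with the Kato--Ponce fractional Leibniz rule and the Christ--Weinstein fractional chain rule (the latter needed for $d\geq 5$, where $4/(d-2)$ is not an integer). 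Combined with the Sobolev embedding $H^1(\R^d)\hookrightarrow L^{2d/(d-2)}(\R^d)$ applied to $u$ and $v$, and with a Hölder-in-time step that converts $w(0)=0$ and $w\in C_tH^1_x$ into a factor $T^{\alpha}$ for some $\alpha>0$, this yields
\begin{equation*}
	\norm{\jbra{\nabla}^{2/(d-1)}G}_{L^2([0,T];L^{r'})}\leq C\,M^{4/(d-2)}\,T^{\alpha}\,X(T),
\end{equation*}
where $M := \norm{u}_{L^\infty_tH^1_x}+\norm{v}_{L^\infty_tH^1_x}<\infty$.

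Before the bootstrap can be closed one must justify that $X(T)<\infty$ on a bounded interval a priori, which is not automatic because $H^1(\R^d)$ does \emph{not} embed into $L^r(\R^d)$ for $r=2(d-1)/(d-3)$. I would obtain this via a Littlewood--Paley decomposition $w=\sum_N P_Nw$: the high-frequency piece is controlled through $\norm{w}_{L^\infty_tH^1_x}$ and Bernstein, while the low-frequency piece, being band-limited and smooth in $x$, is inserted directly into the Strichartz machinery (Proposition~\ref{prop1.5}); together with a continuity-in-$T$ argument, this yields $X(T)<\infty$ on bounded intervals. The inequality above then reads $X(T)\leq C(M)T^{\alpha}X(T)$, and choosing $T$ small enough forces $X(T)=0$, hence $w\equiv 0$ on $[0,T]$. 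The main obstacle is the careful handling of the fractional derivative $\jbra{\nabla}^{2/(d-1)}$ applied to the non-smooth nonlinearity when $d\geq 5$, which requires a delicate balancing of Hölder exponents in the fractional chain rule; the fact that Theorem~\ref{thm1.3} produces the \emph{exact} endpoint rather than the $\varepsilon$-loss version of Remark~\ref{rmk1.1.1} is essential here, as any positive $\varepsilon$ would destroy the Hölder accounting used to close the bootstrap.
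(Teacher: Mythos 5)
There is a genuine gap, and it sits exactly at the two points you flag as ``to be handled.'' First, the a priori finiteness of $X(T)=\norm{w}_{L^2_tL^r_x}$ with $r=2(d-1)/(d-3)$ cannot be obtained from $w\in C_tH^1_x$ by Littlewood--Paley plus Bernstein: since $\frac12-\frac1r=\frac{1}{d-1}$, Bernstein gives $\norm{P_Nw}_{L^r}\lesssim N^{d/(d-1)}\norm{P_Nw}_{L^2}\lesssim N^{1/(d-1)}\norm{w}_{H^1}$ for $N\geq1$, and the high-frequency sum diverges; this is just the statement that $H^1\not\hookrightarrow L^r$, and no continuity-in-$T$ argument repairs a norm that may be $+\infty$ for each fixed $t$. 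Second, the derivative bookkeeping does not close: with both pairs at the endpoint the inhomogeneous estimate \eqref{eq0.1} loses $\gamma+\tilde\gamma-1=\frac{2}{d-1}$ derivatives, so you must place $\jbra{\nabla}^{2/(d-1)}$ on $F(u)-F(v)$ while controlling only \emph{zero} derivatives of $w$ on the left; Kato--Ponce/Christ--Weinstein inevitably produce a term with $\frac{2}{d-1}$ derivatives on $w$ in an $L^2_tL^{p}_x$ space, which is not controlled by $X(T)$ or by $C_tH^1_x$, so the bootstrap inequality you write down is not actually reachable by those tools. Third, in this energy-critical problem there is no room for a factor $T^\alpha$: the exponents are scaling-tight, and a bound of the form $C(M)T^\alpha X(T)$ with $M$ the energy norm would give a uniqueness time depending only on the energy, which is not available; smallness must instead come from the vanishing of $\norm{w}_{L^\infty_tH^1}$ (via $w(0)=0$ and continuity) and of the Strichartz norm of $u$ as the interval shrinks.

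The paper's proof resolves both obstructions simultaneously by measuring $w$ in the \emph{negative-regularity} endpoint norm $L^2_t B^{-1/(d-1)}_{2(d-1)/(d-3),2}$: the index $-\frac{1}{d-1}$ is exactly chosen so that $H^1(\R^d)\hookrightarrow B^{-1/(d-1)}_{2(d-1)/(d-3),2}(\R^d)$ (critical Sobolev--Besov embedding), which gives a priori finiteness from $w\in C_tH^1_x$ alone, and so that the $\frac{2}{d-1}$-derivative loss of the endpoint estimate is absorbed (for the term $wH(u,w)$ one estimates the right-hand side at regularity $+\frac{1}{d-1}$, and for $wF'(u)$ one uses a non-endpoint dual pair with zero net loss). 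Because $w$ then carries negative regularity, the fractional Leibniz/chain rules are replaced by the splitting $F(u)-F(v)=wF'(u)+wH(u,w)$ with $H$ controlled through the H\"older continuity of $F'$, together with the paraproduct estimates of Lemma \ref{lem_pp} in inhomogeneous Besov spaces; the argument is also asymmetric in $u$ and $v$, using that one of the two solutions may be assumed (via the local well-posedness construction) to satisfy the Strichartz bound \eqref{4_sol_est}. None of these ingredients appears in your outline, and without them the scheme as proposed does not close.
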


\begin{remark}
The unconditional uniqueness is a local problem. However, it seems to be difficult to apply the result for the corresponding energy critical nonlinear wave equation to our unconditional uniqueness problem since we need to treat the inhomogeneous Sobolev space. If we have the result for the energy critical nonlinear Klein-Gordon equation, we can solve our problem as a perturbation of it. However, the authors could not find the result for the Klein-Gordon equation.  Our method in this paper may be applicable to the Klein-Gordon equation. 
\end{remark}

\begin{remark}
The unconditional uniqueness when $d=3$ remains open since its endpoint homogeneous Strichartz estimate does not hold. See Appendix D. 
\end{remark}

For the corresponding energy critical nonlinear wave equation, the local well-posedness when $d \geq6$ and the unconditional uniqueness when $d \geq4$ was obtained by Bulut et.al. \cite{Bu13}. The proof of the local well-posedness when $d \geq6$ depends on exotic Strichartz estimates and the proof of the unconditional uniqueness relies on paraproduct estimates for the homogeneous Besov spaces. Though our argument is based on their argument, the homogeneous Besov spaces do not match the damped wave equation. We need the inhomogeneous Sobolev and Besov spaces and thus, in particular, we need to show paraproduct estimates for the inhomogeneous Besov spaces.

\begin{notation}
For the exponent $p$, we denote the H\"{o}lder conjugate of $p$ by $p'$. The bracket $\jbra{\cdot}$ is Japanese bracket, i.e., $\jbra{a}:=(1+|a|^2)^{1/2}$. 

We use $A \lesssim B$ to denote the estimate $A \leq CB$ with some constant $C>0$.
The notation $A \sim B$ stands for $A \lesssim B$ and $B \lesssim A$.

For a function $f : \mathbb{R}^n \to \mathbb{C}$,
we define the Fourier transform and the inverse Fourier transform by
\begin{align*}
	\mathcal{F} [f] (\xi) = \hat{f} (\xi) = (2\pi)^{-n/2}
		\int_{\mathbb{R}^d} e^{-i x \xi} f(x) dx, \\ 
	\mathcal{F}^{-1} [f] (x) = (2\pi)^{-n/2}
		\int_{\mathbb{R}^d} e^{i x \xi} f(\xi) d\xi.
\end{align*}

For a measurable function $m = m(\xi)$, we denote
the Fourier multiplier $m(\nabla)$ by
\begin{align*}
	m(\nabla) f (x) = \cF^{-1} \left[ m(\xi) \hat{f}(\xi) \right] (x).
\end{align*}

For $s \in \mathbb{R}$ and $1\leq p \leq \infty$,
we denote the usual Sobolev space by
\begin{align*}
	W^{s,p}(\mathbb{R}^d) := \left\{ f \in \mathcal{S}' (\mathbb{R}^d) :
		\| f \|_{W^{s,p}} = \| \langle \nabla \rangle^s f \|_{L^p} < \infty \right\}.
\end{align*}
We write $H^s(\mathbb{R}^d) := W^{s,2}(\mathbb{R}^d)$ for simplicity. Let $\dot{W}^{s,p}(\mathbb{R}^d)$ and $\dot{H}^{s}(\R^d)$ denote the corresponding homogeneous Sobolev spaces.

For a time interval $I$ and $F:I\times \R^d \to \C$, we set 
\begin{align*}
	\norm{F}_{L^{q}(I:L^r(\R^d))}:=  \l(\int_{I} \norm{F(t,\cdot)}_{L^r(\R^d)}^q dt \r)^{1/q}
\end{align*}
and $L_{t,x}^{q}(I):=L^{q}(I:L^{q}(\R^d))$. We sometimes use $L_s^p$ and $L_t^p$ to uncover time variables $s$ and $t$. 

We define the Littlewood--Paley decomposition as follows.
Let $\chi$ be a radial nonnegative $C^{\infty}$-function supported in
$\{ \xi \in \mathbb{R}^d: |\xi| \leq \frac{25}{24}\}$ with $\chi(\xi)=1$
on the unit ball $\{ \xi \in \mathbb{R}^d: |\xi| \leq 1 \}$. For $a>0$, we define
\begin{align*}
	\chi_{\leq a} (\xi) := \chi \left( \frac{\xi}{a} \right)
	\text{ and }
	\chi_{> a} (\xi) :=1- \chi_{\leq a} (\xi).
\end{align*}
For each integer $j \in \mathbb{Z}$, we set 
\begin{align*}
	\widehat{ \Delta_{\leq j} f} (\xi) 
	&:= \chi_{\leq 2^{j}} (\xi) \widehat{f}(\xi),
	\\
	\widehat{ \Delta_{> j} f} (\xi) 
	&:= \chi_{> 2^{j}} (\xi) \widehat{f}(\xi),
	\\
	\widehat{ \Delta_{j} f} (\xi) 
	&=  \chi_{ 2^{j}} (\xi) \widehat{f}(\xi)
	:=\left( \chi_{\leq 2^{j}} (\xi) - \chi_{\leq 2^{j-1}} (\xi) \right)\widehat{f}(\xi).
\end{align*}
We also define
\begin{align*}
	\widehat{ \Delta_{< j} f} (\xi) 
	&:=\chi_{\leq 2^{j-1}} \widehat{f}(\xi),
	\\
	\widehat{ \Delta_{\geq j} f} (\xi) 
	&:= \chi_{> 2^{j-1}} (\xi) \widehat{f}(\xi),
	\\
	\widehat{ \Delta_{j < \cdot \leq l } f} (\xi) 
	&:= \widehat{ \Delta_{\leq l } f} (\xi) - \widehat{ \Delta_{\leq j } f} (\xi) 
	= \left( \chi_{\leq 2^{l}} (\xi) - \chi_{\leq 2^{j}} (\xi) \right)\widehat{f}(\xi)
\end{align*}
for $j <l$. 
Moreover we assume that $\{\chi_j\}_{j \in \mathbb{Z}}$ give a dyadic partition of unity as follows. 
\begin{align*}
	\Delta_{\leq 0} + \sum_{j=1}^{\infty} \Delta_j =\mathrm{Id} \text{ on } \mathscr{S}'
	\text{ and } 
	\sum_{j\in \mathbb{Z}} \Delta_j =\mathrm{Id} \text{ on } \mathscr{S}'.
\end{align*}
We also set $\tilde{\Delta}_{j}=\Delta_{j-1} + \Delta_{j}+\Delta_{j+1}$
and correspondingly, $\widetilde \chi_{2^j}(\xi) = \chi_{2^{j-1}}(\xi) + \chi_{2^j}(\xi) + \chi_{2^{j+1}}(\xi)$.

For $1\leq p,q \leq \infty$ and $s \in \mathbb{R}$, we define inhomogeneous Besov norm by
\begin{align*}
	\| f \|_{B_{p,q}^{s}} 
	:=\| \Delta_{\leq 0} f \|_{L^p} + \left\|  \{ 2^{js} \| \Delta_j f \|_{L^p}  \}_{j=1}^{\infty} \right\|_{l^{q}}
\end{align*}
and inhomogeneous Besov space by
\begin{align*}
	B_{p,q}^{s}(\R^d) := \{ f \in \mathscr{S}' : \| f \|_{B_{p,q}^{s}} < \infty \}.
\end{align*}
We denote the homogeneous space of a space-time function space $\mathcal{X}$ by $\dot{\mathcal{X}}$, that is, we replace the inhomogeneous Sobolev space or the inhomogeneous Besov spaces by their homogeneous spaces and we do not change the Lebesgue space. 

\end{notation}

This paper is structured as follows. 
Section \ref{sec2} is devoted to show the endpoint Strichartz estimates. Especially, the $L^\infty$-$L^1$ estimate is proven in Section \ref{sec2.1} and the bilinear estimates and the endpoint Strichartz estimate are given in Section \ref{sec2.2}. In Section \ref{sec3}, we prove the local well-posedness of \eqref{NLDW} when $d\geq6$.  In Section \ref{sec4}, we show the unconditional uniqueness of \eqref{NLDW}.  In Appendix, we collect some lemmas and we give the proof of the Besov type Strichartz estimates. In Appendix D, we show the endpoint homogeneou Strichartz estimate does not hold when $d=3$. 


\section{Endpoint Strichartz estimate}
\label{sec2}

We have the following lemma for the low frequency part. 
\begin{lemma}[{\cite[Lemma 2.3]{Inup}}]\label{lem_str_est_low}
Let $1 \leq \tilde{r}' \leq r \leq \infty$ and  $1\leq q, \tilde{q} \leq \infty$. 
Assume that they satisfy 
\begin{align*}
	\frac{d}{2} \l( \frac{1}{2} - \frac{1}{r}\r) + \frac{d}{2} \l( \frac{1}{2} - \frac{1}{\tilde{r}}\r)
	> \frac{1}{q} + \frac{1}{\tilde{q}},
\end{align*}
or
\begin{align*}
	\frac{d}{2} \l( \frac{1}{2} - \frac{1}{r}\r) + \frac{d}{2} \l( \frac{1}{2} - \frac{1}{\tilde{r}}\r) 
	= \frac{1}{q} + \frac{1}{\tilde{q}}
	\text{ and }
	1< \tilde{q}' < q<\infty.
\end{align*}
Then it holds that 
\begin{align*}
	\norm{ \int_{0}^{t} \cD(t-s) P_{\leq 1} F(s) ds}_{L^{q}(I:L^r(\R^d))}
	&\cleq \norm{F}_{L^{\tilde{q}'}(I:L^{\tilde{r}'}(\R^d))},
	\\
	\norm{\int_{0}^{t} \partial_t \cD(t-s) P_{\leq 1} F(s) ds}_{L^{q}(I:L^r(\R^d))}
	&\cleq \norm{F}_{L^{\tilde{q}'}(I:L^{\tilde{r}'}(\R^d))},
\end{align*}
where $I \subset [0,\infty)$ is a time interval such that $0 \in \overline{I}$ and the implicit constant is independent of $I$. 
\end{lemma}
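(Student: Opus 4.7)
The plan is to exploit that, in the low-frequency regime, $\cD(t)P_{\leq 1}$ behaves like the heat semigroup, so one can establish a parabolic $L^{\tilde r'}\to L^r$ decay estimate and then deduce the Strichartz bound by a standard time-integral inequality.

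First I would establish the pointwise-in-time decay
\begin{equation*}
\|\cD(t)P_{\leq 1}f\|_{L^r}+\|\partial_t\cD(t)P_{\leq 1}f\|_{L^r}\lesssim (1+t)^{-\frac{d}{2}\left(\frac{1}{\tilde r'}-\frac{1}{r}\right)}\|f\|_{L^{\tilde r'}}
\end{equation*}
for $1\leq\tilde r'\leq r\leq\infty$. For $|\xi|\leq 1/2$, rewrite
\begin{equation*}
e^{-t/2}L(t,\xi)=\frac{e^{-t\mu_-(\xi)}-e^{-t\mu_+(\xi)}}{2\sqrt{1/4-|\xi|^2}},\qquad \mu_{\pm}(\xi):=\tfrac{1}{2}\pm\sqrt{\tfrac{1}{4}-|\xi|^2}.
\end{equation*}
Since $\mu_-(\xi)\sim|\xi|^2$ and $\mu_+(\xi)\sim 1$ for small $|\xi|$, the first summand is a heat-type multiplier that yields the stated decay via Young's inequality with a Gaussian-like kernel, while the second is uniformly exponentially small in $t$. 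On the annulus $1/2<|\xi|\lesssim 1$ the prefactor $e^{-t/2}$ alone gives exponential decay against the smooth, compactly supported oscillatory symbol, which is handled by Bernstein. Differentiating in $t$ either produces a bounded factor $\mu_\pm(\xi)\lesssim 1$ or the innocuous constant $-1/2$, so the same bound holds for $\partial_t\cD$.

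Plugging this into the Duhamel integral and taking the spatial $L^r$-norm yields
\begin{equation*}
\Big\|\int_0^t\cD(t-s)P_{\leq 1}F(s)\,ds\Big\|_{L^r_x}\lesssim\int_0^t(1+|t-s|)^{-\beta}\|F(s)\|_{L^{\tilde r'}}\,ds,\qquad \beta:=\tfrac{d}{2}\Big(\tfrac{1}{\tilde r'}-\tfrac{1}{r}\Big),
\end{equation*}
and identically for $\partial_t\cD$. Taking the $L^q_t(I)$-norm, in the strict inequality case $\beta>\frac{1}{q}+\frac{1}{\tilde q}$ the kernel $(1+t)^{-\beta}$ lies in $L^{\sigma}(\R)$ with $\frac{1}{\sigma}=\frac{1}{q}+\frac{1}{\tilde q}$, and Young's convolution inequality (exponent relation $1+\frac{1}{q}=\frac{1}{\sigma}+\frac{1}{\tilde q'}$) closes the estimate. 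In the equality case $\beta=\frac{1}{\sigma}$ the kernel lies only in weak $L^\sigma$, and I would instead invoke the Hardy--Littlewood--Sobolev inequality on the half-line, whose non-endpoint hypothesis requires exactly $1<\tilde q'<q<\infty$, matching the stated assumption.

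The main obstacle is the uniform pointwise decay estimate: one must handle the smooth matching across $|\xi|=1/2$, where $L(t,\xi)$ transitions between hyperbolic and trigonometric regimes, and the apparent singularity of the denominator $\sqrt{|1/4-|\xi|^2|}$ must be resolved by observing that $\sinh(tx)/x\to t$ and $\sin(tx)/x\to t$ as $x\to 0$, so the symbol stays smooth. All other steps are routine parabolic Young/Bernstein arguments, made cleanly available by the smoothness and compact frequency support enforced by $P_{\leq 1}$.
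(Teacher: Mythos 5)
Note first that the present paper does not actually prove this lemma: it is quoted verbatim from \cite[Lemma 2.3]{Inup}, so there is no in-paper proof to compare against. Your sketch reproduces the standard argument used in that reference. The Matsumura-type $L^{\tilde r'}$--$L^r$ decay bound for the low-frequency propagator via the splitting $e^{-t/2}L(t,\xi)=(e^{-t\mu_-(\xi)}-e^{-t\mu_+(\xi)})/(2\sqrt{1/4-|\xi|^2})$, with the heat-like branch $\mu_-(\xi)\sim|\xi|^2$, exponential decay on $1/4\lesssim|\xi|\lesssim 1$, and Bernstein to pass from $L^{\tilde r'}$ to $L^r$, is exactly the right first step; so is the dichotomy Young's inequality (strict inequality case) versus the one-dimensional Hardy--Littlewood--Sobolev inequality (scaling-critical case), where the hypothesis $1<\tilde q'<q<\infty$ is precisely the non-endpoint HLS condition and also guarantees $0<\beta<1$. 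Your remarks on the removable singularity at $|\xi|=1/2$ and on $\partial_t\cD$ are correct.

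One point you gloss over: Young's convolution inequality requires the auxiliary exponent $\sigma$ with $1/\sigma=1/q+1/\tilde q$ to satisfy $\sigma\ge 1$. When $1/q+1/\tilde q>1$ that step breaks down, and in fact the asserted estimate itself fails on unbounded $I$ in that range: take $d\ge 5$, $q=\tilde q=1$, $r=\tilde r=\infty$ (so the strict scaling inequality $d/2>2$ holds) and $F(s,x)=f(x)$ with $\hat f\ge 0$, $\hat f(0)>0$; then the Duhamel term at $x=0$ is bounded below for large $t$, so its $L^1_t L^\infty_x$-norm diverges while $\norm{F}_{L^\infty_t L^1_x}<\infty$. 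This is an over-generous range of exponents in the quoted statement rather than a flaw in your method, and it is invisible in every application in the paper (which only uses $q,\tilde q\ge 2$), but a complete write-up should restrict to $1/q+1/\tilde q\le 1$ or explain how that corner is handled.
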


Therefore, the endpoint case holds for the low frequency part. It is enough to consider the high frequency part. 
We apply the method of Keel and Tao \cite{KeTa98} to prove the endpoint Strichartz estimate for the high frequency part. In this section we use $N \in 2^{\Z}$ to denote the dyadic numbers for spatial variables in order to distinguish dyadic numbers for time variable, which are denoted by $2^l$, and them. We set $P_N = \Delta_{j}$ for $N=2^j$. $P_{>N}$, $P_{\leq N}$ etc. are defined in the same way.

\subsection{Stationary phase method}
\label{sec2.1}

First, we give an $L^{\infty}$-$L^{1}$ estimate for the high frequency part of the linear solution.

\begin{lemma}
The following estimate is valid for $t>0$. 
\begin{align}
\label{eq2.1}
	\norm{ e^{it\sqrt{-\Delta-1/4}} P_{>1} P_{N}  f }_{ L^{\infty} }
	\cleq ( 1 + t N )^{ -\frac{d-1}{2} } N^{d} \norm{ P_{N} f }_{ L^{1} }.
\end{align}
\end{lemma}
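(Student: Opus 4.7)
The plan is to establish (\ref{eq2.1}) as a dispersive estimate on the convolution kernel of the multiplier. Writing
\begin{align*}
e^{it\sqrt{-\Delta-1/4}} P_{>1} P_N f = K_{t,N} \ast P_N f,
\qquad
K_{t,N}(x) = c_d \int_{\R^d} e^{i(x\cdot\xi + t\sqrt{|\xi|^2 - 1/4})}\, m_N(\xi)\, d\xi,
\end{align*}
with $m_N(\xi) := \chi_{>1}(\xi)\widetilde\chi_{2^j}(\xi)$ (where $N = 2^j$) supported on $\{|\xi|\sim N\}\cap\{|\xi|>1\}$, which forces $N \gtrsim 1$, it is enough by Young's inequality to prove $\|K_{t,N}\|_{L^\infty} \cleq N^{d}(1+tN)^{-(d-1)/2}$. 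Note that on the support of $m_N$ the phase $\sqrt{|\xi|^2 - 1/4}$ is smooth and comparable, together with its derivatives, to those of $|\xi|$.

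In the short-time regime $tN \cleq 1$, the crude bound $|K_{t,N}(x)| \leq \|m_N\|_{L^1} \cleq N^d$ suffices. For $tN \gg 1$, I rescale $\xi = N\eta$ to reduce to a unit-scale oscillatory integral:
\begin{align*}
K_{t,N}(x) = c_d N^d \int_{\R^d} e^{i\lambda \Phi(\eta)}\, a(\eta)\, d\eta,
\qquad
\lambda := tN,
\qquad
\Phi(\eta) := \frac{x}{t}\cdot\eta + \sqrt{|\eta|^2 - (2N)^{-2}},
\end{align*}
where $a$ is a smooth bump supported in $\{|\eta| \sim 1\}$. Since $N \gtrsim 1$, the correction $(2N)^{-2}$ is a bounded smooth perturbation of the wave phase $|\eta|$, and $\Phi$ has the same qualitative stationary-phase features, with all the relevant constants controlled uniformly in $N$.

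Now I carry out the standard stationary phase analysis. Outside a neighbourhood of the critical set $\{\nabla\Phi = 0\}$, one has $|\nabla\Phi| \gtrsim 1$ and repeated integration by parts against $e^{i\lambda\Phi}$ gives $O(\lambda^{-M})$ for any $M$. Near the critical set, the Hessian of $\Phi$ has rank $d-1$ with a single radial null direction, and its nonzero eigenvalues remain bounded away from zero uniformly in $N \gtrsim 1$, arising from the non-vanishing Gaussian curvature of the co-sphere $\{|\eta| = 1\}$ (perturbed by an $O(N^{-2})$ term). Applying stationary phase in the $d-1$ non-degenerate directions together with a van der Corput estimate in the remaining radial direction (or equivalently Littman's lemma for oscillatory integrals over hypersurfaces with non-vanishing curvature) yields the decay $\lambda^{-(d-1)/2} = (tN)^{-(d-1)/2}$. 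Combining this with the $N^d$ Jacobian factor produces (\ref{eq2.1}). The main obstacle is verifying that the Hessian and curvature bounds are uniform in $N$; this succeeds precisely because the cutoff $\chi_{>1}$ confines us to $N \gtrsim 1$, where the perturbation $(2N)^{-2}$ is harmless and $\Phi$ is a smooth, uniformly non-degenerate deformation of the standard wave dispersion.
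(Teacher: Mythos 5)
Your proposal is correct and follows essentially the same route as the paper: reduce to a kernel bound via Young's inequality, use the crude $N^d$ bound when $tN\lesssim 1$, and for $tN\gg 1$ exploit that the cutoff $\chi_{>1}$ forces $N\gtrsim 1$, so that $\sqrt{|\xi|^2-1/4}$ is a uniformly controlled perturbation of the wave phase and the curvature of the sphere yields the factor $(tN)^{-(d-1)/2}$ (the paper implements this in polar coordinates, using the decay $|\check{\sigma}(y)|\lesssim \langle y\rangle^{-(d-1)/2}$ when $|x|\gtrsim t$ and radial integration by parts when $|x|\ll t$). One caution: the case split should be organized by $|x|\gtrsim t$ versus $|x|\ll t$ (or $|x|/t$ near $1$ versus not) rather than by distance to the exact critical set, since for the perturbed phase $|\nabla\Phi|$ can be as small as $O(N^{-2})$ at points where it does not vanish (e.g.\ when $|x|=t$ there is no critical point at all, yet $|\nabla\Phi|\sim N^{-2}$ along the ray $\eta\parallel -x$), so the assertion that $|\nabla\Phi|\gtrsim 1$ off a neighbourhood of the critical set is not literally true; your alternative via Littman's lemma, which needs no localization to the critical set, is the robust way to close this regime and is what the paper's use of $\check{\sigma}$ amounts to.
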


\begin{proof}
The proof is very similar to the $L^{\infty}$-$L^{1}$ estimate for the wave equation. See e.g. \cite[Lemma 2.1]{KTV14}. However, we give a proof for the reader's convenience. 
We only treat the case of $N \geq 1$ since $P_{N}P_{>1}=0$ when $N \leq 1/2$. 
Since $P_{N} \widetilde{P}_{N} = 1$, it follows that
\begin{align*}
	e^{it\sqrt{-\Delta-1/4}} P_{>1} P_{N}  f 
	= \{  e^{it\sqrt{|\cdot|^2-1/4}} \chi_{>1} \widetilde{\chi}_{N}  \}^{\vee} * (P_{N} f).
\end{align*}
It is enough to show 
\begin{align*}
	\l| \int_{\R^d} e^{ix\cdot \xi} e^{it\sqrt{|\xi|^2-1/4}} \chi_{>1}(\xi) \widetilde{\chi}_{N}(\xi) d\xi \r|
	\cleq ( 1 + t N )^{ -\frac{d-1}{2} } N^{d}
\end{align*}
by the Young inequality. 
We set 
\begin{align}
	\notag
	I 
	&:=  \int_{\R^d} e^{ix\cdot \xi} e^{it\sqrt{|\xi|^{2}-1/4}} \chi_{>1}(\xi) \widetilde{\chi}_{N}(\xi) d\xi
	\\
	\label{eq2.2}
	&= N^{d} \int_{\S^{d-1}} \int_{0}^{\infty}   e^{ it\sqrt{(Nr)^{2}-1/4}+iNxr\omega } \chi_{>1}(Nr) \widetilde{\chi}_{1} (r) r^{d-1} dr d\sigma(\omega)
	\\
	\label{eq2.3}
	&=N^{d} \int_{0}^{\infty}   e^{ it\sqrt{(Nr)^{2}-1/4} }
		\check{\sigma} (Nrx) \chi_{>1}(Nr) \widetilde{\chi}_{1}(r) r^{d-1} dr,
\end{align}
where
$\check{\sigma}(x)=(2\pi)^{-\frac{d}{2}} \int_{\S^{d-1}} e^{i|x|\omega_d} d\sigma(\omega)$.

{\bf Case 1.} First, we assume ``$d=1$" or ``$d \geq 2$ and $t \cleq N^{-1}$". By \eqref{eq2.2}, we have
\begin{align*}
	|I| \leq N^d
\end{align*}
When ``$d=1$" or ``$d \geq 2$ and $|t| \leq N^{-1}$", we have $( 1 + t N )^{ -\frac{d-1}{2} } \cgeq 1$. Thus, we obtain the desired estimate.

{\bf Case 2.} We assume that $d \geq 2$ and $t \gg N^{-1}$. 

{\bf Case 2-1.} We consider the case of $|x| \cgeq t$. Since $|\check{\sigma}(x)| \cleq \jbra{x}^{-\frac{d-1}{2}}$ (see \cite[Lemma 2.2]{KTV14}), it follows from \eqref{eq2.3}, $|x| \cgeq t$, $\chi_{>1} \leq 1$, and $t \gg N^{-1}$ that  
\begin{align*}
	|I|
	&\cleq N^{d} \int_{0}^{\infty}   
		\jbra{Nr|x|}^{-\frac{d-1}{2}} \chi_{>1}(Nr) \widetilde{\chi}_{1}(r) r^{d-1} dr
	\\
	&\cleq N^{\frac{d+1}{2}} |x|^{-\frac{d-1}{2}} \int_{0}^{\infty} \chi_{>1}(Nr) \widetilde{\chi}_{1}(r) r^{d-1} dr
	\\
	&\cleq N^{\frac{d+1}{2}} t^{-\frac{d-1}{2}} 
	\\
	&\cleq ( 1 + t N )^{ -\frac{d-1}{2} } N^{d}.
\end{align*}

{\bf Case 2-2.} We consider $|x| \ll t$. We use the formula in \eqref{eq2.2}. Let $\phi(r):= t\sqrt{(Nr)^{2}-1/4} + Nxr\omega$. Then, we have $\phi'(r)=tN^{2}r / \sqrt{(Nr)^{2}-1/4} + Nx\omega$ and thus $|\phi'(r)| \cgeq Nt $ since $t \gg |x|$. When $N \geq 2^{2}$, by $ \chi_{>1}(Nr) \widetilde{\chi}_{1} (r)= \widetilde{\chi}_{1} (r)$, $e^{i\phi(r)}= (i\phi'(r))^{-1} \partial_r e^{i\phi(r)}$ and the integration by parts $k$ times, we get
\begin{align*}
	|I|
	\cleq N^{d} (Nt)^{-k}
	\cleq N^{\frac{d+1}{2}} t^{-\frac{d-1}{2}},
\end{align*}
where we take $k=(d-1)/2$ if $d$ is odd and $k=d/2$ if $d$ is even and we use $t \gg N^{-1}$. When $N=1,2$, by the integration by parts $k$ times, in the same way as above, we obtain
\begin{align*}
	|I|
	\leq C_{N} t^{-\frac{d-1}{2}}
	\leq \max_{ N=1,2 }\{C_{N}\}  t^{-\frac{d-1}{2}}.
\end{align*}
Therefore, for $N \geq 1$, we get
\begin{align*}
	|I|
	\cleq N^{\frac{d+1}{2}} t^{-\frac{d-1}{2}}
	\cleq ( 1 + t N )^{ -\frac{d-1}{2} } N^{d}
\end{align*}
since $t \gg N^{-1}$. This completes the proof. 
\end{proof}

Combining \eqref{eq2.1} and 
\begin{align*}
	\norm{ e^{it\sqrt{-\Delta-1/4}} P_{>1} P_{N}  f }_{ L^{2} }
	=  \norm{ P_{N} f }_{ L^{2} },
\end{align*}
we get the following $L^{r}$-$L^{r'}$ estimate by the interpolation.

\begin{corollary}
\label{cor2.2}
Let $2 \leq r \leq \infty$. We have
\begin{align*}
	\norm{ e^{it\sqrt{-\Delta-1/4}} P_{>1} P_{N}  f }_{ L^{r} }
	\cleq ( 1 + t N )^{ -\frac{(d-1)(r-2)}{2r} } N^{ \frac{d(r-2)}{r} } \norm{ P_{N} f }_{ L^{r'} },
\end{align*}
for $t>0$.
\end{corollary}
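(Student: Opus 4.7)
The plan is to obtain the estimate by Riesz--Thorin complex interpolation between the $L^1 \to L^\infty$ bound \eqref{eq2.1} and the $L^2 \to L^2$ bound displayed immediately before the corollary. The $L^2$-bound itself is immediate from Plancherel's theorem: on the support of $\chi_{>1}$ one has $|\xi| > 1 > 1/2$, so $\sqrt{|\xi|^2 - 1/4}$ is real and the multiplier $e^{it\sqrt{|\xi|^2-1/4}}\chi_{>1}(\xi)$ has modulus bounded by $1$, giving an $L^2$-operator bound of $1$.

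Concretely I would view $T_N g := \cF^{-1}\bigl[e^{it\sqrt{|\cdot|^2-1/4}}\chi_{>1}\widetilde{\chi}_N\bigr] * g$ as a convolution operator, so that $e^{it\sqrt{-\Delta - 1/4}} P_{>1} P_N f = T_N (P_N f)$ via $\widetilde{P}_N P_N = P_N$. The computation in the proof of \eqref{eq2.1} in fact bounds the $L^\infty$ norm of the kernel of $T_N$, yielding by Young's inequality $\norm{T_N}_{L^1 \to L^\infty} \cleq (1+tN)^{-\frac{d-1}{2}} N^d$, while Plancherel gives $\norm{T_N}_{L^2 \to L^2} \leq 1$. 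Applying Riesz--Thorin with interpolation parameter $\theta = 2/r$, so that $\frac{1}{r} = \frac{\theta}{2}$ and $\frac{1}{r'} = (1-\theta) + \frac{\theta}{2}$, leads to
\[
\norm{T_N g}_{L^r} \cleq \bigl[(1+tN)^{-\frac{d-1}{2}} N^d\bigr]^{1-\theta} \norm{g}_{L^{r'}}.
\]
Substituting $g = P_N f$ and using $1-\theta = (r-2)/r$ produces exactly the claimed inequality.

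There is essentially no analytic obstacle: once the two endpoint bounds are stated as operator norms of the same convolution $T_N$, the result is a one-line application of Riesz--Thorin. The only bookkeeping is to verify the exponent algebra, namely $1 - \theta = (r-2)/r$ for $\theta = 2/r$, which produces the exponents $-\frac{(d-1)(r-2)}{2r}$ in the time-decay factor and $\frac{d(r-2)}{r}$ in the frequency weight of the corollary.
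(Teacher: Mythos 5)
Your proposal is correct and is exactly the paper's argument: the corollary is obtained by Riesz--Thorin interpolation between the $L^{1}\to L^{\infty}$ bound \eqref{eq2.1} and the trivial $L^{2}\to L^{2}$ bound, and your exponent bookkeeping ($\theta=2/r$, $1-\theta=(r-2)/r$) reproduces the stated decay and frequency factors. Your phrasing of the $L^2$ step as an operator bound $\le 1$ is in fact slightly more careful than the paper's stated equality, since $\chi_{>1}$ is a smooth cutoff rather than an indicator.
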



\subsection{Bilinear estimate and the proof of the endpoint Strichartz estimates}
\label{sec2.2}

We estimate the space-time norm of the inhomogeneous term:
\begin{align*}
	J:=\norm{ \int_{0}^{\infty} e^{-\frac{t-s}{2}} e^{i(t-s)\sqrt{-\Delta-1/4}} P_{>1} P_{N} F(s) ds }_{L^{q}(I:L^{r}(\R^d))}.
\end{align*}
If $N\leq 1/2$, $J=0$ since $P_{>1} P_{N}=0$. Thus, we only consider the case of $N \geq 1$. Here, we only treat the case of $N \geq 2$. It follows from the duality that 
\begin{align*}
	J
	&= \norm{ \int_{0}^{\infty} e^{-\frac{t-s}{2}} e^{i(t-s)\sqrt{-\Delta-1/4}} P_{>1} \widetilde{P}_{N} P_{N} F(s) ds }_{L^{q}(I:L^{r}(\R^d))}
	\\
	&= \sup_{ \norm{G}_{ L_{t}^{q'} L_{x}^{r'} } = 1 }
	\l| \int_{0}^{\infty} \int_{\R^d} \int_{0}^{\tau}  e^{-\frac{\tau-s}{2}} e^{i(\tau-s)\sqrt{-\Delta-1/4}} P_{>1} \widetilde{P}_{N} P_{N} F(s,x) \overline{G(\tau,x)} ds dx d\tau \r|
	\\
	&= \sup_{ \norm{G}_{ L_{t}^{q'} L_{x}^{r'} } = 1 } |T(F,G)|,
\end{align*}
where we set
\begin{align*}
	T(F,G) 
	&:= \int_{0}^{\infty} \int_{0}^{\tau}  e^{-\frac{\tau-s}{2}}
	\tbra{ e^{-is\sqrt{-\Delta-1/4}} P_{N} F(s) }{  e^{-i\tau \sqrt{-\Delta-1/4}} P_{>1} \widetilde{P}_{N}  G(\tau) }_{ L_{x}^{2} }
	ds  d\tau.
\end{align*}
Set $\cW(t):=e^{it\sqrt{-\Delta-1/4}} P_{>1}$ for simplicity. Since we have $P_{N}=P_{>1}P_{N}$ from $N \geq 2$, it follows that
\begin{align*}
	T(F,G) 
	= \int_{0}^{\infty} \int_{0}^{\tau}  e^{-\frac{\tau-s}{2}}
	\tbra{ \cW^{*} (s)P_{N} F(s) }{  \cW^{*} (\tau) \widetilde{P}_{N}  G(\tau) }_{ L_{x}^{2} }
	ds  d\tau,
\end{align*}
where $\cW^{*}$ denotes the conjugate of $\cW$. We set 
\begin{align*}
	I(\tau,s):= e^{-\frac{\tau-s}{2}}
	\tbra{ \cW^{*} (s)P_{N} F(s) }{  \cW^{*} (\tau) \widetilde{P}_{N}  G(\tau) }_{ L_{x}^{2} },
\end{align*}
for $\tau \geq s$. 

\begin{lemma}
\label{lem2.3}
For $r \in [2,\infty]$, we have
\begin{align*}
	| I(\tau,s) |
	\cleq e^{ -\frac{\tau-s}{2} }  \{1+(\tau-s)N\}^{ -\frac{(d-1)(r-2)}{2r} }   N^{ \frac{d(r-2)}{r} }  \norm{ P_{N}F(s) }_{L^{r'}} \norm{ \widetilde{P}_{N}G(\tau) }_{L^{r'}},
\end{align*}
for $\tau \geq s$.
\end{lemma}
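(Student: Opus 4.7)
The plan is to exploit the bilinear structure of $I(\tau,s)$ and reduce it to a single application of the dispersive propagator via an adjoint manipulation, then close with the pointwise dispersive bound already proved in Corollary \ref{cor2.2}. First I would peel off the scalar factor $e^{-(\tau-s)/2}$ (which is harmless and will be kept exactly as in the statement). Then, using the adjoint identity
\begin{align*}
\tbra{\cW^*(s) u}{\cW^*(\tau) v}_{L^2} = \tbra{\cW(\tau)\cW^*(s) u}{v}_{L^2}
\end{align*}
together with the computation $\cW(\tau)\cW^*(s) = e^{i(\tau-s)\sqrt{-\Delta-1/4}} P_{>1}^2$, rewrite
\begin{align*}
I(\tau,s) = e^{-(\tau-s)/2} \tbra{ e^{i(\tau-s)\sqrt{-\Delta-1/4}} P_{>1}^2 P_N F(s) }{ \widetilde{P}_N G(\tau) }_{L_x^2}.
\end{align*}

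Next, I would apply H\"older's inequality in $x$ with exponents $r$ and $r'$ to obtain
\begin{align*}
|I(\tau,s)| \leq e^{-(\tau-s)/2} \norm{e^{i(\tau-s)\sqrt{-\Delta-1/4}} P_{>1}^2 P_N F(s)}_{L^r} \norm{\widetilde{P}_N G(\tau)}_{L^{r'}}.
\end{align*}
Since we have assumed $N \geq 2$, the Fourier support of $P_N$ lies in $\{|\xi| \geq N/2 \geq 1\}$, where $\chi_{>1}$ equals one, so $P_{>1} P_N = P_N$ and the second copy of $P_{>1}$ is absorbed. Thus the first factor is exactly $\norm{e^{i(\tau-s)\sqrt{-\Delta-1/4}} P_{>1} P_N F(s)}_{L^r}$, which by Corollary \ref{cor2.2} (applied with $t = \tau - s \geq 0$) is bounded by
\begin{align*}
(1 + (\tau-s)N)^{-\frac{(d-1)(r-2)}{2r}} N^{\frac{d(r-2)}{r}} \norm{P_N F(s)}_{L^{r'}},
\end{align*}
yielding the claimed estimate.

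There is no essential obstacle here: the argument is a direct combination of adjoint duality, H\"older's inequality, and the dispersive decay from Corollary \ref{cor2.2}. The only delicate point is checking that the doubled cutoff $P_{>1}^2$ produced by composing $\cW(\tau)$ with $\cW^*(s)$ acts trivially when paired with a frequency-$N$ projector for $N \geq 2$, so that Corollary \ref{cor2.2} applies without modification. The exponential factor $e^{-(\tau-s)/2}$ plays no role in this lemma itself but will be crucial when integrating in $s$ and $\tau$ to obtain the bilinear Strichartz bound that follows.
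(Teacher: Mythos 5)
Your proof is correct and is essentially the paper's own argument: the paper likewise rewrites the pairing as $e^{-(\tau-s)/2}\langle \cW(\tau-s)P_N F(s), \widetilde{P}_N G(\tau)\rangle$, applies H\"older in $x$, and invokes Corollary \ref{cor2.2}. Your extra care about the doubled cutoff $P_{>1}^2$ (absorbed since $P_{>1}$ acts as the identity on the frequency support of $P_N$ for $N\geq 2$, or alternatively since $P_{>1}$ is bounded on $L^{r'}$) is a detail the paper leaves implicit.
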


\begin{proof}
By the H\"{o}lder inequality and Corollary \ref{cor2.2}, we have
\begin{align*}
	| I(\tau,s) |
	&\cleq e^{ -\frac{\tau-s}{2} }   \norm{ \cW (\tau-s) P_{N} F(s) }_{L^{r}} \norm{ \widetilde{P}_{N}  G(\tau) }_{L^{r'}}
	\\ \notag
	&\cleq e^{ -\frac{\tau-s}{2} }  
	\{ 1 + (\tau-s) N \}^{ -\frac{(d-1)(r-2)}{2r} } N^{ \frac{d(r-2)}{r} } \norm{ P_{N} F(s)  }_{ L^{r'} } \norm{ \widetilde{P}_{N}  G(\tau) }_{L^{r'}},
\end{align*}
for $\tau \geq s$. 
\end{proof}

From this, we get the following bilinear estimate for non-endpoint admissible pairs. 

\begin{lemma}\label{lem_bilin_est_nonadm}
Let $q \in [2,\infty]$, $r \in [2,\infty]$, and $(q,r) \neq (2,2(d-1)/(d-3))$. 
We have
\begin{align*}
	|T(F,G)|
	\cleq N^{2\gamma} \norm{ P_{N}  F }_{L^{q'}(I:L^{r'}(\R^d))} \norm{ \widetilde{P}_{N}  G }_{L^{q'}(I:L^{r'}(\R^d))}
\end{align*}
\end{lemma}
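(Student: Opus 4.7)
The strategy is to combine the pointwise estimate of Lemma~\ref{lem2.3} with Young's convolution inequality in the time variable. Setting $a(s) := \| P_N F(s) \|_{L^{r'}}$, $b(\tau) := \| \widetilde P_N G(\tau) \|_{L^{r'}}$ and $\alpha := \tfrac{d-1}{2}\bigl(1-\tfrac{2}{r}\bigr)$, Lemma~\ref{lem2.3} yields
\begin{align*}
|T(F,G)| \cleq N^{2d(\frac{1}{2} - \frac{1}{r})} \iint_{0 \leq s \leq \tau} K(\tau - s)\, a(s)\, b(\tau)\, ds\, d\tau,
\end{align*}
where $K(t) := e^{-t/2}(1+tN)^{-\alpha}\mathbf{1}_{[0,\infty)}(t)$.

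The plan is then to estimate the bilinear integral by H\"older's inequality in $\tau$ followed by Young's inequality $L^{q/2} \ast L^{q'} \hookrightarrow L^q$ (applicable since $q \geq 2$ gives $q/2 \geq 1$), producing the factor $\|K\|_{L^{q/2}}\, \|a\|_{L^{q'}}\, \|b\|_{L^{q'}}$. The whole argument thus reduces to evaluating $\|K\|_{L^{q/2}}$. A dyadic decomposition of $t \in (0,\infty)$ into $2^l \leq t < 2^{l+1}$, $l \in \Z$, reveals three regimes: (i) $2^l \cleq N^{-1}$, where $K \sim 1$; (ii) $N^{-1} \cleq 2^l \cleq 1$, where $K \sim (2^l N)^{-\alpha}$; and (iii) $2^l \cgeq 1$, where $e^{-2^l/2}$ enforces very rapid decay. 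Computing the $L^{q/2}$-contribution in each regime and summing (each sum being a convergent geometric series away from the critical exponent) yields
\begin{align*}
\|K\|_{L^{q/2}} \cleq N^{-\min\{2/q,\, \alpha\}}, \qquad N \geq 2.
\end{align*}

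Finally, a short case analysis comparing $\alpha$ with $2/q$ verifies the identity
\begin{align*}
2d\Bigl(\tfrac{1}{2} - \tfrac{1}{r}\Bigr) - \min\Bigl\{\tfrac{2}{q},\, \alpha\Bigr\}
= 2\max\Bigl\{d\bigl(\tfrac{1}{2} - \tfrac{1}{r}\bigr) - \tfrac{1}{q},\ \tfrac{d+1}{2}\bigl(\tfrac{1}{2} - \tfrac{1}{r}\bigr)\Bigr\} = 2\gamma,
\end{align*}
so that combining with the prefactor $N^{2d(1/2-1/r)}$ produces the desired $N^{2\gamma}$.

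The one obstacle is the critical equality $\alpha = 2/q$, which pinpoints exactly the excluded wave endpoint $(q,r) = (2, 2(d-1)/(d-3))$: in that case regime (ii) produces an extra $\log N$ that destroys the required power. This is the reason the endpoint must be excluded in this preparatory lemma; it will be recovered later through a refined bilinear interpolation argument in the spirit of Keel--Tao~\cite{KeTa98}.
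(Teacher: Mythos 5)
Your reduction to $\|K\|_{L^{q/2}}$ via H\"older in $\tau$ followed by Young's inequality $L^{q/2}\ast L^{q'}\hookrightarrow L^{q}$ is exactly the paper's first step, and your exponent bookkeeping $2d(\tfrac12-\tfrac1r)-\min\{\tfrac2q,\alpha\}=2\gamma$ is correct. The gap is your final claim that the critical equality $\alpha=2/q$ ``pinpoints exactly the excluded wave endpoint.'' It does not: $\alpha=2/q$ is equivalent to $\frac{1}{q}=\frac{d-1}{2}\left(\frac12-\frac1r\right)$, which is the entire sharp wave-admissible line; the endpoint is only the single point on it with $q=2$. For every pair on this line with $2<q<\infty$ (e.g.\ $(q,r)=(4,3)$ when $d=4$) --- pairs that the lemma explicitly includes and that are needed downstream, e.g.\ for Lemma \ref{lem_str_est_high} --- your regime (ii) gives $\int_{N^{-1}}^{1}(tN)^{-\alpha q/2}\,dt\sim N^{-1}\log N$, hence $\|K\|_{L^{q/2}}\lesssim N^{-2/q}(\log N)^{2/q}$, and your argument only yields $N^{2\gamma}(\log N)^{2/q}$ instead of $N^{2\gamma}$. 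The exponential factor $e^{-t/2}$ cannot remove this loss, since it is comparable to $1$ on the region $N^{-1}\lesssim t\lesssim 1$ that produces the logarithm.

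The paper closes precisely this case by switching from Young to the Hardy--Littlewood--Sobolev inequality on the critical line: dropping the exponential and bounding $\{1+(\tau-s)N\}^{-\alpha}\le N^{-\alpha}|\tau-s|^{-2/q}$, HLS gives $\bigl\| |\cdot|^{-2/q}\ast a\bigr\|_{L^{q}}\lesssim\|a\|_{L^{q'}}$ exactly when $0<2/q<1$, i.e.\ $q>2$; this fails only at $q=2$, which is the one genuinely excluded endpoint. (The degenerate case $q=\infty$, $r=2$ is trivial since then $K=e^{-t/2}\in L^{\infty}$.) With that substitution in the critical cases, your argument becomes a complete proof and coincides with the paper's.
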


\begin{proof}
From Lemma \ref{lem2.3} and the H\"{o}lder inequality for $\tau$, we obtain
\begin{align*}
	&|T(F,G)|
	\\
	&\leq  \int_{0}^{\infty} \int_{0}^{\tau}  |I(\tau,s)| ds d\tau
	\\
	&\cleq   \int_{0}^{\infty} \int_{0}^{\tau}  e^{ -\frac{\tau-s}{2} }  \{1+(\tau-s)N\}^{ -\frac{(d-1)(r-2)}{2r} }   N^{ \frac{d(r-2)}{r} }  \norm{ P_{N}F(s) }_{L^{r'}} \norm{ \widetilde{P}_{N}G(\tau) }_{L^{r'}}  ds d\tau
	\\
	&\cleq  N^{ \frac{d(r-2)}{r} }  \norm{ \int_{0}^{\tau}  e^{ -\frac{\tau-s}{2} }  \{1+(\tau-s)N\}^{ -\frac{(d-1)(r-2)}{2r} } \norm{ P_{N}F(s) }_{L^{r'}}  ds }_{L^{q}}
	 \norm{ \widetilde{P}_{N}G}_{L^{q'}L^{r'}}
	\\
	&\cleq N^{2\gamma} \norm{ P_{N} F }_{L_{t}^{q'}L_{x}^{r'}} \norm{ \widetilde{P}_{N} G }_{L_{t}^{q'}L_{x}^{r'}},
\end{align*}
where we used the Young inequality or the Hardy--Littlewood--Sobolev inequality in the last. See \cite[Lemma 2.6]{Inup} for the detail. 
\end{proof}

For $l \in \Z$, we set 
\begin{align*}
	T_{l}(F,G) 
	&:= \int_{0}^{\infty} \int_{[0,\tau] \cap [\tau-2^{l+1},\tau-2^{l}]} I(\tau,s) ds  d\tau.
\end{align*}
Then, we have
\begin{align*}
	\sum_{l \in \Z} T_{l}(F,G)  = T(F,G).
\end{align*}
For simplicity, we set $K_{l}=K_{l}(\tau):=[0,\tau] \cap [\tau-2^{l+1},\tau-2^{l}]$. 
Let $d \geq 4$ and $(q^{*},r^{*}) = (2,2(d-1)/(d-3))$. 

\begin{lemma}
\label{lem2.5}
Let $2 \leq a , b \leq \infty$ and $\beta(a,b):= -1 + \frac{d-1}{2}(1-\frac{1}{a} -\frac{1}{b})$. For $(1/a,1/b)$ near $(1/r^{*},1/r^{*})$, the following is valid.
\begin{align*}
	|T_{l}(F,G)| \cleq 2^{-l\beta(a,b)} N^{\gamma(a)+\gamma(b)} \norm{ P_{N} F }_{L_{t}^{2}L_{x}^{a'}} \norm{ \widetilde{P}_{N}  G }_{L_{t}^{2}L_{x}^{b'}},
\end{align*}
where $\gamma(a)=d(1/2-1/a)-1/q(a)$ and $1/q(a)=\frac{d-1}{2}(1/2-1/a)$.
\end{lemma}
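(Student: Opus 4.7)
The approach follows the Keel--Tao bilinear scheme, modified to exploit the exponential damping factor $e^{-(\tau-s)/2}$. The argument has three stages: (i) an asymmetric pointwise dispersive bound on $|I(\tau,s)|$ obtained via bilinear complex interpolation; (ii) time integration over the dyadic window $K_l$, whose Lebesgue measure is $\lesssim 2^l$; and (iii) algebraic identification of the resulting exponents with $\beta(a,b)$ and $\gamma(a) + \gamma(b)$.

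For stage (i), Lemma~\ref{lem2.3} with $r \in [2,\infty]$ supplies the symmetric dispersive bound with kernel $(1+(\tau-s)N)^{-\frac{d-1}{2}(1-\frac{2}{r})}$. The asymmetric version needed here I would derive by bilinear complex interpolation on the spatial bilinear form $(F, G) \mapsto I(\tau, s)$ at fixed $(\tau, s)$: the diagonal endpoints at $(L^2_x, L^2_x)$ and $(L^1_x, L^1_x)$ come from Lemma~\ref{lem2.3} with $r = 2$ and $r = \infty$ respectively, while the off-diagonal endpoints at $(L^1_x, L^2_x)$ and $(L^2_x, L^1_x)$ follow by combining the $L^2$-boundedness of $\cW^*$ with Bernstein's inequality for $P_N$- and $\widetilde{P}_N$-localized functions (producing an $N^{d/2}$ factor without kernel decay). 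Within the small neighborhood of $(1/r^*, 1/r^*)$ the interpolation is admissible and yields
\[
|I(\tau,s)| \lesssim e^{-(\tau-s)/2} \bigl(1 + (\tau-s)N\bigr)^{-\frac{d-1}{2}(1-\frac{1}{a}-\frac{1}{b})} N^{d(1-\frac{1}{a}-\frac{1}{b})} \|P_N F(s)\|_{L^{a'}} \|\widetilde{P}_N G(\tau)\|_{L^{b'}}.
\]

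For stages (ii)--(iii), on $K_l$ we have $\tau - s \in [2^l, 2^{l+1}]$, so the kernel is bounded by $e^{-2^{l-1}}(1+2^l N)^{-\frac{d-1}{2}(1-\frac{1}{a}-\frac{1}{b})}$. Two applications of Cauchy--Schwarz in the double integral, using $|K_l(\tau)| \leq 2^l$ in $s$ for fixed $\tau$ and $|\{\tau : s \in K_l(\tau)\}| = 2^l$ in $\tau$ for fixed $s$, produce a factor $2^l$ together with the two $L^2_t$ norms. In the main regime $2^l N \gtrsim 1$, using $(1+2^l N) \sim 2^l N$, a direct computation with the definitions $\beta(a,b) = -1 + \frac{d-1}{2}(1 - \frac{1}{a} - \frac{1}{b})$ and $\gamma(a) = d(\frac{1}{2}-\frac{1}{a}) - \frac{1}{q(a)} = \frac{d+1}{2}(\frac{1}{2}-\frac{1}{a})$ yields the identity $2^l (2^l N)^{-\frac{d-1}{2}(1-\frac{1}{a}-\frac{1}{b})} N^{d(1-\frac{1}{a}-\frac{1}{b})} = 2^{-l\beta(a,b)} N^{\gamma(a)+\gamma(b)}$, as required; the complementary regime $2^l N \lesssim 1$ gives a strictly stronger bound via $(2^l N)^{\frac{d-1}{2}(1-\frac{1}{a}-\frac{1}{b})} \leq 1$. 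The principal technical hurdle is stage (i): the asymmetric kernel exponent $\frac{d-1}{2}(1-\frac{1}{a}-\frac{1}{b})$ is subtler than a direct Bernstein upgrade of Lemma~\ref{lem2.3} (which yields only the weaker $\frac{d-1}{2}(1 - 2/\min(a,b))$), so a genuine bilinear interpolation between all four corner estimates is essential, and the hypothesis ``$(1/a, 1/b)$ near $(1/r^*, 1/r^*)$'' is precisely what ensures the interpolation weights remain admissible.
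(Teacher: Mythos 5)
Your stages (ii)--(iii) are fine (the bookkeeping of the $2^l$ gain from the two Cauchy--Schwarz applications and the identification of the exponents with $\beta(a,b)$ and $\gamma(a)+\gamma(b)$ both check out), but stage (i) contains a genuine gap, and it is exactly the gap that the Keel--Tao scheme is designed to avoid. You claim that bilinear interpolation of the pointwise-in-time form $(F,G)\mapsto I(\tau,s)$ between the four corners $(L^2,L^2)$, $(L^1,L^1)$, $(L^1,L^2)$, $(L^2,L^1)$ yields the asymmetric kernel exponent $-\tfrac{d-1}{2}(1-\tfrac1a-\tfrac1b)$. It does not. Writing $(1/a,1/b)$ as a convex combination of the four corners with weights $\theta_{22},\theta_{\infty\infty},\theta_{\infty 2},\theta_{2\infty}$, the constraints $\theta_{22}+\theta_{2\infty}=2/a$, $\theta_{22}+\theta_{\infty 2}=2/b$, $\sum\theta=1$ force $\theta_{\infty\infty}=1-\tfrac2a-\tfrac2b+\theta_{22}\le 1-\tfrac{2}{\min(a,b)}$, with equality only when $\theta_{22}=\min(2/a,2/b)$; achieving $\theta_{\infty\infty}=1-\tfrac1a-\tfrac1b$ would require $\theta_{2\infty}=\tfrac1a-\tfrac1b$ and $\theta_{\infty2}=\tfrac1b-\tfrac1a$, one of which is negative unless $a=b$. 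So the interpolation you describe produces only the kernel decay $\tfrac{d-1}{2}(1-\tfrac{2}{\min(a,b)})$ that you yourself correctly identify as insufficient; the off-diagonal corners add nothing to the decay rate. There is no known (and, in the relevant range, no true) fixed-time asymmetric dispersive estimate with the averaged exponent, which is precisely why the endpoint case cannot be handled by inserting a pointwise bound into the time integral.

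The repair is to interpolate \emph{after} integrating over the dyadic window, i.e., to interpolate the bounds on the functional $T_l$ itself rather than on $I(\tau,s)$; this is what the paper does. Concretely, one proves the estimate of the Lemma at the corner $a=b=\infty$ by the pointwise dispersive bound of Lemma \ref{lem2.3} plus Cauchy--Schwarz on $K_l$ (your stage (ii) applied to that single corner), and on the two edges $\{b=2,\ 2\le a<r^*\}$ and $\{a=2,\ 2\le b<r^*\}$ by a different mechanism: Cauchy--Schwarz in $x$, the \emph{retarded non-endpoint Strichartz estimate} for $\int e^{-(\tau-s)/2}\cW^*(s)\1_{K_l}(s)P_NF(s)\,ds$ in $L^2_x$, and H\"older in time on the window of length $2^l$ to convert $L^{q(a)'}_s$ into $2^{l/\alpha}L^2_s$. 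A neighborhood of $(1/r^*,1/r^*)$ lies in the convex hull of $(0,0)$ and these two edges, and bilinear interpolation of the $T_l$ bounds (Keel--Tao Lemma 6.1 / the exercise in Bergh--L\"ofstr\"om) then gives the stated estimate with exponent $\beta(a,b)$ varying linearly, which is what the subsequent summation over $l$ requires. Without replacing your stage (i) by this structure, the argument does not close.
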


\begin{proof}
By the interpolation, it is enough to show the inequality  in the the following cases.
\begin{enumerate}
\item $a=b=\infty$.
\item $2 \leq a <r^{*}$ and $b=2$.
\item $2 \leq b < r^{*}$ and $a=2$. 
\end{enumerate}

(1). By Lemma \ref{lem2.3} as $r=\infty$, $\tau -s \ceq 2^{l}$ in $K_{l}$ and the Cauchy--Schwarz inequality, we get
\begin{align*}
	|T_{l}(F,G)|
	&\leq \int_{0}^{\infty} \int_{ K_{l} } |I(\tau,s)| ds  d\tau
	\\
	&\cleq \int_{0}^{\infty} \int_{ K_{l} }  \{ (\tau-s) N \}^{ -\frac{d-1}{2} } N^{d}  \norm{ P_{N} F(s) }_{L^{1}} \norm{ \widetilde{P}_{N}  G(\tau) }_{L^{1}} ds  d\tau
	\\
	&\cleq  2^{ -\frac{d-1}{2}l } N^{ \frac{d+1}{2} }  \int_{0}^{\infty} \int_{ K_{l} }  \norm{ P_{N} F(s) }_{L^{1}} ds \norm{ \widetilde{P}_{N}  G(\tau) }_{L^{1}}  d\tau
	\\
	&\cleq  2^{ -\frac{d-1}{2}l } N^{ 2\gamma(\infty)}   \norm{ H }_{L^{2}}  \norm{ \widetilde{P}_{N}  G }_{L_{t}^{2}L_{x}^{1}},
\end{align*}
where we set 
\begin{align*}
	H(\tau) :=  \int_{ K_{l}(\tau) }  \norm{ P_{N} F(s) }_{L^{1}} ds.
\end{align*}
Here, we have
\begin{align*}
	\norm{H}_{L^{2}}^{2} 
	&=  \int_{0}^{\infty} \l( \int_{ K_{l}(\tau) }  \norm{ P_{N} F(s) }_{L^{1}} ds \r)^{2} d\tau
	\\
	&\cleq 2^{l}  \int_{0}^{\infty}  \int_{ K_{l}(\tau) }  \norm{ P_{N} F(s) }_{L^{1}}^{2} ds  d\tau
	\\
	&= 2^{l}  \int_{0}^{\infty}  \int_{ [s,\infty] \cap [s+2^{l},s+2^{l+1}] } d\tau \norm{ P_{N} F(s) }_{L^{1}}^{2}  ds 
	\\
	&\cleq 2^{2l} \norm{ P_{N} F(s) }_{L_{t}^{2}L_{x}^{1}}^{2}.
\end{align*}
Therefore, we obtain 
\begin{align*}
	|T_{l}(F,G)|
	\cleq 2^{\l( -\frac{d-1}{2} +1\r)l} N^{2\gamma(\infty)} \norm{ P_{N} F(s) }_{L_{t}^{2}L_{x}^{1}} \norm{ \widetilde{P}_{N}  G }_{L_{t}^{2}L_{x}^{1}}.
\end{align*}

(2). We have
\begin{align*}
	&|T_{l}(F,G)|
	\\
	&= \l|  \int_{0}^{\infty} \int_{0}^{\infty} \1_{[\tau-2^{l+1},\tau-2^{l}]}(s) I(\tau,s) ds  d\tau \r|
	\\
	&= \l| \int_{0}^{\infty} \int_{0}^{\infty}
	e^{-\frac{\tau-s}{2}}
	\tbra{ \cW^{*} (s)  \1_{[\tau-2^{l+1},\tau-2^{l}]}(s) P_{N} F(s) }{  \cW^{*} (\tau) \widetilde{P}_{N}  G(\tau) }_{ L_{x}^{2} }
	ds  d\tau \r|
	\\
	&\leq  \l| \int_{0}^{\infty}
	\tbra{  \int_{0}^{\infty} e^{-\frac{\tau-s}{2}} \cW^{*} (s)  \1_{[\tau-2^{l+1},\tau-2^{l}]}(s) P_{N} F(s) ds }{  \cW^{*} (\tau) \widetilde{P}_{N}  G(\tau) }_{ L_{x}^{2} }
	d\tau \r|
	\\
	&\leq \int_{0}^{\infty}
	\norm{  \int_{0}^{\infty} e^{-\frac{\tau-s}{2}} \cW^{*} (s)  \1_{[\tau-2^{l+1},\tau-2^{l}]}(s) P_{N} F(s) ds }_{L^{2}}
	\norm{  \widetilde{P}_{N}  G(\tau) }_{ L_{x}^{2} }
	d\tau,
\end{align*}
where we use the Cauchy--Schwarz inequality in the last inequality. Here, by the Strichartz estimates for non-endpoint admissible pair, we get
\begin{align}
\label{eq2.4}
	&\norm{  \int_{0}^{\infty} e^{-\frac{\tau-s}{2}} \cW^{*} (s)  \1_{[\tau-2^{l+1},\tau-2^{l}]}(s) P_{N} F(s) ds }_{L^{2}}
	\\ \notag
	&\cleq N^{\gamma(a)} \norm{ \1_{[\tau-2^{l+1},\tau-2^{l}]} P_{N} F }_{L_{s}^{q(a)'}L_{x}^{a'}},
\end{align}
where $(q(a),a)$ is the non-endpoint admissible pair and $\gamma(a):= d(1/2-1/a)-1/q(a)$ (see \cite[Lemma 2.7]{Inup}). Thus, by the Cauchy--Schwarz inequality, we get
\begin{align*}
	|T_{l}(F,G)|
	&\cleq N^{\gamma(a)}  \int_{0}^{\infty}
	 \norm{ \1_{[\tau-2^{l+1},\tau-2^{l}]} P_{N} F }_{L_{s}^{q(a)'}L_{x}^{a'}}
	\norm{  \widetilde{P}_{N}  G(\tau) }_{ L_{x}^{2} }
	d\tau
	\\
	&\cleq  N^{\gamma(a)}  \l( \int_{0}^{\infty} \norm{ \1_{[\tau-2^{l+1},\tau-2^{l}]} P_{N} F }_{L_{s}^{q(a)'}L_{x}^{a'}}^{2} d\tau \r)^{1/2}
	\norm{  \widetilde{P}_{N}  G }_{ L_{t}^{2} L_{x}^{2} }
\end{align*}
Let $\alpha$ satisfy $1/2+1/\alpha=1/q(a)'$. Then, by the H\"{o}lder inequality, we obtain
\begin{align*}
	&\l( \int_{0}^{\infty} \norm{ \1_{[\tau-2^{l+1},\tau-2^{l}]} P_{N} F }_{L_{t}^{q(a)'}L_{x}^{a'}}^{2} d\tau \r)^{1/2}
	\\
	&\leq \l( \int_{0}^{\infty} \norm{ \1_{[\tau-2^{l+1},\tau-2^{l}]} }_{L_{s}^{\alpha}}^{2}  \norm{ \1_{[\tau-2^{l+1},\tau-2^{l}]} P_{N} F }_{L_{s}^{2}L_{x}^{a'}}^{2} d\tau \r)^{1/2}
	\\
	&\ceq 2^{\frac{1}{\alpha}l} \l( \int_{0}^{\infty} \int_{0}^{\infty}  \1_{[\tau-2^{l+1},\tau-2^{l}]}(s)\norm{ P_{N} F(s) }_{L_{x}^{a'}}^{2} ds d\tau \r)^{1/2}
	\\
	&= 2^{\frac{1}{\alpha}l} \l\{ \int_{0}^{\infty} \l( \int_{[s+2^{l},s+2^{l+1}]}   d\tau \r) \norm{ P_{N} F(s) }_{L_{x}^{a'}}^{2}  ds \r\}^{1/2}
	\\
	&\cleq 2^{\l( \frac{1}{\alpha}+\frac{1}{2} \r)l } \norm{ P_{N} F }_{L_{t}^{2}L_{x}^{a'}}
	\\
	&= 2^{-l\beta(a,2)} \norm{ P_{N} F }_{L_{t}^{2}L_{x}^{a'}}.
\end{align*}
Hence, we get
\begin{align*}
	|T_{l}(F,G)|
	\cleq N^{\gamma(a)+\gamma(2)} 2^{-l\beta(a,2)} \norm{ P_{N} F }_{L_{t}^{2}L_{x}^{a'}} \norm{  \widetilde{P}_{N}  G }_{ L_{t}^{2} L_{x}^{2} }.
\end{align*}

(3). By the symmetry, we get the statement in the same way as (2). 
\end{proof}

By Lemma \ref{lem2.5} and the interpolation lemma (see \cite[p.76, Exercises 5. (b)]{BeLo76} or Lemma 6.1 in \cite{KeTa98}),
we obtain the bilinear estimate in the endpoint case
$(q^*,r^*) = (2,2(d-1)/(d-3))$:
\begin{align*}
	|T(F,G)|
	\cleq N^{2\gamma} \norm{ P_{N}  F }_{L^{(q^*)'}(I:L^{(r^*)'}(\R^d))} \norm{ \widetilde{P}_{N}  G }_{L^{(q^*)'}(I:L^{(r^*)'}(\R^d))}
\end{align*}
for $N \geq 2$.
This part is completely the same as those of Keel--Tao \cite{KeTa98} and Machihara--Nakanishi--Ozawa \cite{MNO03},
and hence, we omit the details.
When $N=1$, the above estimate is true by the estimates in Lemma 2.5 and Remark 1.1. 
Combining these estimates with the Littlewood--Paley decomposition, and by Lemma \ref{lem_bilin_est_nonadm},
we summarize the Strichartz estimates for high frequency part:
\begin{lemma}\label{lem_str_est_high}
Let
$d \ge 2$.
Let
$2 \le r < \infty$ and $2 \le q \le \infty$.
We set
$\gamma := \max\{ d(1/2-1/r)-1/q, \frac{d+1}{2}(1/2-1/r)\}$.
Then, we have
\begin{align*}%
	\left\| \int_0^t \mathcal{D}(t-s) P_{>1} F(s) ds \right\|_{L^{q}(I; L^{r}(\mathbb{R}^d))}
		&\lesssim
		\left\| \langle \nabla \rangle^{2\gamma-1} F \right\|_{L^{q'}(I; L^{r'}(\mathbb{R}^d))}, \\
	\left\| \int_0^t \partial_t \mathcal{D}(t-s) P_{>1} F(s) ds \right\|_{L^{q}(I; L^{r}(\mathbb{R}^d))}
		&\lesssim
		\left\| \langle \nabla \rangle^{2\gamma} F \right\|_{L^{q'}(I; L^{r'}(\mathbb{R}^d))}.
\end{align*}%
\end{lemma}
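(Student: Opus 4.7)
My plan is to combine the bilinear estimates established earlier in this section (Lemmas \ref{lem_bilin_est_nonadm} and \ref{lem2.5}) with a Littlewood--Paley decomposition to obtain the full Sobolev-level Strichartz estimate. The starting observation is that for frequencies above $1$ the damped wave propagator factors as
\begin{align*}
\cD(t) P_{>1} = \frac{e^{-t/2}}{2i\sqrt{-\Delta-1/4}}\,\bigl(e^{it\sqrt{-\Delta-1/4}} - e^{-it\sqrt{-\Delta-1/4}}\bigr)\, P_{>1},
\end{align*}
so that $\cD(t) P_{>1}$ reduces to the half-wave propagators $\cW(\pm t)$ appearing in the bilinear form $T$, pre-composed with the multiplier $(\sqrt{-\Delta-1/4})^{-1}$. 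On each dyadic block $P_N$ with $N\geq 1$ the latter multiplier gains a factor $N^{-1}$ by Mikhlin--H\"ormander; this is the source of the extra $-1$ in the exponent $2\gamma-1$. For $\partial_t \cD(t) P_{>1}$ the analogous decomposition produces a harmless multiple of $\cD(t)P_{>1}$ together with the bare cosine-type half-wave propagators, \emph{without} the $|\nabla|^{-1}$ gain, which explains the weight $\jbra{\nabla}^{2\gamma}$ in the second bound.

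Next, combining Lemma \ref{lem2.5} with Lemma \ref{lem_bilin_est_nonadm} via the Keel--Tao bilinear real-interpolation argument (the abstract Lemma 6.1 of \cite{KeTa98}) promotes the dyadic pieces $T_l$ to the endpoint bilinear estimate
\begin{align*}
|T(F,G)| \cleq N^{2\gamma}\,\|P_N F\|_{L^{q'}(I:L^{r'})} \|\widetilde{P}_N G\|_{L^{q'}(I:L^{r'})}
\end{align*}
uniformly for every admissible pair $(q,r)$ in the range stated, including the wave endpoint $(2,\tfrac{2(d-1)}{d-3})$. Inserting the multiplier $(\sqrt{-\Delta-1/4})^{-1}$ and invoking the Mikhlin remark gives, by duality, the per-frequency bound
\begin{align*}
\Bigl\| \int_0^t \cD(t-s) P_N F(s)\, ds \Bigr\|_{L^q(I:L^r)} \cleq N^{2\gamma-1}\, \|P_N F\|_{L^{q'}(I:L^{r'})}
\end{align*}
for each $N\geq 1$, with the analogous $N^{2\gamma}$ bound for $\partial_t \cD$. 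The boundary piece $N=1$, where $P_N$ meets the sharp cutoff $P_{>1}$, follows directly from Lemma \ref{lem2.5} and Remark \ref{rmk1.1.1} as noted in the excerpt.

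To upgrade these per-frequency bounds to the Sobolev-level estimate I decompose $F = \sum_{N\geq 1} P_N F$ and write by duality
\begin{align*}
\Bigl\| \int_0^t \cD(t-s) P_{>1} F(s)\, ds \Bigr\|_{L^q L^r} = \sup_{\|G\|_{L^{q'}L^{r'}}=1} \Bigl|\sum_N \widetilde{T}_N(F,G)\Bigr|,
\end{align*}
where $\widetilde{T}_N$ is the bilinear form associated to $\cD$ with $P_N$ inserted on the $F$-side and $\widetilde{P}_N$ on the $G$-side (frequency-support considerations make the off-diagonal contributions vanish). Cauchy--Schwarz in the dyadic index gives
\begin{align*}
\Bigl|\sum_N \widetilde{T}_N(F,G)\Bigr| \cleq \Bigl(\sum_N N^{2(2\gamma-1)}\|P_N F\|_{L^{q'}L^{r'}}^2\Bigr)^{1/2} \Bigl(\sum_N \|\widetilde{P}_N G\|_{L^{q'}L^{r'}}^2\Bigr)^{1/2}.
\end{align*}
Since $q',r'\leq 2$, Minkowski's inequality lets me exchange the $\ell^2$ and $L^{q'}(L^{r'})$ norms, and then the Littlewood--Paley square-function theorem (valid for $1<r'<\infty$) identifies each factor with a Sobolev norm, producing $\|\jbra{\nabla}^{2\gamma-1} F\|_{L^{q'}L^{r'}}$ and $\|G\|_{L^{q'}L^{r'}}$ respectively. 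Taking the supremum over $G$ finishes the proof.

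The main obstacle I anticipate is not the per-frequency analysis, which is essentially supplied by the lemmas already proved in this section, but rather the clean dyadic summation at the endpoint: one must verify that the Minkowski exchange between $\ell^2$ and $L^{q'}(L^{r'})$ together with the Littlewood--Paley identification remain valid throughout the full admissible range, in particular the cases $q=\infty$ and the pair $q'=2$, $r'=\tfrac{2(d-1)}{d+1}$ where every exponent sits exactly on the boundary and no room is available for losses. A minor bookkeeping concern is the transition from $|\nabla|^{2\gamma-1}$ (which naturally appears from the high-frequency Mikhlin gain) to the inhomogeneous $\jbra{\nabla}^{2\gamma-1}$ demanded by the statement, but this is harmless since all Fourier supports lie in $\{|\xi|\geq 1/2\}$.
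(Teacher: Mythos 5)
Your proposal is correct and follows essentially the same route as the paper: the per-frequency bilinear estimates of Lemmas \ref{lem_bilin_est_nonadm} and \ref{lem2.5} (the latter upgraded to the endpoint via the Keel--Tao interpolation lemma), the reduction of $\cD(t)P_{>1}$ to half-wave propagators with the $N^{-1}$ Mikhlin gain from $(\sqrt{-\Delta-1/4})^{-1}$, and then summation over dyadic blocks by duality, Cauchy--Schwarz, Minkowski (using $q',r'\le 2$), and the Littlewood--Paley square-function theorem. The paper compresses this last summation step into a single sentence, and your write-up simply supplies the details, including the correct verification that the exchange of $\ell^2$ with $L^{q'}(L^{r'})$ is available throughout the stated range.
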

This lemma and Lemma \ref{lem_str_est_low} immediately imply Theorem \ref{thm1.3}.
Furthermore, applying a duality argument and the Bernstein inequality as in
\cite[Lemmas 2.7--2.10]{Inup}, we obtain Proposition \ref{prop1.4}.

We can also get the Besov type Strichartz estimates. See Appendix \ref{secA} for the proof of Besov type Strichartz estimates, Propositions \ref{prop1.5} and \ref{prop1.6}.


\section{Local well-posedness when $d \geq 6$}
\label{sec3}

We give the local well-posedness when $d \geq 6$ by using the exotic Strichartz estimates. 

\subsection{Function spaces}
\label{sec3.1}

For $d \geq 6$, we define the function spaces as follows.
\begin{align*}
	\norm{u}_{S(I)}
	&:= \norm{u}_{ L_{t,x}^{ \frac{2(d+1)}{d-2} } (I) },
	\\
	\norm{u}_{X(I)}
	&:=\norm{u}_{ L_{t}^{\frac{d^2+d}{d+2}} W_{x}^{ \frac{2}{d}, \frac{2(d+1)}{d-1} }  (I) },
	\\
	\norm{u}_{X'(I)}
	&:=\norm{u}_{ L_{t}^{\frac{d^2+d}{3d+2} } W_{x}^{ \frac{2}{d}, \frac{2(d+1)}{d+3} }  (I) },
	\\
	\norm{u}_{Y(I)}
	&:=\norm{u}_{ L_{t}^{\frac{2d^3-7d^2-9d}{d^3-6d^2+7d-2} } W_{x}^{ \frac{d^2-4d^2}{2d^2-9d}, \frac{4d^3-14d^2-18d}{2d^3-11d^2+11d-8} }  (I) },
	\\
	\norm{u}_{W(I)}
	&:=\norm{u}_{ L_{t}^{\frac{2(d+1)}{d-1}} B_{\frac{2(d+1)}{d-1},2}^{\frac{1}{2}}(I)},
	\\
	 \norm{u}_{W'(I)}
	&:=\norm{u}_{ L_{t}^{\frac{2(d+1)}{d+3}} B_{\frac{2(d+1)}{d+3},2}^{\frac{1}{2}}(I)},
	\\
	\norm{u}_{S^1(I)}
	&:=\max \l\{ \norm{u}_{ L_{t}^{\frac{2d^3-7d^2-9d}{d^3-6d^2+7d-2}}  B_{\frac{4d^3-14d^2-18d}{2d^3-11d^2+11d-8},2}^{ \frac{d^2-4d-2}{2d^2-9d}}}  ,
	\norm{u}_{L_{t}^{\frac{d^2+d}{d+2}} B_{\frac{2d^3-2d}{d^3-5d-8},2}^{ \frac{d^2-2d-2}{d^2-d} }},
	\norm{u}_{W(I)} \r\}.
\end{align*}

We remark that the norms in the definition of $S^1(I)$ have the form $L_{t}^{q} B_{r,2}^{1-\gamma(r)}$ whose $(q,r)$ satisfies $1/q=\frac{d-1}{2}(1/2-1/r)$ and $\gamma(r)=\frac{d+1}{2}(1/2-1/r)$, i.e., $(q,r)$ is a wave admissible pair. 

Since we have $B_{p,2}^{s} \hookrightarrow F_{p,2}^{s} \ceq W^{s,p}$ if $p \geq 2$, we get
\begin{align*}
	\norm{u}_{Y(I)} \cleq \norm{u}_{S^1(I)}.
\end{align*}
Moreover, by the Sobolev embedding and the embedding $B_{p,2}^{s} \hookrightarrow  W^{s,p}$ for $p \geq 2$, we get
\begin{align*}
	\norm{u}_{X(I)} 
	\cleq \norm{u}_{L_{t}^{\frac{d^2+d}{d+2}} W^{ \frac{d^2-2d-2}{d^2-d},\frac{2d^3-2d}{d^3-5d-8} }}
	\cleq \norm{u}_{S^1(I)}.
\end{align*}

We have the following interpolation. 

\begin{lemma}
\label{lem3.1}
Let $d \geq 6$ and $I \subset \R$ be any time interval. Then, we have the following. 

(1). Let $\theta_1= \frac{2d+4}{d^2-2d}$. 
\begin{align*}
	\norm{u}_{X(I)} 
	\cleq \norm{u}_{S(I)}^{\theta_1} \norm{u}_{L_{t}^{\infty} W_{x}^{\frac{2d-4}{d^2-4d-4}, \frac{2d^2-8d-8}{d^2-6d+8}}}^{1-\theta_1} 
	\cleq \norm{u}_{S(I)}^{\theta_1} \norm{u}_{L_{t}^{\infty} H_x^1}^{1-\theta_1}.
\end{align*}

(2). Let $\theta_2= \frac{d}{d^2-3d-4}$. 
\begin{align*}
	\norm{u}_{S(I)} 
	\cleq \norm{u}_{X(I)}^{\theta_2} \norm{u}_{L_{t}^{\frac{2(d+1)}{d-1}} W_{x}^{\frac{1}{2}, \frac{2(d+1)}{d-1}}}^{1-\theta_2} 
	\cleq \norm{u}_{X(I)}^{\theta_2} \norm{u}_{W(I)}^{1-\theta_2}.
\end{align*}

(3). Let $\theta_3= \frac{1}{2(d-4)}$. 
\begin{align*}
	\norm{u}_{L_{t}^{\frac{2(d+1)}{d-2}} W_{x}^{\frac{1}{2}, \frac{2d(d+1)}{d^2-d+1}}} 
	\cleq \norm{u}_{X(I)}^{\theta_3} \norm{u}_{Y(I)}^{1-\theta_3} 
	\cleq \norm{u}_{X(I)}^{\theta_3} \norm{u}_{S^1}^{1-\theta_3}.
\end{align*}
\end{lemma}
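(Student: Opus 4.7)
The strategy for all three parts is identical: I would apply complex interpolation of mixed Lebesgue--Sobolev spaces, verifying that the prescribed parameter $\theta_i$ produces exactly the target temporal, spatial and regularity exponents, and then chain with a scale-critical Sobolev embedding (and in parts needing $S^1(I)$ or $W(I)$, the embedding $B^s_{p,2}(\R^d) \hookrightarrow W^{s,p}(\R^d)$ for $p \geq 2$). The key identity I would invoke is
\begin{align*}
	[L^{q_0}_t(I;W^{s_0,r_0}_x), L^{q_1}_t(I; W^{s_1,r_1}_x)]_{\theta} = L^{q}_t(I; W^{s,r}_x),
\end{align*}
where $(1/q, 1/r, s)$ is the convex combination of $(1/q_i, 1/r_i, s_i)$ with weight $\theta$, which is standard for Bessel potential spaces. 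Combined with $\|u\|_X^{1-\theta} \|u\|_Y^{\theta}$-type bounds from complex interpolation, this immediately gives inequalities of the desired form.

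For \textbf{part (1)}, I would interpolate $S(I) = L^{\frac{2(d+1)}{d-2}}_{t,x} = L^{\frac{2(d+1)}{d-2}}_t W^{0, \frac{2(d+1)}{d-2}}_x$ with $L^{\infty}_t W^{\frac{2d-4}{d^2-4d-4}, \frac{2d^2-8d-8}{d^2-6d+8}}_x$ using weight $\theta_1 = \frac{2d+4}{d^2-2d}$. A short computation (writing every exponent as a ratio over $d(d-2)$, $d(d+1)$, or $d^2-4d-4$) shows that the convex combinations produce exactly the temporal exponent $\frac{d^2+d}{d+2}$, spatial exponent $\frac{2(d+1)}{d-1}$, and regularity $2/d$ of $X(I)$. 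The second inequality then follows from the scale-critical Sobolev embedding $H^1(\R^d) \hookrightarrow W^{\frac{2d-4}{d^2-4d-4}, \frac{2d^2-8d-8}{d^2-6d+8}}(\R^d)$, whose validity is checked by the identity $1-d/2 = \frac{2d-4}{d^2-4d-4} - \frac{d(d^2-6d+8)}{2(d^2-4d-4)}$.

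For \textbf{part (2)}, I would interpolate $X(I)$ with $L^{\frac{2(d+1)}{d-1}}_t W^{1/2, \frac{2(d+1)}{d-1}}_x$ (the latter bounded by $W(I)$ via $B^{1/2}_{p,2} \hookrightarrow W^{1/2,p}$ for $p \geq 2$), with parameter $\theta_2 = \frac{d}{(d-4)(d+1)}$. Checking the convex combinations yields the temporal exponent $\frac{2(d+1)}{d-2}$, spatial exponent $\frac{2(d+1)}{d-1}$, and regularity $\frac{d}{2(d+1)}$, and the scale-critical embedding $W^{\frac{d}{2(d+1)}, \frac{2(d+1)}{d-1}} \hookrightarrow L^{\frac{2(d+1)}{d-2}}$ (matching since $\frac{d}{2(d+1)} = \frac{d(d-1)}{2(d+1)} - \frac{d(d-2)}{2(d+1)}$) completes the proof. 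For \textbf{part (3)}, the same scheme works with $X(I)$ and $Y(I)$ and weight $\theta_3 = \frac{1}{2(d-4)}$; the interpolated exponents are checked to be exactly $(q,r,s) = \bigl(\frac{2(d+1)}{d-2}, \frac{2d(d+1)}{d^2-d+1}, \frac{1}{2}\bigr)$, and the bound $\|u\|_{Y(I)} \leq \|u\|_{S^1(I)}$ again comes from $B^s_{p,2} \hookrightarrow W^{s,p}$ for $p \geq 2$.

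The main obstacle is not analytic but purely algebraic: one must verify that each of $\theta_1, \theta_2, \theta_3$ produces three arithmetic identities simultaneously (time exponent, space exponent, and Sobolev regularity), and in parts (1)--(2) that the tail Sobolev embedding is indeed scale-critical. Part (3) has the most cumbersome exponents, and I would handle it by factoring each numerator and denominator over $\{d-4, d-2, d-1, d, d+1\}$ (for example, $d^3-6d^2+7d-2 = (d-1)(d^2-5d+2)$ and $2d^3-7d^2-9d = d(2d-9)(d+1)$), which keeps the bookkeeping tractable. No genuinely new estimate beyond complex interpolation and Sobolev embedding is required.
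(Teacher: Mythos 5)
Your proposal is correct and is essentially the paper's argument: the paper simply cites \cite[Lemma 2.10]{Bu13} for the homogeneous version (which is itself proved by exactly this interpolation-plus-scale-critical-Sobolev-embedding scheme) and notes the inhomogeneous case follows in the same way, whereas you carry out the exponent arithmetic explicitly. I checked the convex-combination identities for the time, space, and regularity exponents in all three parts with the stated $\theta_1,\theta_2,\theta_3$, and they all hold.
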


\begin{proof}
The statement for the homogeneous spaces are obtained by \cite[Lemma 2.10]{Bu13}. The desired statement can be obtained in the same way as for the homogeneous spaces. 
\end{proof}

\subsection{Nonlinear estimates}
\label{sec3.2}

We collect some nonlinear estimates. 

\begin{lemma}[Estimates for difference]
\label{lem3.2}
Let $1/p=1/p_1+1/p_2=1/p_3+1/p_4$, $1<p_i<\infty$ for $i=1,2,3,4$. Assume that a function $F\in C^{1,\alpha}(\R:\R)$ for some $0<\alpha<1$ and that $F'(0)=0$. Then, we have
\begin{align*}
	\norm{F(u)-F(v)}_{B^{\frac{1}{2}}_{p,2} } 
	\cleq \norm{u-v}_{B^{\frac{1}{2}}_{p_1,2} } \norm{|u|^{\alpha}}_{L^{p_2}}
	+  \norm{|u-v|^\alpha}_{L^{p_3} } \norm{v}_{B^{\frac{1}{2}}_{p_4,2}}.
\end{align*}
\end{lemma}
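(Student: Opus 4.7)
The plan is to run a finite-difference (Moser-type) argument based on the equivalent characterization
\[
\|f\|_{\dot B^{1/2}_{p,2}} \sim \biggl( \int_{\R^d} |h|^{-d-1} \|\tau_h f - f\|_{L^p}^2 \, dh \biggr)^{1/2},
\]
valid since $1/2 \in (0,1)$, together with $\|f\|_{B^{1/2}_{p,2}} \sim \|f\|_{L^p} + \|f\|_{\dot B^{1/2}_{p,2}}$ (where $\tau_h f(x) := f(x+h)$). Set $g := F(u) - F(v)$; the idea is to decompose the finite difference $\tau_h g - g$ into two pieces that match the two summands on the right-hand side of the claim. The key identity, obtained by writing $F(u(x+h)) - F(u(x)) = \Phi_u(x,h)(\tau_h u - u)(x)$ with $\Phi_u(x,h) := \int_0^1 F'\bigl((1-\tau) u(x) + \tau u(x+h)\bigr) \, d\tau$ and analogously $\Phi_v$, is
\[
(\tau_h g - g)(x) = \Phi_u(x,h) \bigl( \tau_h(u-v) - (u-v) \bigr)(x) + (\Phi_u - \Phi_v)(x,h) \bigl( \tau_h v - v \bigr)(x).
\]

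Using $F'(0) = 0$ and $F' \in C^{0,\alpha}$, I will extract $|F'(w)| \lesssim |w|^\alpha$ and $|F'(w_1) - F'(w_2)| \lesssim |w_1 - w_2|^\alpha$; combined with the subadditivity $(a+b)^\alpha \leq a^\alpha + b^\alpha$ for $\alpha \in (0,1)$, these yield the pointwise multiplier bounds $|\Phi_u(x,h)| \lesssim |u(x)|^\alpha + |u(x+h)|^\alpha$ and $|(\Phi_u - \Phi_v)(x,h)| \lesssim |(u-v)(x)|^\alpha + |(u-v)(x+h)|^\alpha$. H\"older's inequality in $x$ with $1/p = 1/p_1 + 1/p_2 = 1/p_3 + 1/p_4$, together with translation invariance of Lebesgue norms applied to the terms depending on $x+h$, then produces
\[
\|\tau_h g - g\|_{L^p} \lesssim \||u|^\alpha\|_{L^{p_2}} \|\tau_h(u-v) - (u-v)\|_{L^{p_1}} + \||u-v|^\alpha\|_{L^{p_3}} \|\tau_h v - v\|_{L^{p_4}}
\]
uniformly in $h$. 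Plugging this bound into the Besov characterization and applying Minkowski's inequality in $L^2(|h|^{-d-1}\,dh)$ to pull the integration inside each product factor immediately gives the desired estimate for $\|g\|_{\dot B^{1/2}_{p,2}}$.

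The $L^p$-piece of the inhomogeneous Besov norm will be handled separately by the pointwise expansion $F(u) - F(v) = F'(u)(u-v) + O(|u-v|^{1+\alpha})$, giving $|F(u) - F(v)| \lesssim |u|^\alpha |u-v| + |u-v|^{\alpha+1}$, followed by H\"older and the embedding $B^{1/2}_{p_i,2} \hookrightarrow L^{p_i}$. I expect the main obstacle to be a bookkeeping one: matching the natural H\"older splittings with the exact form of the right-hand side. The finite-difference (Besov seminorm) part drops out cleanly once the decomposition above is set up, but for the $L^p$-contribution the natural estimate produces a factor $\|u-v\|_{L^{p_4}}$ where the claim calls for $\|v\|_{L^{p_4}}$, and reconciling them will rely on the identity $v = u - (u-v)$ together with $|v|^\alpha \leq |u|^\alpha + |u-v|^\alpha$ -- which is precisely what restricts the argument to $0 < \alpha < 1$.
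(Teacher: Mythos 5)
Your proposal is correct, and for the main part of the lemma it takes a genuinely different route from the paper. The paper splits $\norm{F(u)-F(v)}_{B^{1/2}_{p,2}}\sim \norm{F(u)-F(v)}_{\dot B^{1/2}_{p,2}}+\norm{F(u)-F(v)}_{L^p}$ exactly as you do, but then disposes of the homogeneous seminorm by simply citing the known estimate \cite[Lemma 2.10]{Bu13} (a Littlewood--Paley/paraproduct argument), whereas you reprove it from scratch via the first-difference characterization of $\dot B^{1/2}_{p,2}$ and the decomposition $\tau_h g-g=\Phi_u\,(\tau_h(u-v)-(u-v))+(\Phi_u-\Phi_v)(\tau_h v-v)$; this identity is right, the multiplier bounds $|\Phi_u|\lesssim |u(x)|^\alpha+|u(x+h)|^\alpha$ and $|\Phi_u-\Phi_v|\lesssim |(u-v)(x)|^\alpha+|(u-v)(x+h)|^\alpha$ follow from $F'(0)=0$ and $F'\in C^{0,\alpha}$, and H\"older plus translation invariance then give precisely the two seminorm products. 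Your route is self-contained and elementary; the paper's is shorter but imports the black box. The $L^p$ piece is handled identically in both (mean value theorem plus H\"older), and both treatments gloss over the same small point: one must pass from the naive bound $|F(u)-F(v)|\lesssim |u-v|\,|u|^\alpha+|u-v|^{1+\alpha}$ to $|u-v|\,|u|^\alpha+|u-v|^\alpha|v|$. You correctly flag this as the residual bookkeeping, but the inequality you invoke ($|v|^\alpha\le |u|^\alpha+|u-v|^\alpha$) points the wrong way; what is actually needed is the elementary pointwise fact $|u-v|^{1+\alpha}\lesssim |u-v|\,|u|^\alpha+|u-v|^\alpha|v|$, obtained by splitting according to whether $|u|\gtrsim |u-v|$ or $|v|\gtrsim|u-v|$ (one of which must hold since $|u|+|v|\ge|u-v|$). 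With that one correction your argument closes completely.
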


\begin{proof}
We have
\begin{align*}
	\norm{F(u)-F(v)}_{B^{\frac{1}{2}}_{p,2} } 
	\ceq \norm{F(u)-F(v)}_{\dot{B}^{\frac{1}{2}}_{p,2} } +\norm{F(u)-F(v)}_{L^p }.
\end{align*}
For the first term, we have the following estimate by \cite[Lemma 2.10]{Bu13}.
\begin{align*}
	\norm{F(u)-F(v)}_{\dot{B}^{\frac{1}{2}}_{p,2} } 
	\cleq \norm{u-v}_{\dot{B}^{\frac{1}{2}}_{p_1,2} } \norm{|u|^{\alpha}}_{L^{p_2}}
	+  \norm{|u-v|^\alpha}_{L^{p_3} } \norm{v}_{\dot{B}^{\frac{1}{2}}_{p_4,2}}.
\end{align*}
For the second term, we have the following from the H\"{o}lder inequality and the mean value theorem.
\begin{align*}
	\norm{F(u)-F(v)}_{L^p  } 
	\cleq \norm{u-v}_{L^{p_1}  } \norm{|u|^{\alpha}}_{L^{p_2}}
	+  \norm{|u-v|^\alpha}_{L^{p_3} } \norm{v}_{L^{p_4} }.
\end{align*}
Combining them, we get the desired statement. 
\end{proof}

\begin{lemma}[Nonlinear estimates]
\label{lem3.3}
Let $F(u)=\pm|u|^{\frac{4}{d-2}} u$. Then, the following are true.
\begin{align*}
	\norm{F(u)}_{W'(I)} 
	&\cleq \norm{u}_{X(I)}^{\theta_2\frac{4}{d-2}} \norm{u}_{S^1(I)}^{(1-\theta_2)\frac{4}{d-2}+1},
	\\
	\norm{F(u)}_{X'(I)} 
	&\cleq \norm{u}_{X(I)}^{\theta_2\frac{4}{d-2}+1} \norm{u}_{S^1(I)}^{(1-\theta_2)\frac{4}{d-2}},
	\\
	\norm{\jbra{\nabla}^{\frac{2}{d}} F'(u)}_{L_t^{\frac{d+1}{2}} L_x^{\frac{d^3+d^2}{2d^2+2d+2 }}(I) } 
	&\cleq \norm{u}_{L_t^{\frac{2(d+1)}{d-2}} W^{\frac{1}{2}, \frac{2(d^2+d)}{d^2-d+1}} }^{\frac{4}{d}} \norm{u}_{S(I)}^{\frac{8}{d(d-2)}},
	\\
	\norm{F(u)-F(v)}_{X'(I) } 
	&\cleq \norm{u-v}_{X(I) } \l( \norm{u-v}_{S(I) }^{\frac{4}{d-2}} + \norm{v}_{S(I)}^{\frac{4}{d-2}}   \r) 
	\\
	&\quad + \norm{u-v}_{X(I)} \l( \norm{u-v}_{S(I)} + \norm{v}_{S(I)}\r)^{\frac{8}{d(d-2)}}
	\\
	&\qquad  \times \l( \norm{u-v}_{X(I)}^{\theta_3} \norm{u-v}_{S^1(I)}^{1-\theta_3} + \norm{v}_{X(I)}^{\theta_3} \norm{v}_{Y(I)}^{1-\theta_3}\r)^{\frac{4}{d}},
	\\
	\norm{F(u)-F(v)}_{W'(I)} 
	&\cleq \norm{u-v}_{W(I) } \l( \norm{u-v}_{S(I) }^{\frac{4}{d-2}} + \norm{v}_{S(I)}^{\frac{4}{d-2}}  \r)
	\\
	&\quad + \norm{u-v}_{S(I)}^{\frac{4}{d-2}}  \norm{v}_{W(I)}.
\end{align*}
\end{lemma}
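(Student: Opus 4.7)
The proof follows a common template for all six bounds: apply a fractional Leibniz or chain rule to $F(u)=|u|^{4/(d-2)}u$ in the appropriate Sobolev scale (for the $W^{2/d,\cdot}$ targets in $X,X'$) or Besov scale (for the $B^{1/2}_{\cdot,2}$ targets in $W,W'$), use H\"older in time and space to distribute the $4/(d-2)$ spatial powers of $u$ among the $X$, $S$, $S^{1}$, $W$ norms, and finally invoke Lemma~\ref{lem3.1} to convert an appearance of $\|u\|_{S}$ into $\|u\|_{X}^{\theta_{2}}\|u\|_{S^{1}}^{1-\theta_{2}}$ (or the analogous $\theta_{3}$-identity). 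Here the hypothesis $d\ge 6$ enters as $\alpha:=4/(d-2)\le 1$, so that $F\in C^{1,\alpha}$ and $F'(u)\sim|u|^{\alpha}$ is H\"older of order $\alpha$.

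For $\|F(u)\|_{W'}$ and $\|F(u)\|_{X'}$, the fractional Leibniz rule produces a sum of terms of the form $\|\langle\nabla\rangle^{s}u\|\cdot\||u|^{4/(d-2)}\|$ with $s=1/2$ or $s=2/d$ in conjugate Lebesgue exponents. The derivative factor is placed in the corresponding wave-admissible component of $S^{1}$ (namely $W$ itself for $s=1/2$, or the $L^{(d^{2}+d)/(d+2)}_{t}$-component of $S^{1}$ for $s=2/d$), while $|u|^{4/(d-2)}$ is distributed as $4/(d-2)$ copies of $u$ in $L^{2(d+1)/(d-2)}_{x}$. H\"older in time then breaks these $4/(d-2)$ factors into a $\theta_{2}\cdot 4/(d-2)$-power matched against the $S$-time exponent and the remaining $(1-\theta_{2})\cdot 4/(d-2)$-power matched against $S^{1}$-time exponents; Lemma~\ref{lem3.1}(2) converts the $S$-piece into the asserted product of $X$- and $S^{1}$-powers.

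For the $F'(u)$ estimate, since $|u|^{4/(d-2)}$ is H\"older of order $\alpha=4/(d-2)>2/d$, the chain rule for H\"older nonlinearities in the Besov scale yields $\|\langle\nabla\rangle^{2/d}|u|^{4/(d-2)}\|_{L^{p}_{x}}\cleq\|u\|^{4/(d-2)}_{B^{(d-2)/(2d)}_{q,2}}$ for a matched $q$. Interpolate $B^{(d-2)/(2d)}_{q,2}$ between $B^{1/2}_{\cdot,2}$ and an appropriate Lebesgue space with weight $\theta=(d-2)/d$, so that $\theta/2=(d-2)/(2d)$, and raise the resulting estimate to the $4/(d-2)$-th power; since $\theta\cdot 4/(d-2)=4/d$ and $(1-\theta)\cdot 4/(d-2)=8/(d(d-2))$, this reproduces exactly the $W^{1/2,\cdot}$- and $L^{2(d+1)/(d-2)}_{x}$-factors on the right-hand side. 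H\"older in time yields the $L^{(d+1)/2}_{t}$-norm.

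For the two difference bounds, apply Lemma~\ref{lem3.2} with $\alpha=4/(d-2)$, then H\"older in time. The factor $\|v\|_{L^{2(d+1)/(d-2)}_{t}W^{1/2,\cdot}_{x}}$ appearing on the right of Lemma~\ref{lem3.2} is controlled via Lemma~\ref{lem3.1}(3), which bounds it by $\|v\|_{X}^{\theta_{3}}\|v\|_{Y}^{1-\theta_{3}}\le\|v\|_{X}^{\theta_{3}}\|v\|_{S^{1}}^{1-\theta_{3}}$; the asymmetric $u-v$ and $v$ terms then combine into the claimed symmetric bound. The main obstacle throughout is purely combinatorial: for each of the six inequalities one must verify that the chosen H\"older conjugate exponents add correctly in each of time, space, and the fractional-derivative count, and that the embeddings $B^{s}_{p,2}\hookrightarrow W^{s,p}$ for $p\ge 2$ fit together with the interpolation identities of Lemma~\ref{lem3.1} so that scaling balances. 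The definitions of $X,Y,W,S^{1}$ are engineered precisely to make this possible, and checking it case by case constitutes the bulk of the proof.
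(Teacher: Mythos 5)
Your overall template --- fractional chain/Leibniz rules, H\"older in time and space, the embeddings $B^{s}_{p,2}\hookrightarrow W^{s,p}$ ($p\ge 2$) and $W^{s,p}\hookrightarrow B^{s}_{p,2}$ ($p\le 2$), and Lemma~\ref{lem3.1} to trade $\norm{u}_{S}$ for $\norm{u}_{X}^{\theta_2}\norm{u}_{S^1}^{1-\theta_2}$ --- is exactly the paper's route for the first three estimates and the last one. (One small phrasing issue: for $\norm{F(u)}_{W'}$ and $\norm{F(u)}_{X'}$ the H\"older step puts \emph{all} $\tfrac{4}{d-2}$ factors into $\norm{u}_{S}^{4/(d-2)}$, and the split into $\theta_2$ and $1-\theta_2$ powers happens afterwards by Lemma~\ref{lem3.1}(2), not at the level of the time H\"older exponents; the outcome is the same. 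Also, where you re-derive the $F'(u)$ bound by the H\"older-chain rule plus interpolation, the paper simply quotes the homogeneous version from \cite{Bu13} and adds the Lebesgue part by H\"older; your exponent bookkeeping $\theta=\tfrac{d-2}{d}$, $\theta\cdot\tfrac{4}{d-2}=\tfrac4d$, $(1-\theta)\cdot\tfrac{4}{d-2}=\tfrac{8}{d(d-2)}$ is consistent.)

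The one genuine mis-step is your claim that \emph{both} difference bounds follow from Lemma~\ref{lem3.2}. That lemma estimates $\norm{F(u)-F(v)}_{B^{1/2}_{p,2}}$ and produces exactly the two-term structure of the $W'$ difference bound (where the paper indeed uses it), but it cannot yield the $X'$ difference bound: the target there is regularity $\tfrac2d$, not $\tfrac12$, and the right-hand side has the three-factor term $\norm{u-v}_{X}\l(\norm{u-v}_{S}+\norm{v}_{S}\r)^{\frac{8}{d(d-2)}}\l(\cdots\r)^{\frac4d}$, which does not arise from Lemma~\ref{lem3.2}'s conclusion. The correct decomposition (the paper quotes \cite[Lemma~2.11]{Bu13} for the homogeneous part and adds the Lebesgue part by H\"older) writes $F(u)-F(v)=(u-v)\int_0^1 F'(\lambda u+(1-\lambda)v)\,d\lambda$ and applies the Leibniz rule at regularity $\tfrac2d$: when the derivative falls on $u-v$ one gets the first term, and when it falls on $F'(\cdot)$ one invokes the H\"older-chain rule for $F'$ (order $\tfrac{4}{d-2}$), which is where the $F'$-estimate of the lemma and hence the $\theta_3$-interpolation of Lemma~\ref{lem3.1}(3) enter. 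You correctly identify Lemma~\ref{lem3.1}(3) as the source of the $\theta_3$ factor, but you need this product-rule decomposition, not Lemma~\ref{lem3.2}, to reach the stated form.
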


\begin{proof}
By the embedding $W^{s,p} \ceq F_{p,2}^{s}  \hookrightarrow B_{p,2}^{s}$ for $p \leq 2$, the fractional chain rule, the embedding $B_{p,2}^{s} \hookrightarrow  W^{s,p}$ for $p \geq 2$, and Lemma \ref{lem3.1} (2), we know 
\begin{align*}
	\norm{F(u)}_{W'(I)} 
	&\cleq \norm{F(u)}_{ L_{t}^{\frac{2(d+1)}{d+3}} W^{\frac{1}{2},\frac{2(d+1)}{d+3} } }
	\\
	&\cleq \norm{u}_{S(I)}^{\frac{4}{d-2}} \norm{u}_{ L_t^{\frac{2(d+1)}{d-1}} W^{\frac{1}{2}, \frac{2(d+1)}{d-1}} }
	\\
	&\cleq \norm{u}_{S(I)}^{\frac{4}{d-2}} \norm{u}_{ W(I) }
	\\
	&\cleq \norm{u}_{X(I)}^{\theta_2\frac{4}{d-2}} \norm{u}_{S^1(I)}^{(1-\theta_2)\frac{4}{d-2}+1}.
\end{align*}

We have
\begin{align*}
	\norm{F(u)}_{X'(I)}
	& \cleq \norm{u}_{S(I)}^{\frac{4}{d-2}} \norm{u}_{X(I)}
	\\
	& \cleq \norm{u}_{X(I)}^{\theta_2\frac{4}{d-2}+1} \norm{u}_{S^1(I)}^{(1-\theta_2)\frac{4}{d-2}},
\end{align*}
where we used the fractional chain rule and Lemma \ref{lem3.1} (2). 

It is known by \cite{Bu13} that 
\begin{align*}
	\norm{|\nabla|^{\frac{2}{d}} F'(u)}_{L_t^{\frac{d+1}{2}} L_x^{\frac{d^3+d^2}{2d^2+2d+2 }}(I) } 
	&\cleq \norm{u}_{L_t^{\frac{2(d+1)}{d-2}} W^{\frac{1}{2}, \frac{2(d^2+d)}{d^2-d+1}} }^{\frac{4}{d}} \norm{u}_{S(I)}^{\frac{8}{d(d-2)}}.
\end{align*}
It is enough to show that 
\begin{align*}
	\norm{ F'(u)}_{L_t^{\frac{d+1}{2}} L_x^{\frac{d^3+d^2}{2d^2+2d+2 }}(I) } 
	&\cleq \norm{u}_{L_t^{\frac{2(d+1)}{d-2}} L^{\frac{2(d^2+d)}{d^2-d+1}} }^{\frac{4}{d}} \norm{u}_{S(I)}^{\frac{8}{d(d-2)}}.
\end{align*}
This follows from the H\"{o}lder inequality. 

By \cite[Lemma 2.11]{Bu13}, we have
\begin{align*}
	\norm{F(u)-F(v)}_{\dot{X}'(I) } 
	&\cleq \norm{u-v}_{\dot{X}(I) } \l( \norm{u-v}_{S(I) }^{\frac{4}{d-2}} + \norm{v}_{S(I)}^{\frac{4}{d-2}}   \r) 
	\\
	&\quad + \norm{u-v}_{\dot{X}(I)} \l( \norm{u-v}_{S(I)} + \norm{v}_{S(I)}\r)^{\frac{8}{d(d-2)}}
	\\
	&\qquad  \times \l( \norm{u-v}_{\dot{X}(I)}^{\theta_3} \norm{u-v}_{\dot{S}^1(I)}^{1-\theta_3} + \norm{v}_{\dot{X}(I)}^{\theta_3} \norm{v}_{\dot{Y}(I)}^{1-\theta_3}\r)^{\frac{4}{d}}.
\end{align*}
We get these Lebesgue-type inequality by the H\"{o}lder inequality and thus we get the desired inequalities combing them. 

The last inequality follows from Lemma \ref{lem3.2}. 
\end{proof}

\subsection{The proof of L.W.P}
\label{sec3.3}

We prove the local well-posedness when $d \geq 6$.

\begin{proof}
By Lemma \ref{lem3.1} (1), we have
\begin{align*}
	\norm{\cS(t)(u_0,u_1)}_{X(I)} 
	&\cleq \norm{\cS(t)(u_0,u_1)}_{S(I)}^{\theta_1} \norm{\cS(t)(u_0,u_1)}_{L^{\infty}(I:H^1)}^{1-\theta_1}
	\\
	&\cleq \norm{\cS(t)(u_0,u_1)}_{S(I)}^{\theta_1} A^{1-\theta_1},
\end{align*}
where $\cS(t)(u_0,u_1)=\cD(t)(u_0+u_1)+\partial_t \cD(t) u_0$. 
Let $\delta := C\norm{\cS(t)(u_0,u_1)}_{S(I)}^{\theta_1} A^{1-\theta_1}$ and set 
\begin{align*}
	\Phi(u)=\Phi[u_0,u_1](u):= \cS(t)(u_0,u_1) + \int_{0}^{t} \cD(t-s) ( |u(s)|^{\frac{4}{d-2}}u(s) ) ds.
\end{align*}
Take $u \in \{u : \norm{u}_{X(I)} \leq a, \norm{u}_{S^1(I)} \leq b \}$. Then, it follows from the Strichartz estimates and Lemma \ref{lem3.3} that 
\begin{align*}
	\norm{\Phi(u)}_{X(I)} 
	&\leq \delta +  \norm{\int_{0}^{t} \cD(t-s) ( |u(s)|^{\frac{4}{d-2}}u(s) ) ds}_{X(I)} 
	\\
	&\leq \delta + C \norm{ |u|^{\frac{4}{d-2}}u}_{X'(I)}
	\\
	&\leq \delta +C \norm{u}_{X(I)}^{\theta_2\frac{4}{d-2}+1} \norm{u}_{S^1(I)}^{(1-\theta_2)\frac{4}{d-2}}
	\\
	&\leq \delta+Ca^{\theta_2\frac{4}{d-2}+1} b^{(1-\theta_2)\frac{4}{d-2}}
\end{align*}
and 
\begin{align*}
	\norm{\Phi(u)}_{S^1(I)} 
	&\leq \norm{\cS(t)(u_0,u_1)}_{S^1(I)} +  \norm{\int_{0}^{t} \cD(t-s) ( |u(s)|^{\frac{4}{d-2}}u(s) ) ds}_{S^1(I)} 
	\\
	&\leq CA + C \norm{ |u|^{\frac{4}{d-2}}u}_{W'(I)}
	\\
	&\leq CA + C\norm{u}_{X(I)}^{\theta_2\frac{4}{d-2}} \norm{u}_{S^1(I)}^{(1-\theta_2)\frac{4}{d-2}+1}
	\\
	&\leq CA + C a^{\theta_2\frac{4}{d-2}}b^{(1-\theta_2)\frac{4}{d-2}+1}.
\end{align*}
Therefore, taking $a=2\delta$ and $b=2CA$ and choosing sufficiently small $\delta$ such that $C a^{\theta_2\frac{4}{d-2}}b^{(1-\theta_2)\frac{4}{d-2}} \leq 1/2$, it follows that
\begin{align*}
	\norm{\Phi(u)}_{X(I)} 
	\leq \delta+Ca^{\theta_2\frac{4}{d-2}+1} b^{(1-\theta_2)\frac{4}{d-2}}
	\leq a,
\end{align*}
and 
\begin{align*}
	\norm{\Phi(u)}_{S^1(I)} 
	\leq CA + C a^{\theta_2\frac{4}{d-2}}b^{(1-\theta_2)\frac{4}{d-2}+1}
	\leq b.
\end{align*}
Thus, $\Phi$ is a mapping on $\{u : \norm{u}_{X(I)} \leq a, \norm{u}_{S^1(I)} \leq b \}$. For $u,v \in \{u : \norm{u}_{X(I)} \leq a, \norm{u}_{S^1(I)} \leq b \}$, by the Strichartz estimate and Lemma \ref{lem3.3}, we have
\begin{align*}
	\norm{\Phi(u)-\Phi(v)}_{X(I)} 
	&\cleq 
	\norm{F(u)-F(v)}_{X'(I) } 
	\\
	&\cleq \norm{u-v}_{X(I) } \l( \norm{u-v}_{S(I)}^{\frac{4}{d-2}} + \norm{v}_{S(I)}^{\frac{4}{d-2}}   \r) 
	\\
	&\quad + \norm{u-v}_{X(I)} \l( \norm{u-v}_{S(I)} + \norm{v}_{S(I)}\r)^{\frac{8}{d(d-2)}}
	\\
	&\qquad  \times \l( \norm{u-v}_{X(I)}^{\theta_3} \norm{u-v}_{S^1(I)}^{1-\theta_3} + \norm{v}_{X(I)}^{\theta_3} \norm{v}_{Y(I)}^{1-\theta_3}\r)^{\frac{4}{d}}
	\\
	&=:I+I\!I.
\end{align*}
It follows from Lemma \ref{lem3.1} (2) that
\begin{align*}
	I 
	&\cleq  \norm{u-v}_{X(I)} \l( \norm{u-v}_{X(I)}^{\theta_2\frac{4}{d-2}} \norm{u-v}_{S^1(I)}^{(1-\theta_2)\frac{4}{d-2}}
	+ \norm{v}_{X(I)}^{\theta_2\frac{4}{d-2}} \norm{v}_{S^1(I)}^{(1-\theta_2)\frac{4}{d-2}}  \r) 
	\\
	&\cleq  \norm{u-v}_{X(I)} a^{\theta_2\frac{4}{d-2}} b^{(1-\theta_2)\frac{4}{d-2}}.
\end{align*}
We also have
\begin{align*}
	I\!I \cleq \norm{u-v}_{X(I)} (a^{\theta_2} b^{1-\theta_2})^{\frac{8}{d(d-2)}} (a^{\theta_3} b^{1-\theta_3})^{\frac{4}{d}}.
\end{align*}
Therefore, if $\delta$ (\textit{i.e.} $a$) is small, then $\Phi$ is a contraction map on $\{u : \norm{u}_{X(I)} \leq a, \norm{u}_{S^1(I)} \leq b \}$ with the distance $d(u,v):=\norm{u-v}_{X(I)}$. From the contraction mapping principle, we get the unique solution. 
\end{proof}


\section{Unconditional uniqueness}
\label{sec4}

\subsection{Paraproduct estimates}
\label{sec4.1}

We show paraproduct estimates for the inhomogeneous Besov spaces. 

We have an equivalence of Besov norms. See Appendix \ref{secB} for the proof. 

\begin{lemma}\label{lem_41}
Let  $1<p < \infty$, $1\leq q \leq \infty$, and $s >0$. Then it holds that
\begin{align*}
	\| f \|_{B_{p,q}^{s}} 
	&\sim_{J} \| \Delta_{\leq J} f \|_{L^p}  +  \left\|  \{ 2^{js} \| \Delta_{\geq j} f \|_{L^p}  \}_{j=J}^{\infty} \right\|_{l^{q}},
	\\
	\| f \|_{B_{p,q}^{-s}} 
	&\sim_{J} \| \Delta_{\leq J} f \|_{L^p}  +  \left\|  \{ 2^{-js} \| \Delta_{\leq j} f \|_{L^p}  \}_{j=J}^{\infty} \right\|_{l^{q}},
\end{align*}
where the implicit constants may depend on $J \in \mathbb{Z}_{\geq 0}$. 
\end{lemma}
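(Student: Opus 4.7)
The plan is to compare both claimed norms with the standard inhomogeneous Besov norm
$\|f\|_{B^{\sigma}_{p,q}} = \|\Delta_{\leq 0} f\|_{L^p} + \bigl\| \{2^{j\sigma} \|\Delta_j f\|_{L^p}\}_{j \geq 1} \bigr\|_{\ell^q}$
(with $\sigma = \pm s$) by purely dyadic, termwise manipulations. The low-frequency discrepancy is handled first: writing $\Delta_{\leq J} = \Delta_{\leq 0} + \sum_{j=1}^{J} \Delta_j$ and using the uniform $L^p$-boundedness of each Littlewood--Paley projector, the two terms $\|\Delta_{\leq J} f\|_{L^p}$ and $\|\Delta_{\leq 0} f\|_{L^p}$ differ by at most $\sum_{j=1}^{J}\|\Delta_j f\|_{L^p}$; each such term is controlled by $\|f\|_{B^\sigma_{p,q}}$ with a constant depending on $J$, while in the reverse direction $\Delta_{\leq 0} = \Delta_{\leq 0}\Delta_{\leq J}$ gives $\|\Delta_{\leq 0} f\|_{L^p} \lesssim \|\Delta_{\leq J} f\|_{L^p}$. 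The finitely many indices $1 \leq j \leq J-1$ in the dyadic sum are absorbed into the low-frequency term in the same way.

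For the positive-exponent equivalence, the core identity is the telescoping relation $\Delta_j = \Delta_{\geq j} - \Delta_{\geq j+1}$, which directly yields
$\bigl\|\{2^{js}\|\Delta_j f\|_{L^p}\}_{j\geq J}\bigr\|_{\ell^q} \lesssim \bigl\|\{2^{js}\|\Delta_{\geq j} f\|_{L^p}\}_{j\geq J}\bigr\|_{\ell^q}$
after a shift of index costing a factor of $2^s$. For the converse I would use $\Delta_{\geq j} f = \sum_{k \geq j} \Delta_k f$ (convergent in $L^p$, since $f \in B^s_{p,q}$ forces $\|\Delta_k f\|_{L^p} = O(2^{-ks})$), giving
$2^{js}\|\Delta_{\geq j} f\|_{L^p} \leq \sum_{k\geq j} 2^{-(k-j)s} \bigl( 2^{ks}\|\Delta_k f\|_{L^p} \bigr),$
a discrete convolution against the one-sided kernel $\{2^{-ms}\}_{m \geq 0}$; the summability of this kernel is precisely the condition $s > 0$, and Young's inequality then delivers the $\ell^q$-bound.

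The negative-exponent case is completely parallel with the dual identities $\Delta_j = \Delta_{\leq j} - \Delta_{\leq j-1}$ and $\Delta_{\leq j} = \Delta_{\leq 0} + \sum_{k=1}^{j}\Delta_k$; the same one-sided kernel $\{2^{-ms}\}_{m \geq 0}$ reappears (it is the positivity of $s$ that makes it $\ell^1$, despite the $-s$ weight), and the same Young-type estimate closes the argument. I do not anticipate a serious obstacle: the proof is essentially bookkeeping, and the only analytic input is the uniform-in-$j$ $L^p$-boundedness of the projectors $\Delta_{\leq j}$, $\Delta_{\geq j}$, and $\Delta_j$, which follows from the Mikhlin multiplier theorem and requires $1 < p < \infty$, the sole reason the endpoints $p=1,\infty$ are excluded.
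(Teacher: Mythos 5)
Your proposal is correct, and the overall bookkeeping (splitting off the finitely many blocks $1\le j\le J$, uniform $L^p$-boundedness of the projectors, the identity $\Delta_j=\Delta_{\leq j}-\Delta_{\leq j-1}$ and the one-sided discrete convolution against $\{2^{-ms}\}_{m\ge 0}$ closed by Young's inequality) matches the paper's proof of the $B^{-s}_{p,q}$ half almost line for line. The one genuine difference is in the $B^{s}_{p,q}$ half: the paper does not prove the key equivalence for the high-frequency tail itself, but instead reduces to $\|f\|_{B^s_{p,q}}\sim\|f\|_{L^p}+\|f\|_{\dot B^s_{p,q}}$ and cites \cite[Lemma 4.1]{Bu13} for $\|f\|_{\dot B^s_{p,q}}\sim\|\{2^{js}\|\Delta_{\ge j}f\|_{L^p}\}_{j\in\Z}\|_{l^q}$, whereas you prove both directions directly: the telescoping identity $\Delta_j=\Delta_{\ge j}-\Delta_{\ge j+1}$ (which is exact with the paper's conventions, since the symbols satisfy $\chi_{>2^{j-1}}-\chi_{>2^{j}}=\chi_{\le 2^j}-\chi_{\le 2^{j-1}}$) for one direction, and $\Delta_{\ge j}f=\sum_{k\ge j}\Delta_k f$ plus the same Young-type estimate for the other. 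Your route is self-contained and makes transparent that both halves of the lemma rest on the single fact that $\{2^{-ms}\}_{m\ge0}\in l^1$ for $s>0$; the paper's route is shorter on the page at the cost of an external reference. One small inaccuracy in a side remark: the uniform $L^p$-bounds for $\Delta_j$, $\Delta_{\le j}$, $\Delta_{\ge j}$ do not need the Mikhlin theorem or $1<p<\infty$ — these operators are convolutions with $L^1$-normalized dilates of fixed Schwartz functions (or the identity minus such), so Young's convolution inequality gives boundedness uniformly for all $1\le p\le\infty$; this does not affect the validity of your argument.
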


We decompose the product of two functions $f$ and $g$ in the following way.
\begin{align*}
	fg
	&=\Delta_{\leq 0} (fg) + \sum_{j=1}^{\infty} \Delta_{j} (fg)
	\\
	&=\Delta_{\leq 0} ((\Delta_{\leq 3}f)g) + \Delta_{\leq 0}((\Delta_{>3}f)g)
	\\
	&\quad + \sum_{j=1}^{\infty} \Delta_{j}  ((\Delta_{\leq j+ 3}f)g) + \sum_{j=1}^{\infty} \Delta_{j} ((\Delta_{>j+3}f)g)
	\\
	&=\Delta_{\leq 0} ((\Delta_{\leq 3}f)g) + \Delta_{\leq 0}((\Delta_{>3}f)\Delta_{>1}g)
	\\
	&\quad + \sum_{j=1}^{\infty} \Delta_{j}  ((\Delta_{\leq j+ 3}f)g) + \sum_{j=1}^{\infty} \Delta_{j} ((\Delta_{>j+3}f)\Delta_{> j+1}g).
\end{align*}
We set
\begin{align}
\label{g1}
	G_{1}(f,g)&:= \Delta_{\leq 0} ((\Delta_{\leq 3}f)g) + \sum_{j=1}^{\infty} \Delta_{j}  ((\Delta_{\leq j+ 3}f)g),
	\\
\label{g2}
	G_{2}(f,g)&:=\Delta_{\leq 0}((\Delta_{>3}f)\Delta_{>1}g) + \sum_{j=1}^{\infty} \Delta_{j} ((\Delta_{>j+3}f)\Delta_{> j+1}g).
\end{align}
Namely, we have the identity
$fg = G_{1}(f,g) + G_{2}(f,g)$.
We prepare the following paraproduct estimates.

\begin{lemma}[paraproduct estimates]\label{lem_pp}
Let $1<p_i<\infty$, $i=1,2,\cdots, 8$, $s>0$, and $\sigma>0$. Then, we have
\begin{align}
\label{g1_est}
	&\| G_{1}(f,g) \|_{B_{p,2}^{-s}} \lesssim \| f \|_{B_{p_1,2}^{-s}} \| g \|_{L^{p_2}} 
	\qquad \text{ if } \frac{1}{p}=\frac{1}{p_1} + \frac{1}{p_2},
	\\
\label{g2_est1}
	&\| G_{2}(f,g) \|_{B_{p,2}^{-s}} \lesssim \| f \|_{B_{p_3,2}^{-s}} \| g \|_{B_{p_4,\infty}^{s_1}} 
	\quad \text{ if } s_1>s \text{ and } \frac{1}{p}+\frac{s_1}{d}=\frac{1}{p_3} + \frac{1}{p_4},
	\\
\label{g2_est2}
	&\| G_{2}(f,g) \|_{B_{p,2}^{\sigma}} \lesssim \| f \|_{B_{p_5,2}^{-s}} \| g \|_{B_{p_6,\infty}^{s+\sigma}} 
	\quad \text{ if } \frac{1}{p}=\frac{1}{p_5} + \frac{1}{p_6}.
\end{align}
Moreover, when $g=f$, we have
\begin{align}
\label{g1_est2}
	&\| G_{1}(f,f) \|_{B_{p,2}^{\sigma}} \lesssim \| f \|_{B_{p_7,2}^{-s}} \| g \|_{B_{p_8,\infty}^{s+\sigma}} 
	\quad \text{ if } \frac{1}{p}=\frac{1}{p_7} + \frac{1}{p_8}.
\end{align}
\end{lemma}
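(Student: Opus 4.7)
The plan is to establish each of the four estimates by dyadic analysis, combining Lemma~\ref{lem_41}, H\"older's inequality, Bernstein's inequality for frequency-localized functions, and Young's convolution inequality on the resulting dyadic sums.

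For \eqref{g1_est}, the frequency disjointness of $\Delta_{j_0}$ and $\Delta_k$ for $|j_0-k|$ large implies that $\Delta_{j_0}G_1(f,g)\sim \Delta_{j_0}((\Delta_{\leq j_0+3}f)g)$ up to bounded overlap. H\"older's inequality gives $\|\Delta_{j_0}G_1(f,g)\|_{L^p}\lesssim \|\Delta_{\leq j_0+3}f\|_{L^{p_1}}\|g\|_{L^{p_2}}$, and Lemma~\ref{lem_41} for negative exponents shows $\{2^{-j_0 s}\|\Delta_{\leq j_0+3}f\|_{L^{p_1}}\}_{j_0}\in l^2$ with norm controlled by $\|f\|_{B^{-s}_{p_1,2}}$, concluding \eqref{g1_est}.

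The crucial estimate \eqref{g2_est1} requires a sharper Bernstein bound for the outer projection $\Delta_{j_0}$. Set $1/q:=1/p_3+1/p_4=1/p+s_1/d$, so $q<p$. Since $\Delta_{j_0}h$ is frequency-localized near $2^{j_0}$, Bernstein yields $\|\Delta_{j_0}h\|_{L^p}\lesssim 2^{j_0 s_1}\|h\|_{L^q}$, and together with H\"older in $L^q$,
\begin{align*}
  \|\Delta_{j_0}(\Delta_k f\cdot\Delta_l g)\|_{L^p}\lesssim 2^{j_0 s_1}\|\Delta_k f\|_{L^{p_3}}\|\Delta_l g\|_{L^{p_4}}.
\end{align*}
A Fourier-support analysis shows that only pairs $(k,l)$ with $k\sim l>j_0$ contribute. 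Writing $c_k:=2^{-ks}\|\Delta_k f\|_{L^{p_3}}\in l^2$ and using $2^{ks_1}\|\Delta_k g\|_{L^{p_4}}\leq \|g\|_{B^{s_1}_{p_4,\infty}}$, summation over $k$ yields
\begin{align*}
  2^{-j_0 s}\|\Delta_{j_0}G_2(f,g)\|_{L^p}\lesssim \|g\|_{B^{s_1}_{p_4,\infty}}\sum_{m>3}2^{-m(s_1-s)}c_{j_0+m}.
\end{align*}
Precisely because $s_1>s$, the kernel $\{2^{-m(s_1-s)}\}_m$ is $l^1$-summable, and Young's inequality $l^1\ast l^2\subseteq l^2$ gives the desired $l^2_{j_0}$ bound. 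This is the decisive place where $s_1>s$ is used.

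The estimate \eqref{g2_est2} is analogous but simpler: a direct H\"older bound $\|\Delta_{j_0}(\Delta_k f\cdot\Delta_l g)\|_{L^p}\leq\|\Delta_k f\|_{L^{p_5}}\|\Delta_l g\|_{L^{p_6}}$ suffices (no Bernstein on $\Delta_{j_0}$ is needed), and summability is supplied by $\sigma>0$ through the kernel $\{2^{-m\sigma}\}_m\in l^1$. For \eqref{g1_est2}, we apply a Bony-type low-high paraproduct decomposition to $G_1(f,f)$ and bound the typical $\Delta_{j_0}(S_{j_0-1}f\cdot\Delta_{j_0}f)$ term via H\"older using the two Besov norms of $f$; crucially, we keep the low-frequency sum inside (rather than applying Cauchy--Schwarz prematurely, which would only yield an $l^\infty_{j_0}$ bound) and invoke Young's convolution once more to produce the required $l^2_{j_0}$ estimate.

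The main obstacle I expect is the Bernstein bound in \eqref{g2_est1}: the improvement comes from running Bernstein at the output scale $2^{j_0}$ rather than the input scales $2^k,2^l$, which is what makes the hypothesis $s_1>s$ sufficient (rather than requiring something stronger, such as $g\in B^{s_1}_{p_4,1}$). A secondary difficulty is that the low-frequency block $\Delta_{\leq 0}$ must be treated separately throughout in order to respect the inhomogeneous structure of the Besov norms.
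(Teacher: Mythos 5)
Your proposal is correct and follows essentially the same route as the paper: dyadic decomposition with separate treatment of the $\Delta_{\leq 0}$ block, H\"older for \eqref{g1_est} combined with the negative-exponent equivalence of Lemma \ref{lem_41}, Bernstein applied to the \emph{output} projection $\Delta_j$ together with the support constraint $l\sim l'>j$ and an $l^1*l^2$ Young bound using $s_1>s$ for \eqref{g2_est1}, plain H\"older with $\sigma>0$ supplying summability for \eqref{g2_est2}, and the symmetric low--high reduction for \eqref{g1_est2}. The only cosmetic difference is that where you invoke Young's convolution inequality directly for \eqref{g1_est} and \eqref{g1_est2}, the paper routes the same step through Lemma \ref{lem_41} and an $l^2\times l^\infty$ H\"older bound in $j$.
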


\begin{proof}
We first prove \eqref{g1_est}.
By the equivalence of the Besov norm, we get
\begin{align*}
	\| G_{1}(f,g) \|_{B_{p,2}^{-s}}
	&= \| \Delta_{\leq 0} G_{1}(f,g) \|_{L^p}  +  \left\|  \{ 2^{-js} \| \Delta_{j} G_{1}(f,g) \|_{L^p}  \}_{j=1}^{\infty} \right\|_{l^{2}}
	\\
	&\lesssim \left\| \Delta_{\leq 0}^2 ((\Delta_{\leq 3}f)g)  \right\|_{L^p}  
	+ \left\| \Delta_{\leq 0}  \Delta_{1}  ((\Delta_{\leq 4}f)g) \} \right\|_{L^p}
	\\
	&\quad + 2^{-s} \| \Delta_{1} \Delta_{\leq 0} ((\Delta_{\leq 3}f)g) \|_{L^p}
	\\
	&\quad + \left\|  \{ 2^{-js} \| \Delta_{j}  \sum_{k=j-1}^{j+1} \Delta_{k}  ((\Delta_{\leq k+ 3}f)g) ] \|_{L^p}  \}_{j=1}^{\infty}  \right\|_{l^{2}}
	 \\
	 &=:A+B+C+D.
\end{align*}
It follows from the $L^p$-boundedness of the projections $\Delta_j$ and $\Delta_{\leq j}$ and the H\"{o}lder inequality that 
\begin{align*}
	A + B + C \lesssim \| (\Delta_{\leq 4}f)\|_{L^{p_1}} \|g \|_{L^{p_2}}.
\end{align*}
Moreover, we have
\begin{align*}
	D&=  \left\|  \{ 2^{-js} \| \Delta_{j}  \sum_{k=j-1}^{j+1} \Delta_{k}  ((\Delta_{\leq k+ 3}f)g) ] \|_{L^p}  \}_{j=1}^{\infty}  \right\|_{l^{2}}
	\\
	&\lesssim  \left\|  \{  \sum_{k=j-1}^{j+1}  2^{-js} \| (\Delta_{\leq k+ 3}f) \|_{L^{p_1}}  \}_{j=1}^{\infty} \right\|_{l^{2}}  \|g \|_{L^{p_2}}
	\\
	 &\lesssim \left\|  \{   2^{-js} \| (\Delta_{\leq j+4}f) \|_{L^{p_1}}  \}_{j=1}^{\infty} \right\|_{l^{2}}  \|g \|_{L^{p_2}}
	  \\
	 &\lesssim \left\|  \{   2^{-js} \| (\Delta_{\leq j}f) \|_{L^{p_1}}  \}_{j=4}^{\infty} \right\|_{l^{2}}  \|g \|_{L^{p_2}}.
\end{align*}
Combining the estimates of $A,B,C$, and $D$, we obtain
\begin{align*}
	\| G_{1}(f,g) \|_{B_{p,2}^{-s}} 
	&\lesssim  \left( \| (\Delta_{\leq 4}f)\|_{L^{p_1}}  + \left\|  \{ 2^{-js} \| (\Delta_{\leq j}f) \|_{L^{p_1}}  \}_{j=4}^{\infty} \right\|_{l^{2}} \right)  \|g \|_{L^{p_2}}
	\\
	&\sim  \| f \|_{B_{p_1,2}^{-s}} \| g \|_{L^{p_2}}.
\end{align*}
Next, we show \eqref{g2_est1}.
In the same way as before, we have
\begin{align*}
	\| G_{2}(f,g) \|_{B_{p,2}^{-s}} 
	&\lesssim \|  \Delta_{\leq 0}((\Delta_{>3}f)\Delta_{>1}g)\|_{L^p}  
	+  \| \Delta_{1} \Delta_{\leq 0}((\Delta_{>3}f)\Delta_{>1}g) \|_{L^p} 
	\\
	&+\| \Delta_{\leq 0}\Delta_{1} ((\Delta_{>4}f)\Delta_{> 2}g)\|_{L^p}  
	\\
	&+  \left\|  \{ 2^{-js} \| \Delta_{j} \sum_{k=j-1}^{j+1} \Delta_{k} ((\Delta_{>k+3}f)\Delta_{> k+1}g) \|_{L^p}  \}_{j=1}^{\infty} \right\|_{l^{2}}
	\\
	&=:A+B+C+D.
\end{align*}
Noting the supports of
$\widehat{\Delta_{>k+3}f}$ and $\widehat{\Delta_{> k+1}g}$,
we have
\begin{align*}
	D
	&=  \left\|  \{ 2^{-js} \| \sum_{k=j-1}^{j+1} \sum_{l>k+3,|l-l'|\leq 2}  \Delta_{j} \Delta_{k} ((\Delta_{l}f)\Delta_{l'}g) \|_{L^p}  \}_{j=1}^{\infty} \right\|_{l^{2}}
	\\
	&\lesssim \left\|  \{ \sum_{l>j+2,|l-l'|\leq 2}  2^{-js} 2^{js_1} \|  ((\Delta_{l}f)\Delta_{l'}g) \|_{L^{\frac{pd}{d+ps_1}}}  \}_{j=1}^{\infty} \right\|_{l^{2}}
	\\
	&\lesssim \left\|  \{ \sum_{l>j+2,|l-l'|\leq 2}  2^{-js} 2^{js_1} \| \Delta_{l}f \|_{L^{p_3}} \| \Delta_{l'}g \|_{L^{p_4}}  \}_{j=1}^{\infty} \right\|_{l^{2}}
	\\
	&\lesssim \left\|  \{ \sum_{l>j+2,|l-l'|\leq 2}  2^{(j-l)(s_1-s)} 2^{-ls} \| \Delta_{l}f \|_{L^{p_3}} 2^{l's_1} \| \Delta_{l'}g \|_{L^{p_4}}  \}_{j=1}^{\infty} \right\|_{l^{2}}
	\\
	&\lesssim \left\|  \{ \sum_{l>j+2}  2^{(j-l)(s_1-s)} 2^{-ls} \| \Delta_{l}f \|_{L^{p_3}} \}_{j=1}^{\infty}  \right\|_{l^{2}}
	\|  2^{l's_1} \| \Delta_{l'}g \|_{L^{p_4}} \|_{l^{\infty}(\mathbb{N})}
	\\
	&\lesssim \left\|  \{ \sum_{l>j+2}  2^{(j-l)(s_1-s)} 2^{-ls} \| \Delta_{l}f \|_{L^{p_3}} \}_{j=1}^{\infty}  \right\|_{l^{2}}
	\|  g \|_{B_{p_4,\infty}^{s_1}}.
\end{align*}
By noting $s_1>s$ and the Young inequality, we get 
\begin{align*}
	&\left\|  \{ \sum_{l>j+2}  2^{(j-l)(s_1-s)} 2^{-ls} \| \Delta_{l}f \|_{L^{p_3}} \}_{j=1}^{\infty}  \right\|_{l^{2}}
	\lesssim \| f \|_{B_{p_3,2}^{-s}}.
\end{align*}
In the same way as above, we can calculate $A,B,C$ and have
\begin{align*}
	A + B + C \lesssim \| f \|_{B_{p_3,2}^{-s}} \| g \|_{B_{p_4,\infty}^{s_1}}.
\end{align*}
Therefore, we obtain \eqref{g2_est1}.
The inequality \eqref{g2_est2} can be obtained in a similar way. We omit the detail. 

Finally, we remark on the proof of \eqref{g1_est2}.
As before, we write
\begin{align*}
	\| G_1(f,f) \|_{B^{\sigma}_{p,2}}
	&= \| \Delta_{\le 0}^2 ((\Delta_{\le 3}f)f) \|_{L^p}
		+ \| \Delta_{\le 0} \Delta_1 ((\Delta_{\le 4}f)f) \|_{L^p} \\
	&\quad + 2^{\sigma} \| \Delta_1 \Delta_{\le 0} ((\Delta_{\le 3}f)f) \|_{L^p} \\
	&\quad + \left\| \left\{
		2^{j \sigma} \left\| \Delta_j \sum_{k=j-1}^{j+1} \Delta_k ((\Delta_{\le k+3}f)f) \right\|_{L^p}
			\right\}_{j=1}^{\infty} \right\|_{l^2} \\
	&=: A+B+C+D.
\end{align*}
We immediately obtain
\begin{align*}
	A + B + C \lesssim \| \Delta_{\le 3} f \|_{L^{p_7}} \| \Delta_{\le 3} f \|_{L^{p_8}}.
\end{align*}
For the term $D$,
noting the support of
$\mathcal{F}(\Delta_j \Delta_k ((\Delta_{\le k+3}f)f))$,
we calculate
\begin{align*}
	D&\lesssim
		\left\| \left\{
		2^{j \sigma} \left\| \Delta_j (( \Delta_{\le j+3} f) \Delta_j f) \right\|_{L^p}
		\right\}_{j=1}^{\infty} \right\|_{l^2} \\
	&\lesssim
		\left\| \left\{
		2^{-sj} \| \Delta_{\le j+3} f \|_{L^{p_7}}
		2^{(s+\sigma)j} \| \Delta_j f \|_{L^{p_8}} \right\}_{j=1}^{\infty} \right\|_{l^2} \\
	&\lesssim
		\left\| \left\{
		2^{-sj} \| \Delta_{\le j+3} f \|_{L^{p_7}} \right\}_{j=1}^{\infty} \right\|_{l^2}
		\left\| \left\{ 2^{(s+\sigma)j} \| \Delta_j f \|_{L^{p_8}} \right\}_{j=1}^{\infty} \right\|_{l^{\infty}} \\
	&\lesssim
		\| f \|_{B^{-s}_{p_7, 2}} \| f \|_{B^{s+\sigma}_{p_8,\infty}}.
\end{align*}
Here, we have used Lemma \ref{lem_41} for the last inequality.
\end{proof}

\subsection{Proof of the unconditional uniqueness}
We give the proof of Theorem \ref{thm_uu}.
The proof is
based on the Besov-type inhomogeneous endpoint Strichartz estimates and paraproduct estimates.
The argument is almost the same as those of
\cite[Theorem 3.4]{Bu13} and \cite[Proposition 2]{Pl03}.
However, for readers' convenience, we give a complete proof.
\begin{proof}
We assume that $u$ and $v$ are solutions to \eqref{NLDW}
on a time interval $I$
with the same initial data
$(u_0, u_1)\in H^1(\mathbb{R}^d)\times L^2(\mathbb{R}^d)$
in the sense of Definition \ref{def1.1}.
By the proof of Theorem \ref{thm_lwp},
we have construct a local solution $u$ satisfying
\begin{align}%
\label{4_sol_est}
	\| u \|_{L_t^{\frac{2(d+1)}{d-2}}(I_0 ;
						B^{\frac{d}{2(d-1)}}_{\frac{2(d^2-1)}{d^2-2d+3},2}(\mathbb{R}^d))}
		\lesssim \| (u_0, u_1) \|_{H^1\times L^2},
\end{align}%
because
$(\frac{2(d+1)}{d-2}, \frac{2(d^2-1)}{d^2-2d+3})$
is admissible with $1-\gamma = \frac{d}{2(d-1)}$ and $\delta =0$. 
Therefore, we may assume this bound for $u$ without loss of generality.

In the following, we divide the proof into the cases $d \ge 5$ and $d =4$.
First, we treat the case $d\ge 5$.
Putting $F(u) = |u|^{\frac{4}{d-2}}u$ and $w = u-v$, we have
\begin{align}%
\label{4_w_eq}
	w(t,x) = \int_0^t \mathcal{D}(t-s) \left( F(u) - F(v) \right) \,ds.
\end{align}%
To express the nonlinear term by $u$ and $w$, we calculate
\begin{align*}%
	F(u) - F(v) &=
		w \int_0^1 F'(\lambda u + (1-\lambda) v)\,d\lambda\\
		&= w \int_0^1 \left( F'(u-(1-\lambda)w) - F'(u) \right)\,d\lambda + w F'(u)\\
		&= w H(u,w) + w F'(u),
\end{align*}%
where
$H := \int_0^1 \left( F'(u-(1-\lambda)w) - F'(u) \right)\,d\lambda$.
Since
$F'(u) = (1+\frac{4}{d-2})|u|^{\frac{4}{d-2}}$
is
$\frac{4}{d-2}$-H\"{o}lder continuous on $\mathbb{R}$, we obtain
\begin{align}%
\label{4_estH}
	|H(u,w)| &\lesssim
		\int_0^1 | (1-\lambda) w |^{\frac{4}{d-2}} \,d\lambda
		\lesssim |w|^{\frac{4}{d-2}}.
\end{align}%
Let $I_0$ be a small time interval such that $0 \in I_0$ determined later.
We decompose the product
$w H(u,w)$
into
$w H (u,w) = G_1(w, H(u,w)) + G_2(w, H(u,w))$,
where
$G_1$ and $G_2$ are paraproducts defined by \eqref{g1} and \eqref{g2}, respectively.
in the same way, we also decompose
$w F'(u)$
into
$w F'(u) = G_1(w, F'(u)) + G_2(w,F'(u))$.
By applying Proposition \ref{prop1.6} to \eqref{4_w_eq} with
$s = -\frac{1}{d-1}$, $r = \frac{2(d-1)}{d-3}, q=2$, we have
\begin{align}%
\label{4_w_str}
	\| w \|_{L^2_t (I_0 ; B^{-\frac{1}{d-1}}_{\frac{2(d-1)}{d-3}, 2} (\mathbb{R}^d))}
	&\lesssim
		\| G_1(w, F'(u)) \|_{L_t^{\frac{2(d+1)}{d+5}} (I_0 ;
						B^{-\frac{1}{d-1}}_{\frac{2(d^2-1)}{d^2+2d-7},2}(\mathbb{R}^d))}\\
\notag
	&\quad +
	\| G_2(w, F'(u)) \|_{L_t^{\frac{2(d+1)}{d+5}} (I_0 ;
						B^{-\frac{1}{d-1}}_{\frac{2(d^2-1)}{d^2+2d-7},2}(\mathbb{R}^d))}\\
\notag
	&\quad +
	\| G_1(w, H(u,w)) \|_{L_t^{2} (I_0 ;
						B^{\frac{1}{d-1}}_{\frac{2(d-1)}{d+1},2}(\mathbb{R}^d))}\\
\notag
	&\quad +
	\| G_2(w, H(u,w)) \|_{L_t^{2} (I_0 ;
						B^{\frac{1}{d-1}}_{\frac{2(d-1)}{d+1},2}(\mathbb{R}^d))}.
\end{align}%
Here, we note that
$\gamma = \frac{d+1}{2(d-1)}$,
and in the first and second terms in RHS, we have taken
$\tilde{q}' = \frac{2(d+1)}{d+5}$, $\tilde{r}' = \frac{2(d^2-1)}{d^2+2d-7}$
(in that case, $\delta = 0$ and $\tilde{\gamma} = \frac{d-3}{2(d-1)}$, which give
$\gamma + \tilde{\gamma} +\delta - 1 = 0$).
Also, in the third and forth terms in RHS, we have taken
$\tilde{q}' = 2$, $\tilde{r}' = \frac{2(d-1)}{d+1}$
(in that case, $\delta = 0$ and $\tilde{\gamma} = \frac{d+1}{2(d-1)}$, which give
$\gamma + \tilde{\gamma} + \delta - 1 = \frac{2}{d-1}$).

We first give the estimate for 
$G_1(w, F'(u))$
in \eqref{4_w_str}.
Applying \eqref{g1_est} of Lemma \ref{lem_pp} with
$s = \frac{1}{d-1}$,
$p = \frac{2(d^2-1)}{d^2+2d-7}$,
$p_1 = \frac{2(d-1)}{d-3}$,
and
$p_2 = \frac{d+1}{2}$,
and then using the H\"{o}lder inequality in time with
$\frac12 + \frac{2}{d+1} = \frac{d+5}{2(d+1)}$,
we see that
\begin{align*}%
	\| G_1(w, F'(u)) \|_{L_t^{\frac{2(d+1)}{d+5}} (I_0 ;
						B^{-\frac{1}{d-1}}_{\frac{2(d^2-1)}{d^2+2d-7},2}(\mathbb{R}^d))}
	&\lesssim
		\| w \|_{L_t^2(I_0 ; B^{-\frac{1}{d-1}}_{\frac{2(d-1)}{d-3},2}(\mathbb{R}^d))}
		\| F'(u) \|_{L_t^{\frac{d+1}{2}}(I_0 ; L_x^{\frac{d+1}{2}}(\mathbb{R}^d))} \\
	&\lesssim
		\| w \|_{L_t^2(I_0 ; B^{-\frac{1}{d-1}}_{\frac{2(d-1)}{d-3},2}(\mathbb{R}^d))}
		\| u \|_{L_t^{\frac{2(d+1)}{d-2}}( I_0 ; L_x^{\frac{2(d+1)}{d-2}}(\mathbb{R}^d))}^{\frac{4}{d-2}} \\
	&\lesssim
		\| w \|_{L_t^2(I_0 ; B^{-\frac{1}{d-1}}_{\frac{2(d-1)}{d-3},2}(\mathbb{R}^d))}
		\| u \|_{L_t^{\frac{2(d+1)}{d-2}}(I_0 ;
							B^{\frac{d}{2(d-1)}}_{\frac{2(d^2-1)}{d^2-2d+3}, 2}(\mathbb{R}^d))}^{\frac{4}{d-2}}.
\end{align*}%
Here, in the second and third lines we have used
$|F'(u)| \lesssim |u|^{\frac{4}{d-2}}$ and the embedding
$B^{\frac{d}{2(d-1)}}_{\frac{2(d^2-1)}{d^2-2d+3}, 2}(\mathbb{R}^d) \subset L^{\frac{2(d+1)}{d-2}}(\mathbb{R}^d)$,
respectively.

Secondly, we give the estimate for $G_2(w, F'(u))$
in \eqref{4_w_str}.
Applying \eqref{g2_est1} of Lemma \ref{lem_pp} with
$s = \frac{2}{d-1}$,
$p = \frac{2(d^2-1)}{d^2+2d-7}$,
$p_3 = \frac{2(d-1)}{d-3}$,
$p_4 = \frac{d(d^2-1)}{2(d^2+1)}$,
and
$s_1 = \frac{2}{d-1}$,
and then using the H\"{o}lder inequality in time with
$\frac12 + \frac{2}{d+1} = \frac{d+5}{2(d+1)}$,
we see that
\begin{align*}%
	\| G_2(w, F'(u)) \|_{L_t^{\frac{2(d+1)}{d+5}} (I_0 ; 
					B^{-\frac{1}{d-1}}_{\frac{2(d^2-1)}{d^2+2d-y},2} (\mathbb{R}^d))}
	&\lesssim
	\| w \|_{L_t^2 (I_0 ; B^{-\frac{1}{d-1}}_{\frac{2(d-1)}{d-3},2}(\mathbb{R}^d))}
	\| F'(u) \|_{L_t^{\frac{d+1}{2}}(I_0 ; B^{\frac{2}{d-1}}_{\frac{d(d^2-1)}{2(d^2+1)},\infty}(\mathbb{R}^d))} \\
	&\lesssim
	\| w \|_{L_t^2 (I_0 ; B^{-\frac{1}{d-1}}_{\frac{2(d-1)}{d-3},2}(\mathbb{R}^d))}
	\| u \|_{L_t^{\frac{2(d+1)}{d-2}}(I_0 ;
			B^{\frac{d-2}{2(d-1)}}_{\frac{2d(d^2-1)}{(d-2)(d^2+1)},\infty} (\mathbb{R}^d))}^{\frac{4}{d-2}} \\
	&\lesssim
	\| w \|_{L_t^2 (I_0 ; B^{-\frac{1}{d-1}}_{\frac{2(d-1)}{d-3},2}(\mathbb{R}^d))}
	\| u \|_{L_t^{\frac{2(d+1)}{d-2}}(I_0 ;
			B^{\frac{d}{2(d-1)}}_{\frac{2(d^2-1)}{d^2-2d+3},2} (\mathbb{R}^d))}^{\frac{4}{d-2}}.
\end{align*}%
Here, in the second and third lines we have used
$|F'(u)| \lesssim |u|^{\frac{4}{d-2}}$ and the embedding
$B^{\frac{d}{2(d-1)}}_{\frac{2(d^2-1)}{d^2-2d+3},2} (\mathbb{R}^d)
\subset B^{\frac{d-2}{2(d-1)}}_{\frac{2d(d^2-1)}{(d-2)(d^2+1)},\infty} (\mathbb{R}^d)$,
respectively.

Thirdly, we give the estimate for $G_2(w, H(u,w))$
in \eqref{4_w_str}.
Applying \eqref{g2_est2} of Lemma \ref{lem_pp} with
$\sigma = \frac{1}{d-1}$,
$p = \frac{2(d-1)}{d+1}$,
$p_5 = \frac{2(d-1)}{d-3}$,
$p_6 = \frac{d-1}{2}$,
and
$s = \frac{1}{d-1}$,
and then using the H\"{o}lder inequality in time with
$\frac{1}{2} = \frac{1}{2} + \frac{1}{\infty}$, we see that
\begin{align*}%
	\| G_2(w, H(u,w)) \|_{L_t^2 (I_0 ; B^{\frac{1}{d-1}}_{\frac{2(d-1)}{d+1},2} (\mathbb{R}^d))}
	&\lesssim
	\| w \|_{L_t^2 (I_0 ; B^{-\frac{1}{d-1}}_{\frac{2(d-1)}{d-3},2}(\mathbb{R}^d))}
	\| H(u,w) \|_{L_t^{\infty}(I_0 ; B^{\frac{2}{d-1}}_{\frac{d-1}{2},\infty}(\mathbb{R}^d))}.
\end{align*}%
By the Gagliardo--Nirenberg interpolation inequality,
the last term of RHS is further estimated as
\begin{align*}%
	&\| H(u,w) \|_{L_t^{\infty}(I_0 ; B^{\frac{2}{d-1}}_{\frac{d-1}{2},\infty}(\mathbb{R}^d))} \\
	&\lesssim
	\| H(u,w) \|_{L_t^{\infty}(I_0 ; B^{0}_{\frac{d}{2}, \infty}(\mathbb{R}^d))}^{\frac12}
	\| H(u,w) \|_{L_t^{\infty}(I_0 ; B^{\frac{4}{d-1}}_{\frac{d(d-1)}{2(d+1)},\infty} (\mathbb{R}^d))}^{\frac12}.
\end{align*}%
By \eqref{4_estH} and Lemma \ref{lem_b} in Appendix C with $f(z) = |z|^{\frac{4}{d-2}}$,
we estimate
\begin{align*}%
	\| H(u,w) \|_{L_t^{\infty}(I_0 ; B^{0}_{\frac{d}{2}, \infty}(\mathbb{R}^d))}^{\frac12}
	&\lesssim
	\| w \|_{L_t^{\infty}(I_0 ; B^{0}_{\frac{2d}{d-2}, \infty}(\mathbb{R}^d))}^{\frac{2}{d-2}}\\
	&\lesssim
	\| w \|_{L_t^{\infty}(I_0 ; H^1(\mathbb{R}^d)}^{\frac{2}{d-2}},
\end{align*}%
and
\begin{align*}%
	&\| H(u,w) \|_{L_t^{\infty}(I_0 ; B^{\frac{4}{d-1}}_{\frac{d(d-1)}{2(d+1)},\infty} (\mathbb{R}^d))}^{\frac12}\\
	&\lesssim
	\left( \int_0^1
		\| F'(u-(1-\lambda)w) - F'(u) \|_{L_t^{\infty}(I_0 ; B^{\frac{4}{d-1}}_{\frac{d(d-1)}{2(d+1)},\infty}(\mathbb{R}^d))}
		\,d\lambda \right)^{\frac12} \\
	&\lesssim
	\left( \int_0^1 \left( \| u-(1-\lambda)w \|_{L_t^{\infty}(I_0 ;
							B^{\frac{d-2}{d-1}}_{\frac{2d(d-1)}{(d-2)(d+1)},\infty}(\mathbb{R}^d))}^{\frac{4}{d-2}}
				+ \| u \|_{L_t^{\infty}(I_0 ;
							B^{\frac{d-2}{d-1}}_{\frac{2d(d-1)}{(d-2)(d+1)},\infty}(\mathbb{R}^d))}^{\frac{4}{d-2}}
				\right)\,d\lambda \right)^{\frac12} \\
	&\lesssim
	\| w \|_{L_t^{\infty}(I_0 ; H^1(\mathbb{R}^d))}^{\frac{2}{d-2}}
	+ \| u \|_{L_t^{\infty}(I_0 ; H^1(\mathbb{R}^d))}^{\frac{2}{d-2}}.
\end{align*}%
Here, we have also used the embedding
$H^1(\mathbb{R}^d) \subset B^{0}_{\frac{2d}{d-2}, \infty}(\mathbb{R}^d)$
and
$H^1(\mathbb{R}^d) \subset B^{\frac{d-2}{d-1}}_{\frac{2d(d-1)}{(d-2)(d+1)},\infty}(\mathbb{R}^d)$.
Consequently, we have
\begin{align}%
\label{4_g2H_est}
	\| G_2(w, H(u,w)) \|_{L_t^2 (I_0 ; B^{\frac{1}{d-1}}_{\frac{2(d-1)}{d+1},2} (\mathbb{R}^d))}
	&\lesssim
	\| w \|_{L_t^2 (I_0 ; B^{-\frac{1}{d-1}}_{\frac{2(d-1)}{d-3},2}(\mathbb{R}^d))}
	\| w \|_{L_t^{\infty}(I_0 ; H^1(\mathbb{R}^d))}^{\frac{2}{d-2}} \\
\notag
	&\quad \times
	\left( \| w \|_{L_t^{\infty}(I_0 ; H^1(\mathbb{R}^d))}^{\frac{2}{d-2}}
	+ \| u \|_{L_t^{\infty}(I_0 ; H^1(\mathbb{R}^d))}^{\frac{2}{d-2}} \right).
\end{align}%

Finally, we give the estimate for $G_1(w, H(u,w))$
in \eqref{4_w_str}.
We further decompose $G_1(w, H(u,w))$ into
\begin{align*}%
	G_1(w, H(u,w))
	&= \sum_{j \in \mathbb{Z}} \Delta_j \left( (\Delta_{\le j+3} w ) H(u,w) \right)\\
	&= \sum_{j\in \mathbb{Z}} \Delta_j \left( (\Delta_{\le j+3} w ) (\Delta_{\ge j-3} H(u,w)) \right)\\
	&\quad +\sum_{j\in \mathbb{Z}}
			\Delta_j \left( (\Delta_{j-2\le \cdot \le j+3} w ) (\Delta_{< j-3} H(u,w)) \right) \\
	&=: G_{11}(w, H(u,w)) + G_{12}(w, H(u,w)).
\end{align*}%
Here we remark that
$2^{j-1}<|\xi| \le 2^j$, $|\xi -\eta| \le 2^{j+3}$, and $|\eta| \le 2^{j-3}$
imply
$2^{j-2}\le |\xi - \eta| \le 2^{j+3}$.
We estimate $G_{11}(w, H(u,w))$ in the same way as \eqref{4_g2H_est} and have
\begin{align*}%
	\| G_{11}(w, H(u,w)) \|_{L_t^2 (I_0 ; B^{\frac{1}{d-1}}_{\frac{2(d-1)}{d+1},2} (\mathbb{R}^d))}
	&\lesssim
	\| w \|_{L_t^2 (I_0 ; B^{-\frac{1}{d-1}}_{\frac{2(d-1)}{d-3},2}(\mathbb{R}^d))}
	\| w \|_{L_t^{\infty}(I_0 ; H^1(\mathbb{R}^d))}^{\frac{2}{d-2}} \\
\notag
	&\quad \times
	\left( \| w \|_{L_t^{\infty}(I_0 ; H^1(\mathbb{R}^d))}^{\frac{2}{d-2}}
	+ \| u \|_{L_t^{\infty}(I_0 ; H^1(\mathbb{R}^d))}^{\frac{2}{d-2}} \right).
\end{align*}%
Next, we estimate
$G_{12}(w, H(u,w))$.
By the definition of the Besov space and
the H\"{o}lder inequality with
$\frac{d+1}{2(d-1)} = \frac{d^3-3d^2 + 6d -2}{2(d^3-d^2)} + \frac{2d-1}{d^2}$,
we deduce
\begin{align*}%
	&\| G_{12}(w, H(u,w)) \|_{L_t^2 (I_0 ; B^{\frac{1}{d-1}}_{\frac{2(d-1)}{d+1},2}(\mathbb{R}^d))} \\
	&\lesssim
	\left\| \left( \sum_{j\in\mathbb{Z}}
		\left( 2^{\frac{1}{d-1}j} \| \Delta_{j-2\le \cdot \le j+3} w \|_{L_x^{\frac{2(d^3-d^2)}{d^3-3d^2+6d-2}}(\mathbb{R}^d)}
			\| \Delta_{<j-3} H(u,w) \|_{L_x^{\frac{d^2}{2d-1}}(\mathbb{R}^d)}
		\right)^2 \right)^{\frac12} \right\|_{L_t^2(I_0)} \\
	&\lesssim
	\left\| 
		\| \{ 2^{(\frac{1}{d-1}+\frac{1}{d})j}
			\| \Delta_{j} w \|_{L_x^{\frac{2(d^3-d^2)}{d^3-3d^2+6d-2}}(\mathbb{R}^d)}
		\} \|_{l^2_j}
		\| \{ 2^{-\frac{1}{d}j} \| \Delta_{<j-3} H(u,w) \|_{L_x^{\frac{d^2}{2d-1}}(\mathbb{R}^d)} \} \|_{l^{\infty}_j}
		\right\|_{L_t^2(I_0)} \\
	&\lesssim
	\left\| \| w \|_{B^{\frac{1}{d-1}+\frac{1}{d}}_{\frac{2(d^3-d^2)}{d^3-3d^2+6d-2},2}(\mathbb{R}^d)}
		\| H(u,w) \|_{B^{-\frac{1}{d}}_{\frac{d^2}{2d-1}, \infty}(\mathbb{R}^d)}
		\right\|_{L_t^2(I_0)}.
\end{align*}%
Here, for the second inequality we have used that
$2^{\frac{1}{d-1}j} = 2^{(\frac{1}{d-1}+\frac{1}{d})j} 2^{-\frac{1}{d}j}$
and the H\"{o}lder inequality for the variable $j$,
and for the third inequality we have used
Lemma \ref{lem_41} for the term
$\| \{ 2^{-\frac{1}{d}j} \| \Delta_{<j-3} H(u,w) \} \|_{L_x^{\frac{d^2}{2d-1}}(\mathbb{R}^d)} \|_{l^{\infty}_j}$.
Moreover, by the Sobolev embedding, we estimate
\begin{align*}%
	\| H(u,w) \|_{B^{-\frac{1}{d}}_{\frac{d^2}{2d-1}, \infty}(\mathbb{R}^d)}
	&\lesssim \| H(u,w) \|_{L_x^{\frac{d}{2}}(\mathbb{R}^d)} \\
	&\lesssim \| w \|_{L_x^{\frac{2d}{d-2}}(\mathbb{R}^d)}^{\frac{4}{d-2}} \\
	&\lesssim \| w \|_{B^{\frac{1}{d-1}+\frac{1}{d}}_{\frac{2(d^3-d^2)}{d^3-3d^2+6d-2},2}(\mathbb{R}^d)}^{\frac{4}{d-2}}.
\end{align*}%
Plugging it into the previous inequality, we have
\begin{align*}%
	\| G_{12}(w, H(u,w)) \|_{L_t^2 (I_0 ; B^{\frac{1}{d-1}}_{\frac{2(d-1)}{d+1},2}(\mathbb{R}^d))}
	\lesssim
	\left\|
	\| w \|_{B^{\frac{1}{d-1}+\frac{1}{d}}_{\frac{2(d^3-d^2)}{d^3-3d^2+6d-2},2}(\mathbb{R}^d)}^{\frac{d+2}{d-2}}
	\right\|_{L_t^2(I_0)}.
\end{align*}%
By the Gagliardo--Nirenberg interpolation inequality
\begin{align*}%
	\| w \|_{B^{\frac{1}{d-1}+\frac{1}{d}}_{\frac{2(d^3-d^2)}{d^3-3d^2+6d-2},2}(\mathbb{R}^d)}
	\lesssim
	\| w \|_{B^{-\frac{1}{d-1}}_{\frac{2(d-1)}{d-3}, 2}(\mathbb{R}^d)}^{1+\frac{1}{d^2} - \frac{3}{d}}
	\| w \|_{H^1(\mathbb{R}^d)}^{\frac{3}{d}-\frac{1}{d^2}},
\end{align*}%
we estimate
\begin{align*}%
	\| G_{12}(w, H(u,w)) \|_{L_t^2 (I_0 ; B^{\frac{1}{d-1}}_{\frac{2(d-1)}{d+1},2}(\mathbb{R}^d))} 
	&\lesssim \left\|
		\| w \|_{B^{-\frac{1}{d-1}}_{\frac{2(d-1)}{d-3}, 2}(\mathbb{R}^d)}^{(1+\frac{1}{d^2} - \frac{3}{d})\frac{d+2}{d-2}}
	\| w \|_{H^1(\mathbb{R}^d)}^{(\frac{3}{d}-\frac{1}{d^2})\frac{d+2}{d-2}}
	\right\|_{L^2_t(I_0)}.
\end{align*}
When $d \ge 5$,
we see that
$(1+\frac{1}{d^2} - \frac{3}{d})\frac{d+2}{d-2} > 1$
holds,
and hence,
the Sobolev embedding
$H^1(\mathbb{R}^d) \subset B^{-\frac{1}{d-1}}_{\frac{2(d-1)}{d-3}, 2}(\mathbb{R}^d)$
implies
\begin{align*}
	\| G_{12}(w, H(u,w)) \|_{L_t^2 (I_0 ; B^{\frac{1}{d-1}}_{\frac{2(d-1)}{d+1},2}(\mathbb{R}^d))}
	&\lesssim
		\| w \|_{L_t^2(I_0 ; B^{-\frac{1}{d-1}}_{\frac{2(d-1)}{d-3}, 2}(\mathbb{R}^d))}
		\| w \|_{L_t^{\infty}(I_0 ; H^1(\mathbb{R}^d))}^{\frac{4}{d-2}}.
\end{align*}%

Taking all estimates into account, we have
\begin{align*}%
	&\| w \|_{L_t^2(I_0 ; B^{-\frac{1}{d-1}}_{\frac{2(d-1)}{d-3}, 2}(\mathbb{R}^d))}\\
	&\lesssim
		\| w \|_{L_t^2(I_0 ; B^{-\frac{1}{d-1}}_{\frac{2(d-1)}{d-3}, 2}(\mathbb{R}^d))} \\
	&\quad \times
		\left( \| w \|_{L_t^{\infty}(I_0 ; H^1(\mathbb{R}^d))}^{\frac{4}{d-2}}
			+ \| u \|_{L_t^{\frac{2(d+1)}{d-2}}(I_0 ;
						B^{\frac{d}{2(d-1)}}_{\frac{2(d^2-1)}{d^2-2d+3},2}(\mathbb{R}^d))}^{\frac{4}{d-2}}
			+ \| w \|_{L_t^{\infty}(I_0 ; H^1(\mathbb{R}^d))}^{\frac{2}{d-2}}
				\| u \|_{L_t^{\infty}(I_0 ; H^1(\mathbb{R}^d))}^{\frac{2}{d-2}}
		\right).
\end{align*}%
Here, we note that
$w \in C(I_0 ; H^1(\mathbb{R}^d))$
and $w(0) = 0$,
which implies
$\| w \|_{L_t^{\infty}(I_0 ; H^1(\mathbb{R}^d))} \to 0$
as $I_0$ becomes smaller.
By \eqref{4_sol_est}, we also have
\[
	\| u \|_{L_t^{\frac{2(d+1)}{d-2}}(I_0 ;
	B^{\frac{d}{2(d-1)}}_{\frac{2(d^2-1)}{d^2-2d+3},2}(\mathbb{R}^d))} \to 0
\]
as $I_0$ becomes smaller.
Therefore, taking $I_0$ sufficiently small, we see that
$w = 0$ holds on $I_0$.
Continuing this argument, we have
$w = 0$ on the whole interval $I$.
This completes the proof for the case $d \ge 5$.

For the case $d=4$, we first note that
\begin{align*}
	u^3 - v^3 = w^2 (-3u +w) + 3wu^2.
\end{align*}
In the same way to \eqref{4_w_str}, we have
\begin{align}%
\label{4_w_str_4d}
	\| w \|_{L^2_t (I_0 ; B^{-\frac{1}{3}}_{6, 2} (\mathbb{R}^4))}
	&\lesssim
		\| 3wu^2 \|_{L_t^{\frac{20}{9}} (I_0 ;
						B^{-\frac{1}{3}}_{\frac{30}{17},2}(\mathbb{R}^4))}
	+
	\| w^2(-3u+w) \|_{L_t^{2} (I_0 ;
						B^{\frac{1}{3}}_{\frac{6}{5},2}(\mathbb{R}^d))}
\end{align}%
The term
$\| 3wu^2 \|_{L_t^{\frac{20}{9}} (I_0 ;
B^{-\frac{1}{3}}_{\frac{30}{17},2}(\mathbb{R}^4))}$
can be estimated in the same manner as
$wF'(u)$ in $d \ge 5$,
because we did not use the condition $d\ge 5$ to handle it.
Therefore, we have
\begin{align}
\label{4_w_est1_4d}
	\| 3wu^2 \|_{L_t^{\frac{20}{9}} (I_0 ;
						B^{-\frac{1}{3}}_{\frac{30}{17},2}(\mathbb{R}^4))}
	&\lesssim
	\| w \|_{L_t^2(I_0 ; B^{-\frac{1}{3}}_{6,2}(\mathbb{R}^4))}
		\| u \|_{L_t^{5}(I_0 ; B^{\frac{2}{3}}_{\frac{30}{11}, 2}(\mathbb{R}^4))}^{2}
\end{align}

For the term
$\| w^2(-3u+w) \|_{L_t^{2} (I_0 ;B^{\frac{1}{3}}_{\frac{6}{5},2}(\mathbb{R}^4))}$,
by the fractional Leibniz rule and the Sobolev embedding, we have
\begin{align}
\label{4_w_est2_1_4d}
	\| w^2(-3u+w) \|_{B^{\frac{1}{3}}_{\frac{6}{5},2}(\mathbb{R}^4)}
	&\lesssim
	\| w^2 \|_{B^{\frac{1}{3}}_{\frac{12}{7},2}(\mathbb{R}^4)}
		\| -3u + w\|_{B^0_{4,2}(\mathbb{R}^4)} \\
\notag
	&\quad + \| w^2 \|_{B^0_{2,2}(\mathbb{R}^4)} \| -3u+w \|_{B^{\frac{1}{3}}_{3,2}(\mathbb{R}^4)} \\
\notag
	&\lesssim
		\| w^2 \|_{B^{\frac{1}{3}}_{\frac{12}{7},2}(\mathbb{R}^4)}
		\| -3u +w \|_{H^1(\mathbb{R}^4)}.
\end{align}
Next, we show the product estimate
\begin{align}
\label{4_w_pro}
	\| w^2 \|_{B^{\frac{1}{3}}_{\frac{12}{7},2}(\mathbb{R}^4)}
	\lesssim
	\| w \|_{H^1(\mathbb{R}^4)}
	\| w \|_{B^{-\frac{1}{3}}_{6,2}(\mathbb{R}^4)}.
\end{align}
Indeed, applying the paraproduct
$w^2 = G_1(w,w) + G_2(w,w)$,
we have
\begin{align*}
	\| G_j (w,w) \|_{B^{\frac{1}{3}}_{\frac{12}{7},2}(\mathbb{R}^4)}
		&\lesssim
		\| w \|_{B^{-\frac{1}{3}}_{6,2}(\mathbb{R}^4)}
		\| w \|_{B^{\frac{2}{3}}_{\frac{12}{5},\infty}(\mathbb{R}^4)} \\
		&\lesssim
		\| w \|_{B^{-\frac{1}{3}}_{6,2}(\mathbb{R}^4)}
		\| w \|_{H^1(\mathbb{R}^4)}
\end{align*}
for $j=1,2$.
Here, we have used \eqref{g2_est2} and \eqref{g1_est2} in Lemma \ref{lem_41}, respectively,  and
the Sobolev embedding. This proves \eqref{4_w_pro}.
Combining \eqref{4_w_est2_1_4d} and \eqref{4_w_pro}, we see
\begin{align*}
	\| w^2(-3u+w) \|_{B^{\frac{1}{3}}_{\frac{6}{5},2}(\mathbb{R}^4)}
	&\lesssim
	\| w \|_{B^{-\frac{1}{3}}_{6,2}(\mathbb{R}^4)}
	\| w \|_{H^1(\mathbb{R}^4)}
	\| -3u + w\|_{B^0_{4,2}(\mathbb{R}^4)}.
\end{align*}
It follows from the H\"{o}lder inequality that
\begin{align}
\label{4_w_est2_2_4d}
	&\| w^2(-3u+w) \|_{L^2_t(I_0; B^{\frac{1}{3}}_{\frac{6}{5},2}(\mathbb{R}^4))} \\
\notag
	&\lesssim
	\| w \|_{L^2_t(I_0; B^{-\frac{1}{3}}_{6,2}(\mathbb{R}^4))}
	\| w \|_{L^{\infty}_t(I_0; H^1(\mathbb{R}^4))}
	\| -3u + w\|_{L^{\infty}_t(I_0; B^0_{4,2}(\mathbb{R}^4))}.
\end{align}

Applying the estimates \eqref{4_w_est1_4d} and \eqref{4_w_est2_2_4d} to
\eqref{4_w_str_4d}, we conclude
\begin{align*}
	\| w \|_{L^2_t (I_0 ; B^{-\frac{1}{3}}_{6, 2} (\mathbb{R}^4))}
	&\lesssim 
	\| w \|_{L^2_t(I_0; B^{-\frac{1}{3}}_{6,2}(\mathbb{R}^4))} \\
	&\times \left(
		\| w \|_{L^{\infty}_t(I_0; H^1(\mathbb{R}^4))}
	\| -3u + w\|_{L^{\infty}_t(I_0; B^0_{4,2}(\mathbb{R}^4))}
	+ \| u \|_{L_t^{5}(I_0 ; B^{\frac{2}{3}}_{\frac{30}{11}, 2}(\mathbb{R}^4))}^{2}
	\right)
\end{align*}
From this estimate, in the same way as in the case $d\ge 5$,
we conclude
$w=0$
and complete the proof.
\end{proof}


\appendix


\section{Besov type Strichartz estimates}
\label{secA}

Using the following lemma, we get the Besov type Strichartz estimates, Proposition \ref{prop1.5} and \ref{prop1.6}. We only give the proof of the lemma and omit the proof from the lemma to Proposition \ref{prop1.5} and \ref{prop1.6}.

\begin{lemma}
Let $s \in \R$. Assume that $(q,r)$ satisfies the assumptions in Proposition \ref{prop1.1}. Let $\gamma$ be as in Proposition \ref{prop1.1}. Then we have the following Besov type homogeneous Strichartz estimates.
\begin{align*}
	\norm{\cD(t) f}_{L^q(I:B^{s}_{r,2})} \cleq \norm{\jbra{\nabla}^{\gamma-1} f }_{B^{s}_{2,2}} \ceq \norm{f}_{B^{s+\gamma-1}_{2,2}},
\end{align*}
where  $I \subset \R$ is a time interval.
\end{lemma}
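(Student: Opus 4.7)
The plan is to reduce the Besov type estimate to the $L^r$ estimate of Proposition \ref{prop1.1} dyadically, using that $\cD(t)$ is a Fourier multiplier and therefore commutes with every Littlewood--Paley projection $\Delta_j$, $\Delta_{\leq 0}$.

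First I would write out the Besov norm via the definition,
\[
\norm{\cD(t)f}_{B^{s}_{r,2}}
= \norm{\Delta_{\leq 0}\cD(t)f}_{L^r_x}
+ \Bigl\| \bigl\{ 2^{js}\norm{\Delta_j \cD(t)f}_{L^r_x} \bigr\}_{j=1}^{\infty} \Bigr\|_{l^2},
\]
take the $L^q_t$--norm of both sides, and use the commutation $\Delta_j \cD(t) = \cD(t) \Delta_j$. Since $q \geq 2$ under the hypotheses of Proposition \ref{prop1.1}, Minkowski's inequality allows one to exchange $L^q_t$ and $l^2_j$:
\[
\Bigl\| \bigl\{ 2^{js}\norm{\cD(t)\Delta_j f}_{L^r_x} \bigr\}_{j} \Bigr\|_{L^q_t l^2_j}
\leq
\Bigl\| \bigl\{ 2^{js}\norm{\cD(t)\Delta_j f}_{L^q_t L^r_x} \bigr\}_{j} \Bigr\|_{l^2}.
\]

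Next, I would apply Proposition \ref{prop1.1} to each frequency block $\Delta_j f$ and $\Delta_{\leq 0} f$, which yields
\[
\norm{\cD(t)\Delta_j f}_{L^q_t L^r_x}
\cleq \norm{\jbra{\nabla}^{\gamma-1}\Delta_j f}_{L^2_x},
\qquad
\norm{\cD(t)\Delta_{\leq 0} f}_{L^q_t L^r_x}
\cleq \norm{\jbra{\nabla}^{\gamma-1}\Delta_{\leq 0} f}_{L^2_x},
\]
with the same constants as in Proposition \ref{prop1.1}, since those estimates are linear in $f$ and the dyadic cutoffs do not depend on $t$. Using once more that $\jbra{\nabla}^{\gamma-1}$ commutes with the $\Delta_j$, one assembles
\[
\norm{\cD(t)f}_{L^q(I:B^{s}_{r,2})}
\cleq \norm{\jbra{\nabla}^{\gamma-1}f}_{B^{s}_{2,2}}.
\]

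Finally, for the equivalence $\norm{\jbra{\nabla}^{\gamma-1}f}_{B^s_{2,2}} \ceq \norm{f}_{B^{s+\gamma-1}_{2,2}}$, I would use the standard fact that the Bessel potential $\jbra{\nabla}^{\sigma}$ is an isomorphism from $B^{t}_{2,2}$ onto $B^{t-\sigma}_{2,2}$; at the dyadic level, $\jbra{\xi}^{\gamma-1} \sim 2^{j(\gamma-1)}$ on the support of $\chi_{2^j}$ for $j \geq 1$ (and is bounded on the support of $\chi_{\leq 0}$), so the Mikhlin multiplier theorem (or the Littlewood--Paley characterization of $H^{s+\gamma-1} = B^{s+\gamma-1}_{2,2}$) gives the equivalence. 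There is no real obstacle here; the only step requiring attention is the Minkowski exchange, which is clean because the second Besov index is $2$ and $q\ge 2$. The same dyadic scheme, invoking Proposition \ref{prop1.4} in place of Proposition \ref{prop1.1}, handles the inhomogeneous version (Proposition \ref{prop1.6}), so the proof structure is entirely uniform in the two cases.
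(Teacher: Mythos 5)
Your proposal is correct and follows essentially the same route as the paper: expand the Besov norm dyadically, use that $\cD(t)$ commutes with the Littlewood--Paley projections, exchange $L^q_t$ and $l^2_j$ by Minkowski (valid since $q\ge 2$ under the hypotheses of Proposition \ref{prop1.1}), apply the Lebesgue-space homogeneous Strichartz estimate to each block, and reassemble. The paper phrases the Minkowski step as the triangle inequality in $L^{q/2}(I)$ after squaring the $l^2$ norm, but this is the same inequality, and the concluding equivalence $\norm{\jbra{\nabla}^{\gamma-1}f}_{B^{s}_{2,2}}\ceq\norm{f}_{B^{s+\gamma-1}_{2,2}}$ is likewise handled as a standard property of Besov spaces.
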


\begin{proof}
By the definition of Besov space, we have
\begin{align}
\label{eq2.5}
	&\norm{\cD(t) f}_{L^q(I:B^{s}_{r,2})} 
	\\ \notag
	&= \norm{ \norm{\cD(t) \Delta_{\leq 0} f}_{L^r} + \l\{ \sum_{j\geq1} \l( 2^{sj} \norm{\Delta_j \cD(t) f}_{L^2} \r)^2 \r\}^{\frac{1}{2}} }_{L^q(I)}
	\\ \notag
	& \leq  \norm{ \norm{\cD(t) \Delta_{\leq 0} f}_{L^r}  }_{L^q(I)}
	+ \norm{  \l\{ \sum_{j\geq1} \l( 2^{sj} \norm{\Delta_j \cD(t) f}_{L^2} \r)^2 \r\}^{\frac{1}{2}} }_{L^q(I)}.
\end{align}
By the homogeneous Strichartz estimates in Proposition \ref{prop1.1}, the first term can be estimated as
\begin{align}
\label{eq2.6}
	\norm{ \norm{\cD(t) \Delta_{\leq 0} f}_{L^r}  }_{L^q(I)}
	\cleq \norm{ \jbra{\nabla}^{\gamma-1} \Delta_{\leq 0} f}_{L^2} 
	=  \norm{ \Delta_{\leq 0} \jbra{\nabla}^{\gamma-1} f}_{L^2}.
\end{align}
The second term is estimated as
\begin{align}
\label{eq2.7}
	\norm{  \l\{ \sum_{j\geq1} \l( 2^{sj} \norm{\Delta_j \cD(t) f}_{L^2} \r)^2 \r\}^{\frac{1}{2}} }_{L^q(I)}
	&= \norm{   \sum_{j\geq1} \l( 2^{sj} \norm{\Delta_j \cD(t) f}_{L^2} \r)^2  }_{L^{\frac{q}{2}}(I)}^{\frac{1}{2}}
	\\ \notag
	&\leq \sum_{j\geq1}  \norm{  \l( 2^{sj} \norm{\Delta_j \cD(t) f}_{L^2} \r)^2  }_{L^{\frac{q}{2}}(I)}^{\frac{1}{2}}
	\\ \notag
	&=\l\{ \sum_{j\geq1}  \l( 2^{sj} \norm{\Delta_j \cD(t) f}_{L^q(I:L^2)} \r)^2  \r\}^{\frac{1}{2}}
	\\ \notag
	&\cleq  \l\{ \sum_{j\geq1}  \l( 2^{sj} \norm{\Delta_j \jbra{\nabla}^{\gamma-1} f}_{L^2} \r)^2  \r\}^{\frac{1}{2}}.
\end{align}
Combining \eqref{eq2.6} and \eqref{eq2.7} with \eqref{eq2.5}, we get 
\begin{align*}
	\norm{\cD(t) f}_{L^q(I:B^{s}_{r,2})}  
	&\cleq \norm{ \Delta_{\leq 0} \jbra{\nabla}^{\gamma-1} f}_{L^2} +  \l\{ \sum_{j\geq1}  \l( 2^{sj} \norm{\Delta_j \jbra{\nabla}^{\gamma-1} f}_{L^2} \r)^2  \r\}^{\frac{1}{2}}
	\\
	& = \norm{\jbra{\nabla}^{\gamma-1} f }_{B^{s}_{2,2}}. 
\end{align*}
The last equivalency is a fundamental property of Besov spaces. 
\end{proof}

The estimates for $\partial_t \cD(t)$ and $\partial_t^2 \cD(t)$ can be obtained in the same way as above. Thus, we get Proposition \ref{prop1.5}.

\begin{lemma}
Let $s \in \R$. Assume that $(q,r)$ and $(\tilde{q},\tilde{r})$ satisfy the assumptions in Proposition \ref{prop1.4}. Let $\gamma$ and $\tilde{\gamma}$ be as in Proposition \ref{prop1.4} and $\delta$ be defined in Table \ref{tab1}. Then we have the following Besov type inhomogeneous Strichartz estimates.
\begin{align*}
	\norm{ \int_{0}^{t} \cD(t-s) F(s) ds}_{L^q(I:B^{s}_{r,2})} 
	\cleq \norm{\jbra{\nabla}^{\gamma+\tilde{\gamma} + \delta-1} F }_{L^{\tilde{q}'}(I:B^{s}_{\tilde{r}',2})}
\end{align*}
where $I \subset \R$ is a time interval.
\end{lemma}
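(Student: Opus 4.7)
The plan is to follow the same template as the homogeneous Besov-type estimate proved just above, splitting the Besov norm on the left via the Littlewood--Paley decomposition, and then applying Proposition \ref{prop1.4} block by block. Since the projections $\Delta_{\leq 0}$ and $\Delta_j$ commute with $\cD(t-s)$ (both are Fourier multipliers), we have
\begin{align*}
	\Delta_j \int_0^t \cD(t-s) F(s)\,ds = \int_0^t \cD(t-s) \Delta_j F(s)\,ds,
\end{align*}
and similarly for $\Delta_{\leq 0}$. Writing $U(t) := \int_0^t \cD(t-s) F(s)\,ds$, this lets us unfold
\begin{align*}
	\norm{U}_{L^q(I:B^s_{r,2})}
	\leq \norm{ \norm{\Delta_{\leq 0} U(t)}_{L^r_x} }_{L^q_t(I)}
	+ \norm{ \Bigl\{ \sum_{j\geq 1} \bigl( 2^{sj} \norm{\Delta_j U(t)}_{L^r_x}\bigr)^2 \Bigr\}^{1/2} }_{L^q_t(I)}.
\end{align*}

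First I would bound the low-frequency term by applying Proposition \ref{prop1.4} directly to $\Delta_{\leq 0} F$, which yields
\begin{align*}
	\norm{ \norm{\Delta_{\leq 0} U(t)}_{L^r_x} }_{L^q_t(I)}
	\lesssim \norm{ \jbra{\nabla}^{\gamma+\tilde\gamma+\delta-1} \Delta_{\leq 0} F}_{L^{\tilde q'}(I:L^{\tilde r'}_x)},
\end{align*}
which is dominated by the $\Delta_{\leq 0}$-part of $\norm{\jbra{\nabla}^{\gamma+\tilde\gamma+\delta-1}F}_{L^{\tilde q'}(I:B^s_{\tilde r',2})}$. For the high-frequency term, since $q \geq 2$, Minkowski's inequality allows one to interchange the $L^{q/2}_t$ with the $\ell^1_j$ sum (after squaring), producing
\begin{align*}
	\norm{ \Bigl\{ \sum_{j\geq 1} \bigl( 2^{sj} \norm{\Delta_j U}_{L^r_x}\bigr)^2 \Bigr\}^{1/2} }_{L^q_t(I)}
	\leq \Bigl\{ \sum_{j\geq 1} \bigl( 2^{sj} \norm{\Delta_j U}_{L^q_t L^r_x}\bigr)^2 \Bigr\}^{1/2}.
\end{align*}
Then Proposition \ref{prop1.4} applied to each $\Delta_j F$ gives
\begin{align*}
	\norm{\Delta_j U}_{L^q_t L^r_x}
	\lesssim \norm{\jbra{\nabla}^{\gamma+\tilde\gamma+\delta-1} \Delta_j F}_{L^{\tilde q'}(I:L^{\tilde r'}_x)},
\end{align*}
with constants uniform in $j$ (since the implicit constant in Proposition \ref{prop1.4} is translation-invariant in frequency).

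The only remaining step is to re-sum over $j$ on the right: since the assumption gives $\tilde q' \leq 2$, Minkowski's inequality goes in the reverse direction and lets us move the $\ell^2_j$ norm outside the $L^{\tilde q'}_t$ norm, giving
\begin{align*}
	\Bigl\{ \sum_{j\geq 1} \bigl( 2^{sj} \norm{\jbra{\nabla}^{\gamma+\tilde\gamma+\delta-1}\Delta_j F}_{L^{\tilde q'}_t L^{\tilde r'}_x}\bigr)^2 \Bigr\}^{1/2}
	\leq \norm{ \jbra{\nabla}^{\gamma+\tilde\gamma+\delta-1} F }_{L^{\tilde q'}(I: B^s_{\tilde r',2})}.
\end{align*}
Combining the low- and high-frequency estimates yields the claim. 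The only subtle point is the direction of Minkowski at each stage: it works because $q \geq 2$ on the left and $\tilde q' \leq 2$ on the right, both of which are built into the hypotheses of Proposition \ref{prop1.4}. The endpoint case $(q,r)=(2,2(d-1)/(d-3))$ (or its dual on the right) is handled at no extra cost precisely because Proposition \ref{prop1.4} already incorporates Theorem \ref{thm1.3}. The analogous statement with $\partial_t \cD(t-s)$ follows by the identical argument with $\tilde\gamma+\delta-1$ replaced by $\tilde\gamma+\delta$.
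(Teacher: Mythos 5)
Your proposal is correct and follows essentially the same route as the paper's proof: commute the Littlewood--Paley projections with the Duhamel operator, use Minkowski's inequality with $q\geq 2$ to pull the $L^q_t$ norm inside the $\ell^2_j$ sum, apply Proposition \ref{prop1.4} blockwise, and use Minkowski in the reverse direction with $\tilde q'\leq 2$ to reassemble the Besov norm on the right. The only cosmetic difference is that the paper absorbs the low-frequency piece into the same $\ell^2$ sum via the elementary inequality $(a^2)^{1/2}+(\sum_j b_j^2)^{1/2}\leq \sqrt{2}\,(a^2+\sum_j b_j^2)^{1/2}$, whereas you treat it separately by the triangle inequality; both are equivalent.
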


\begin{proof}
For simplicity, we set $I=\int_{0}^{t} \cD(t-s) F(s) ds$ and $G= \jbra{\nabla}^{\gamma+\tilde{\gamma} + \delta-1} F$. 
By the definition of Besov spaces and $(a^2)^{1/2} + (\sum_j b_j^2)^{1/2} \leq \sqrt{2} (a^2+\sum_j b_j^2)^{1/2}$, we have
\begin{align*}
	(\text{L.H.S}) 
	&=\norm{ \norm{\Delta_{\leq0} I }_{L^r} + \l\{  \sum_{j \geq1} \l(  2^{sj} \norm{\Delta_{j} I}_{L^r} \r)^2   \r\}^{1/2}  }_{L^q(t)}
	\\
	&\cleq \norm{  \l\{ \norm{\Delta_{\leq0} I }_{L^r}^2 +   \sum_{j \geq1} \l(  2^{sj} \norm{\Delta_{j} I}_{L^r} \r)^2   \r\}^{1/2}  }_{L^q(t)}.
\end{align*}
Since $q \geq2$, the right hand side is estimated as
\begin{align*}
	(\text{R.H.S}) 
	\cleq  \l\{ \norm{\Delta_{\leq0} I }_{L^qL^r}^2 +   \sum_{j \geq1} \l(  2^{sj} \norm{\Delta_{j} I}_{L^qL^r} \r)^2   \r\}^{1/2}.
\end{align*}
By using the inhomogeneous Strichartz estimates, Proposition \ref{prop1.4}, we get
\begin{align*}
	&\l\{ \norm{\Delta_{\leq0} I }_{L^qL^r}^2 +   \sum_{j \geq1} \l(  2^{sj} \norm{\Delta_{j} I}_{L^qL^r} \r)^2   \r\}^{1/2}
	\\
	&\cleq \l\{ \norm{\Delta_{\leq0} G }_{L^{\tilde{q}'}L^{\tilde{r}'}}^2 +   \sum_{j \geq1} \l(  2^{sj} \norm{\Delta_{j} G}_{L^{\tilde{q}'}L^{\tilde{r}'}} \r)^2   \r\}^{1/2}
	\\
	&=  \l\{ \norm{ \norm{\Delta_{\leq0} G }_{L^{\tilde{r}'}}^2 }_{L^{\frac{\tilde{q}'}{2}(I)}}  +  \sum_{j \geq1} \norm{ \l(  2^{sj} \norm{\Delta_{j} G}_{L^{\tilde{r}'}} \r)^2 }_{L^{\frac{\tilde{q}'}{2}(I)}}   \r\}^{1/2}.
\end{align*}
Since $\tilde{q}' \leq 2$, this right hand side is bounded as
\begin{align*}
	(\text{R.H.S})
	&\cleq  \norm{ \norm{\Delta_{\leq0} G }_{L^{\tilde{r}'}}^2  +  \sum_{j \geq1}  \l(  2^{sj} \norm{\Delta_{j} G}_{L^{\tilde{r}'}} \r)^2 }_{L^{\frac{\tilde{q}'}{2}}(I)}  ^{1/2}
	\\
	&= \norm{ \l\{ \norm{\Delta_{\leq0} G }_{L^{\tilde{r}'}}^2  +  \sum_{j \geq1} \l(  2^{sj} \norm{\Delta_{j} G}_{L^{\tilde{r}'}} \r)^2  \r\}^{1/2} }_{L^{\tilde{q}'}(I)}
	\\
	& \cleq  \norm{ \norm{\Delta_{\leq0} G }_{L^{\tilde{r}'}}  + \l\{ \sum_{j \geq1} \l(  2^{sj} \norm{\Delta_{j} G}_{L^{\tilde{r}'}} \r)^2  \r\}^{1/2} }_{L^{\tilde{q}'}(I)}
	\\
	&= \norm{G}_{L^{\tilde{q}'} (I:B^{s}_{\tilde{r}',2})}.
\end{align*}
This completes the proof. 
\end{proof}

We can estimate $ \int_{0}^{t} \cD(t-s) F(s) ds$ in the same way. Therefore, we get Proposition \ref{prop1.6}. 


\section{Proof of Lemma \ref{lem_41}}
\label{secB}

\begin{proof}[Proof of Lemma \ref{lem_41}]

First, we consider the first equivalence. We show $\gtrsim$. It is true that $\| f \|_{B_{p,q}^{s}} \sim \| f \|_{L^{p}} + \| f \|_{\dot{B}_{p,q}^{s}}$ for $s>0$. By \cite[Lemma 4.1]{Bu13}, we have
\begin{align*}
	 \| f \|_{\dot{B}_{p,q}^{s}}
	 \sim \left\|  \{ 2^{js} \| \Delta_{\geq j} f \|_{L^p}  \}_{j\in \mathbb{Z}} \right\|_{l^{q}}.
\end{align*}
Hence, it follows that
\begin{align*}
	&\| \Delta_{\leq J} f \|_{L^p}  +  \left\|  \{ 2^{js} \| \Delta_{\geq j} f \|_{L^p}  \}_{j=J}^{\infty} \right\|_{l^{q}}
	\\
	&\lesssim 
	\|  \Delta_{\leq 0} f \|_{L^p}
		+\sum_{j=1}^{J} \| \Delta_j f \|_{L^p}
		+  \left\|  \{ 2^{js} \| \Delta_{\geq j} f \|_{L^p}  \}_{j =J}^{\infty} \right\|_{l^{q}}
	\\
	&\lesssim
	\|  \Delta_{\leq 0} f \|_{L^p}
		+ \left\|  \{ 2^{js} \| \Delta_{\geq j} \Delta_{\geq 0} f \|_{L^p}  \}_{j \in \mathbb{Z}} \right\|_{l^{q}} \\
	&\sim 
	\| \Delta_{\leq 0} f \|_{L^{p}} + \| \Delta_{\geq 0} f \|_{\dot{B}_{p,q}^{s}}
	\\
	& \sim
	\| f \|_{B_{p,q}^{s}}.
\end{align*}
We prove $\lesssim$. We have
\begin{align*}
	\| f \|_{B_{p,q}^{s}} 
	&=\| \Delta_{\leq 0} f \|_{L^p} + \left\|  \{ 2^{js} \| \Delta_j f \|_{L^p}  \}_{j=1}^{\infty} \right\|_{l^{q}}
	\\
	& \lesssim_{J}  \| \Delta_{\leq 0} f \|_{L^p}+ \sum_{j=1}^{J-1} \| \Delta_j f \|_{L^p}  + \left\|  \{ 2^{js} \| \Delta_j f \|_{L^p}  \}_{j=J}^{\infty} \right\|_{l^{q}}
	\\
	& \lesssim_{J}  \| \Delta_{\leq J} f \|_{L^p} + \left\|  \{ 2^{js} \| \Delta_j f \|_{L^p}  \}_{j=J}^{\infty} \right\|_{l^{q}}
	\\
	& \lesssim_{J}  \| \Delta_{\leq J} f \|_{L^p} + \left\|  \{ 2^{js} \| \Delta_{\geq j} f \|_{L^p}  \}_{j=J}^{\infty} \right\|_{l^{q}}.
\end{align*}
Next, we show the second equivalence. 
We first prove $\lesssim$. 
\begin{align*}
	\| f \|_{B_{p,q}^{-s}} 
	&= \| \Delta_{\leq 0} f \|_{L^p} + \left\|  \{ 2^{-js} \| \Delta_j f \|_{L^p}  \}_{j=1}^{\infty} \right\|_{l^{q}}
	\\
	&\lesssim \| \Delta_{\leq 0} f \|_{L^p} + \left\|  \{ 2^{-js} \| \Delta_{\leq j} f \|_{L^p}  \}_{j=0}^{\infty} \right\|_{l^{q}}
	\\
	&\lesssim_{J} \| \Delta_{\leq J} f \|_{L^p} + \left\|  \{ 2^{-js} \| \Delta_{\leq j} f \|_{L^p}  \}_{j=J}^{\infty} \right\|_{l^{q}}
\end{align*}
where we use $ \Delta_j =  \Delta_{\leq j} - \Delta_{\leq j-1}$ and the triangle inequality in the first inequality. We show $\gtrsim$. Now, by the Young inequality, we have
\begin{align*}
	\left\|  \{ 2^{-js} \| \Delta_{\leq j} f \|_{L^p}  \}_{j=J}^{\infty} \right\|_{l^{q}}
	&= \left\|  \left\{ 2^{-js} \| \sum_{l\leq j}  \Delta_{l} f \|_{L^p}  \right\}_{j=J}^{\infty} \right\|_{l^{q}}
	\\
	&\lesssim  \left\|  \left\{  \sum_{l\leq j}   2^{-js} \left\|\Delta_{l} f \right\|_{L^p}  \right\}_{j=J}^{\infty} \right\|_{l^{q}}
	\\
	&\lesssim  \left\|  \left\{  \sum_{l\leq j}   2^{-(j-l)s} 2^{-ls} \left\|\Delta_{l} f \right\|_{L^p}  \right\}_{j=J}^{\infty} \right\|_{l^{q}}
	\\
	&\lesssim  \left\| \{   2^{-js} \}_{j=J}^{\infty} \right\|_{l^{1}} 
	\left\|  \{   2^{-js} \left\|\Delta_{j} f \right\|_{L^p}  \}_{j=J}^{\infty} \right\|_{l^{q}}
	\\
	&\lesssim  \left\|  \{   2^{-js} \left\| \Delta_{j} f \right\|_{L^p} \}_{j=1}^{\infty}  \right\|_{l^{q}}.
\end{align*}
Therefore, it follows that 
\begin{align*}
	&\| \Delta_{\leq J} f \|_{L^p}  +  \left\|  \{ 2^{-js} \| \Delta_{\leq j} f \|_{L^p}  \}_{j=J}^{\infty} \right\|_{l^{q}}
	\\
	&\lesssim_{J}
		\| \Delta_{\leq 0} f \|_{L^p} 
		+ \sum_{j=1}^{J} \| \Delta_{j} f \|_{L^p}
		+ \| \{ 2^{-js} \left\|\Delta_{j} f \right\|_{L^p}  \}_{j=1}^{\infty} \|_{l^q}
	\\
	& \lesssim_{J}  \| \Delta_{\leq 0} f \|_{L^p} + \| \{ 2^{-js} \left\|\Delta_{j} f \right\|_{L^p}  \}_{j=1}^{\infty} \|_{l^q}
	\\
	&\lesssim_{J} \| f \|_{B_{p,q}^{-s}}.
\end{align*}
This completes the proof. 
\end{proof}


\section{An estimate of Besov norm for H\"{o}lder continuous functions}
For the proof of unconditional uniqueness (in Section \ref{sec4}),
we prepare the following estimate of Besov norm for H\"{o}lder continuous functions.
The corresponding estimate in homogeneous Besov norms
has already given in \cite[Lemma 2.1]{Bu13}.
\begin{lemma}\label{lem_b}
Let $0<s<1$, $0<\alpha \le 1$ and $1 < p \le \infty$
be given such that 
$1 < p\alpha \le \infty$.
Let $f(z)$ be a H\"{o}lder continuous function of order $\alpha$
satisfying $f(0) = 0$.
Then, we have
\begin{align*}%
	\| f(u) \|_{B^s_{p,\infty}} \lesssim \| u \|_{B^{\frac{s}{\alpha}}_{p\alpha, \infty}}^{\alpha}.
\end{align*}%
\end{lemma}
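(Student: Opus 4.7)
The plan is to reduce the inhomogeneous claim to the homogeneous analogue \cite[Lemma 2.1]{Bu13} combined with a simple $L^p$ bound. Since $s>0$, the inhomogeneous Besov norm splits as $\|f(u)\|_{B^s_{p,\infty}} \lesssim \|\Delta_{\leq 0} f(u)\|_{L^p} + \|f(u)\|_{\dot{B}^s_{p,\infty}}$, and I bound the two pieces separately.

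For the homogeneous piece, I would directly invoke \cite[Lemma 2.1]{Bu13} to obtain $\|f(u)\|_{\dot{B}^s_{p,\infty}} \lesssim \|u\|_{\dot{B}^{s/\alpha}_{p\alpha,\infty}}^{\alpha}$, and then dominate the homogeneous norm of $u$ by the inhomogeneous one. The comparison $\|u\|_{\dot{B}^{s/\alpha}_{p\alpha,\infty}} \lesssim \|u\|_{B^{s/\alpha}_{p\alpha,\infty}}$ is routine from Littlewood--Paley: the high-frequency blocks ($j\geq 1$) agree, while each low-frequency block in the homogeneous decomposition is $L^{p\alpha}$-controlled by $\Delta_{\leq 0} u$, and the weight $2^{js/\alpha}$ remains bounded for $j\leq 0$ since $s/\alpha>0$.

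For the $L^p$ piece, the hypothesis $f(0)=0$ together with Hölder continuity of order $\alpha$ give the pointwise bound $|f(u(x))| \lesssim |u(x)|^{\alpha}$, and then the scaling identity $\||\phi|^{\alpha}\|_{L^p}=\|\phi\|_{L^{p\alpha}}^{\alpha}$ (valid since $p\alpha>1$, with the obvious interpretation when $p\alpha=\infty$) yields $\|\Delta_{\leq 0} f(u)\|_{L^p} \lesssim \|f(u)\|_{L^p} \lesssim \|u\|_{L^{p\alpha}}^{\alpha}$. Writing $u = \Delta_{\leq 0} u + \sum_{j\geq 1} \Delta_j u$ and using $s/\alpha > 0$, the dyadic decomposition gives
\[
\|u\|_{L^{p\alpha}} \leq \|\Delta_{\leq 0} u\|_{L^{p\alpha}} + \sum_{j\geq 1} 2^{-js/\alpha}\bigl(2^{js/\alpha}\|\Delta_j u\|_{L^{p\alpha}}\bigr) \lesssim \|u\|_{B^{s/\alpha}_{p\alpha,\infty}},
\]
since the geometric series $\sum_{j\geq 1} 2^{-js/\alpha}$ converges. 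Combining the two bounds completes the proof.

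The only non-routine point I anticipate is ensuring that \cite[Lemma 2.1]{Bu13} applies verbatim over the full parameter range in play; if it does not, I would reprove the homogeneous estimate directly via the finite-difference characterization $\|g\|_{\dot{B}^s_{p,\infty}} \sim \sup_{h\neq 0}|h|^{-s}\|g(\cdot+h) - g(\cdot)\|_{L^p}$ (valid since $0<s<1$), applied to $g = f(u)$, combined with $|f(u(x+h))-f(u(x))|\lesssim |u(x+h)-u(x)|^{\alpha}$ and the scaling identity above. No other substantive obstacles are expected.
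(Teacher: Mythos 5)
Your proposal is correct, and it reaches the estimate by a mildly different route than the paper. The paper never splits off the homogeneous part: it invokes the finite-difference characterization of the \emph{inhomogeneous} norm from \cite[p.~162]{BeLo76},
$\| u \|_{B^{\sigma}_{q,\infty}} \sim \| u \|_{L^q} + \sup_{h} |h|^{-\sigma} \| u(\cdot+h)-u(\cdot) \|_{L^q}$,
applies the two pointwise bounds $|f(u)| \lesssim |u|^{\alpha}$ and $|f(a)-f(b)| \lesssim |a-b|^{\alpha}$ to both terms simultaneously, and concludes with the scaling identity $\| |\phi|^{\alpha} \|_{L^p} = \| \phi \|_{L^{p\alpha}}^{\alpha}$ — in other words, the paper's proof is precisely your announced fallback, run directly on the inhomogeneous norm so that the $L^p$ term is absorbed automatically. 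Your primary route instead uses the splitting $\| \cdot \|_{B^s_{p,\infty}} \lesssim \| \cdot \|_{L^p} + \| \cdot \|_{\dot{B}^s_{p,\infty}}$ (legitimate since $s>0$), quotes \cite[Lemma~2.1]{Bu13} as a black box for the homogeneous piece, and handles the $L^p$ piece by $\| f(u) \|_{L^p} \lesssim \| u \|_{L^{p\alpha}}^{\alpha}$ together with the convergent geometric sum $\sum_{j\ge 1} 2^{-js/\alpha}$; the embeddings $\dot{B}^{s/\alpha}_{p\alpha,\infty} \cap L^{p\alpha} \supset B^{s/\alpha}_{p\alpha,\infty}$ you need are indeed routine for $s/\alpha>0$. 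What your route buys is economy (reuse of the cited homogeneous lemma); what the paper's buys is self-containedness and a single computation covering both terms. One caveat, shared by both arguments and implicit in the statement: the first-difference characterization of $B^{s/\alpha}_{p\alpha,\infty}$ (equivalently, the hypotheses of \cite[Lemma~2.1]{Bu13}) requires $s/\alpha<1$, i.e.\ $s<\alpha$; this holds in the paper's application, where $s/\alpha=(d-2)/(d-1)$, so nothing is lost, but it is worth stating if you want the lemma in the full range $0<s<1$, $0<\alpha\le 1$.
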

\begin{proof}
We recall the equivalence of Besov norm (see \cite[p.162]{BeLo76})
\begin{align*}%
	\| u \|_{B^s_{p,q}(\mathbb{R}^d)} \sim
		\| u \|_{L^p(\mathbb{R}^d)}
		+ \left( \int_{\mathbb{R}^d}
				\frac{\| u(\cdot + h) - u(\cdot) \|_{L^p(\mathbb{R}^d)}}{|h|^{d+sq}} \,dh \right)^{\frac{1}{q}}
\end{align*}%
for $q < \infty$, and
\begin{align*}%
	\| u \|_{B^s_{p,q}(\mathbb{R}^d)} \sim
		\| u \|_{L^p(\mathbb{R}^d)}
		+ \sup_{h \in \mathbb{R}^d}
				|h|^{-s} \| u(\cdot + h) - u(\cdot) \|_{L^p(\mathbb{R}^d)}
\end{align*}%
for $q = \infty$.
Since $f(z)$ is H\"{o}lder continuous of order $\alpha$ and satisfies $f(0) = 0$,
we deduce
$|f(u)| \lesssim |u|^{\alpha}$
and
$|f(u(x+h)) - f(u(x))| \lesssim | u(x+h) - u(x) |^{\alpha}$.
Therefore, we calculate
\begin{align*}%
	\| f(u) \|_{B^s_{p,\infty}}
	&\lesssim
	\| f(u) \|_{L^p(\mathbb{R}^d)}
	+\sup_{h\in \mathbb{R}^d} \left[ |h|^{-s} \left( \int_{\mathbb{R}^d} |f(u(x+h))-f(u(x))|^p\,dx \right)^{\frac{1}{p}} \right]\\
	&\lesssim
	\| u \|_{L^{p\alpha}(\mathbb{R}^d)}^{\alpha}
	+\sup_{h\in \mathbb{R}^d} \left[ |h|^{-s} 
				\left( \int_{\mathbb{R}^d} |u(x+h)-u(x)|^{p\alpha} \,dx \right)^{\frac{1}{p}} \right]\\
	&\lesssim
	\| u \|_{L^{p\alpha}(\mathbb{R}^d)}^{\alpha}
	+\sup_{h\in \mathbb{R}^d} \left( |h|^{-\frac{s}{\alpha}}
					\| u(\cdot + h) - u(\cdot) \|_{L^{p\alpha}(\mathbb{R}^d)} \right)^{\alpha} \\
	&\lesssim
	\| u \|_{L^{p\alpha}(\mathbb{R}^d)}^{\alpha}
	+ \left( \sup_{h\in \mathbb{R}^d} |h|^{-\frac{s}{\alpha}}
					\| u(\cdot + h) - u(\cdot) \|_{L^{p\alpha}(\mathbb{R}^d)} \right)^{\alpha} \\
	&\lesssim
	\left( \| u \|_{L^{p\alpha}(\mathbb{R}^d)}
	+ \sup_{h\in \mathbb{R}^d} |h|^{-\frac{s}{\alpha}}
					\| u(\cdot + h) - u(\cdot) \|_{L^{p\alpha}(\mathbb{R}^d)} \right)^{\alpha} \\
	&\sim \| u \|_{B^{\frac{s}{\alpha}}_{p\alpha, \infty}(\mathbb{R}^d)},
\end{align*}%
which gives the desired estimate.
\end{proof}

\section{Failure of the endpoint Strichartz estimate when $d=3$}
In this section, we prove that the endpoint Strichartz estimate does not hold for $d=3$.
Namely, it is not true that there exists a constant
$C>0$
such that the estimate
\begin{align}
\label{end_str_3}
	\left\| \mathcal{D}(t) g \right\|_{L^2_t(\mathbb{R}_+ ; L^{\infty}_x(\mathbb{R}^3))}
		\le C \| g \|_{L^2(\mathbb{R}^3)}
\end{align}
holds for all $g \in L^2(\mathbb{R}^3)$.
In what follows we give a proof based on the argument of Klainerman--Machedon \cite{KlMa93}
with the decomposition of the solution given by Nishihara \cite{Nis03}.
To prove the failure of \eqref{end_str_3}, we show the following:
For any $n \in \mathbb{N}$, there exists a nonnegative function $g_n \in C_0^{\infty}(\mathbb{R}^3)$
such that $\| g_n \|_{L^2(\mathbb{R}^3)} = 1$ and
\begin{align}
\label{fail_end_str}
	\int_0^{\infty} | \phi_n (t,t,0,0) |^2 \,dt \ge n,
\end{align}
where $\phi_n (t,x) = \mathcal{D}(t) g_n(x)$.
Let us prove \eqref{fail_end_str} by contradiction.
Suppose that there exist a constant $C>0$
and a nonnegative test function $\varphi= \varphi(t) \in \mathcal{S}(\mathbb{R}) \setminus\{0\}$
such that
for any $g \in C_0^{\infty}(\mathbb{R}^3)$
satisfying $g\ge 0$ and $\| g \|_{L^2(\mathbb{R}^3)} = 1$, the boundedness
\begin{align}
\label{app_d_1}
	\mathcal{J} = \int_0^{\infty} \phi(t,t,0,0) \varphi(t) \,dt < C
\end{align}
holds, where $\phi (t,x) = \mathcal{D}(t) g(x)$.
Here, we recall the decomposition of the solution in $d=3$:
\begin{align*}
	\phi(t,x) = J_0(t) g + e^{-t/2} W(t) g,
\end{align*}
which was given by Nishihara \cite[(1.10)]{Nis03}, where
\begin{align*}
	J_0(t) g (x) &=
		\frac{e^{-t/2}}{8\pi} \int_0^t \int_{S^2}
			I_1\left( \frac{1}{2}\sqrt{t^2 - r^2} \right) \frac{g(x+r\omega)r^2}{\sqrt{t^2 - r^2}} \,d\omega dr, \\
	W(t) g(x) &=
		\frac{t}{4\pi} \int_{S^2} g(x+t\omega) \,d\omega,
\end{align*}
and $I_1(z)$ is the modified Bessel function of first kind defined by
\begin{align*}
	I_1(z) = \sum_{m=0}^{\infty} \frac{1}{m! (m+1)!} \left( \frac{z}{2} \right)^{2m}.
\end{align*} 
In particular, since $I_1(s) \ge 0$ for $s \in \mathbb{R}$, we have
$J_0(t) g \ge 0$ if $g \ge 0$.
Also, from the above expression, it is obvious that $W(t) g \ge 0$ if $g \ge 0$.
Thus, if $g \ge 0$, then we have
$\phi(t,x) \ge e^{-t/2} W(t) g \ge 0$.
Therefore, noting $\varphi \ge 0$, we estimate
\begin{align*}
	\mathcal{J} \ge
		\int_0^{\infty} \frac{t e^{-t/2}}{4\pi} \int_{S^2} g(t+t\omega_1, t \omega_2, t \omega_3) \,d\omega
			\cdot \varphi(t) \,dt.
\end{align*}
We put $\tilde{\varphi}(t) = e^{-t/2} \varphi(t)$
and apply the coordinate transform $t \omega = y$ to obtain
\begin{align*}
	\int_0^{\infty} \frac{t e^{-t/2}}{4\pi} \int_{S^2} g(t+t\omega_1, t \omega_2, t \omega_3) \,d\omega
			\cdot \varphi(t) \,dt
	= \frac{1}{4\pi} \int_{\mathbb{R}^3} \frac{1}{|y|} g(y_1 + |y|, y_2, y_3) \tilde{\varphi} (|y|) \,dy.
\end{align*}
We further change the variable by $z = y+(|y|, 0, 0)$ to deduce
\begin{align*}
	\frac{1}{4\pi} \int_{\mathbb{R}^3} \frac{1}{|y|} g(y_1 + |y|, y_2, y_3) \tilde{\varphi} (|y|) \,dy
	 = \frac{1}{4\pi} \int_{z_1 > 0} \frac{1}{z_1} g(z) \tilde{\varphi}\left( \frac{|z|^2}{2z_1} \right) \,dz.
\end{align*}
Combining this with \eqref{app_d_1}, we conclude that
\begin{align}
\label{app_d_2}
	\frac{1}{4\pi} \int_{z_1 > 0} \frac{1}{z_1} g(z) \tilde{\varphi}\left( \frac{|z|^2}{2z_1} \right) \,dz
		\le \mathcal{J} < C
\end{align}
holds for any
$g \in C_0^{\infty}(\mathbb{R}^3)$ satisfying $g\ge 0$ and $\| g \|_{L^2(\mathbb{R}^3)} = 1$.
Noting $\tilde{\varphi} \ge 0$, we see that
the function
\begin{align*}
	\psi(z) = \frac{1}{z_1} \tilde{\varphi}\left( \frac{|z|^2}{2z_1} \right)
\end{align*}
is also nonnegative on $\mathbb{R}^3_+ = \{ z \in \mathbb{R}^3 ; z_1 > 0 \}$.
Therefore, by \eqref{app_d_2}, we have
\begin{align*}
	\| \psi \|_{L^2(\mathbb{R}^3_+)}
		= \sup\left\{ (\psi, g)_{L^2(\mathbb{R}^3_+)} ;
			g \in C_0^{\infty}(\mathbb{R}^3_+), g \ge 0, \| g \|_{L^2(\mathbb{R}^3_+)} \le 1 \right\}
		\le 4\pi C,
\end{align*}
that is, $\psi \in L^2(\mathbb{R}^3_+)$.
Moreover, we easily compute
\begin{align*}
	\| \psi \|_{L^2(\mathbb{R}^3_+)}^2
		&= \int_{\mathbb{R}^3_+} \frac{1}{z_1^2} \tilde{\varphi} \left( \frac{|z|^2}{2z_1} \right)^2 \,dz \\
		&= \int_{\mathbb{R}^3} \frac{1}{(y_1+|y|)|y|} \tilde{\varphi}(|y|)^2 \,dy \\
		&= 2\pi \int_0^{\infty} \tilde{\varphi}(\lambda)^2 \int_0^{\pi} \frac{\sin \theta}{1+\cos \theta} \,d\theta,
\end{align*}
where we use the changing variable
$z = y+(|y|,0,0)$
and the polar coordinates
$y = (\lambda \cos \theta, \lambda \sin \theta \cos \mu, \lambda \sin \theta \sin \mu)$.
However, the integral of the right-hand side does not converge unless $\tilde{\varphi} \equiv 0$.
This is a contradiction and hence, we have proved the failure of \eqref{app_d_1}.
Therefore, for any $C>0$ and any nonnegative $\varphi \in \mathcal{S}(\mathbb{R}^3)\setminus \{0\}$,
there exists $g \in C_0^{\infty}(\mathbb{R}^3)$ with $g\ge 0$ and $\| g\|_{L^2(\mathbb{R}^3)} = 1$
such that
\begin{align*}
	C &\le \int_0^{\infty} \phi(t,t,0,0) \varphi(t) \,dt \\
		&\le 2 \int_0^{\infty} |\phi(t,t,0,0)|^2 \,dt + 2 \int_0^{\infty} |\varphi(t)|^2 \,dt.
\end{align*}
For each $n \in \mathbb{N}$,
if we choose
$C = 2n + 1$ and
$\varphi \in \mathcal{S}(\mathbb{R}^3)\setminus \{0\}$
satisfying $\varphi \ge 0$ and $\| \varphi \|_{L^2(\mathbb{R}_+)}^2 = 1/2$, 
then we can take $g_n \in C_0^{\infty}(\mathbb{R}^3)$ with $g_n \ge 0$ and $\| g_n \|_{L^2(\mathbb{R}^3)} = 1$
satisfying \eqref{fail_end_str}.

\section*{Acknowledgment}
This work was supported by JSPS KAKENHI Grant Numbers JP16K17625, 18K13444.



\begin{thebibliography}{99}


\bibitem{BeLo76} 
J\"{o}ran Bergh,  J\"{o}rgen L\"{o}fstr\"{o}m, 
\textit{Interpolation spaces. An introduction}, 
Grundlehren der Mathematischen Wissenschaften, No. {\bf 223}, 
Springer-Verlag, Berlin-New York, 1976. 
x+207 pp.


\bibitem{Bu13}
Aynur Bulut, Magdalena Czubak, Dong Li, Nata\v{s}a Pavlovi\'{c}, Xiaoyi Zhang,
\textit{Stability and unconditional uniqueness of solutions for energy critical wave equations in
high dimensions},
Comm. Partial Differential Equations {\bf 38} (2013), 575--607.


\bibitem{HoOg04}
Takafumi Hosono, Takayoshi Ogawa,
\textit{Large time behavior and $L^p$-$L^q$ estimate of solutions of 2-dimensional nonlinear damped wave equations},
J. Differential Equations {\bf 203} (2004), no. 1, 82--118.


\bibitem{IIOW19}
 Masahiro Ikeda, Takahisa Inui, Mamoru Okamoto, Yuta Wakasugi, 
\textit{$L^p$-$L^q$ estimates for the damped wave equation and the critical exponent for the nonlinear problem with slowly decaying data}, 
Commun. Pure Appl. Anal. {\bf 18} (2019), no. 4, 1967--2008. doi: 10.3934/cpaa.2019090

\bibitem{IkWa17p}
Masahiro Ikeda, Yuta Wakasugi,
\textit{Global well-posedness for the semilinear wave equation with time dependent damping in the overdamping case},
preprint, arXiv:1708.08044. 

\bibitem{Inup}
Takahisa Inui,
\textit{The Strichartz estimates for the damped wave equation and the behavior of solutions for the energy critical nonlinear equation},
preprint, arXiv:1903.05887.


\bibitem{KeTa98}
Markus Keel, Terence Tao, 
\textit{Endpoint Strichartz estimates}, 
Amer. J. Math. {\bf 120} (1998), no. 5, 955--980.

\bibitem{KlMa93}
Sergiu Klainerman, Matei Machedon, 
\textit{Space-time estimates for null forms and the local existence theorem}, 
Comm. Pure Appl. Math. {\bf 46} (1993), no. 9, 1221--1268.

\bibitem{KTV14}
Herbert Koch, Daniel Tataru, Monica Vi\c{s}an,
\textit{Dispersive equations and nonlinear waves. Generalized Korteweg-de Vries, nonlinear Schr\"{o}dinger, wave and Schr\"{o}dinger maps}, 
Oberwolfach Seminars, {\bf 45} Birkh\"{a}user/Springer, Basel, 2014. xii+312 pp.


\bibitem{MNO03}
Shuji Machihara, Kenji Nakanishi, Tohru Ozawa, 
\textit{Small global solutions and the nonrelativistic limit for the nonlinear Dirac equation}, 
Rev. Mat. Iberoamericana {\bf 19} (2003), no. 1, 179--194.


\bibitem{Mat76}
Akitaka Matsumura, 
\textit{On the asymptotic behavior of solutions of semi-linear wave equations}, 
Publ. Res. Inst. Math. Sci. {\bf 12} (1976/77), no. 1, 169--189.

\bibitem{Nar05}
Takashi Narazaki, 
\textit{$L^p$-$L^q$ estimates for damped wave equations with odd initial data}, 
Electron. J. Differential Equations 2005, No. 74, 17 pp.

\bibitem{Nis03}
Kenji Nishihara, 
\textit{$L^p$-$L^q$ estimates of solutions to the damped wave equation in 3-dimensional space and their application}, Math. Z. {\bf 244} (2003), no. 3, 631--649.


\bibitem{Pl03}
Fabrice Planchon,
\textit{On uniqueness for semilinear wave equations},
Math.\ Z. {\bf 244} (2003), 587--599.


\bibitem{SaWa17}
Shigehiro Sakata, Yuta Wakasugi, 
\textit{Movement of time-delayed hot spots in Euclidean space},
Math. Z. {\bf 285} (2017), no. 3-4, 1007--1040.



\bibitem{Wat17}
Tomonari Watanabe,
\textit{Strichartz type estimates for the damped wave equation and their application}, 
RIMS K\^{o}ky\^{u}roku Bessatsu, {\bf B63} (2017), 77--101.


\end{thebibliography}
\end{document}